\def\revisionoff{1}
\documentclass{article}
\usepackage{graphicx} \usepackage{xcolor}
\usepackage{tikz}
\usetikzlibrary{calc}

\usepackage[margin=1in]{geometry}
\usepackage{amssymb,amsmath,amsthm}
\usepackage{longtable,booktabs}
\usepackage{seqsplit}
\usepackage{placeins}
\usepackage{subcaption}
\usepackage{needspace}
\usepackage[hidelinks]{hyperref}
\newtheorem{lemma}{Lemma}
\newtheorem{theorem}{Theorem}
\newtheorem{problem}{Problem}
\newtheorem{proposition}{Proposition}
\newtheorem{definition}{Definition}
\newtheorem{claim}{Claim}

\usepackage{algorithm}
\usepackage{algorithmicx}
\usepackage{algpseudocode}
\title{Simultaneous Matrix Multiplication by Tensor Contraction Aggregation}
\author{Edgar Solomonik\thanks{Siebel School of Computing and Data Science,
University of Illinois Urbana-Champaign, USA. \texttt{solomon2@illinois.edu}.}
\and Markus Bl\"aser\thanks{Department of Computer Science, Saarland University,
Saarbr\"ucken, Germany. \texttt{mblaeser@cs.uni-saarland.de}.}}
\date{}

\DeclareMathOperator{\vcr}{vec}

\newcommand{\ivl}[2]{[\![#1,#2]\!]}

\newif\ifshowrevision
\ifdefined\revisionoff
  \showrevisionfalse
\else
  \showrevisiontrue
\fi
\newif\ifrevisionregion
\definecolor{revisionblue}{RGB}{0,70,155}
\newenvironment{revision}
  {\revisionregiontrue\ifshowrevision\color{revisionblue}\captionsetup{font+={color=revisionblue}}\fi}
  {}

\makeatletter
\let\STCfloatboxreset\@floatboxreset
\def\@floatboxreset{\STCfloatboxreset
  \ifshowrevision\ifrevisionregion\color{revisionblue}\fi\fi}
\makeatother
\providecommand{\brankop}{\underline{R}}

\begin{document}

\maketitle

\begin{abstract}
We present an interpretation of the fast matrix multiplication algorithms of Coppersmith and Winograd as aggregations of tensor contractions. This point of view provides a direct connection to older constructions by Pan and to more recent work on bilinear algorithms for symmetric tensor contraction. Further, we provide a simple explicit approach for exact aggregation of tensor contractions, each one yielding an independent matrix-matrix product. These constructions and border rank adaptations thereof do not come close to improving the best known bounds on $\omega$ (the matrix multiplication exponent), but provide explicit moderate-size fast matrix multiplication algorithms that achieve a better exponent than Strassen's algorithm and have potential practical value. These finite constructions improve the multiplication-count bounds of the reference algorithms compared here for selected numbers and dimensions of independent square matrix products.
In particular, using $\langle m,m,m\rangle$ to denote the matrix multiplication tensor for matrices of dimension $m$, we obtain constructions that give bounds such as the border rank bound
\[
\underline R\!\left(\langle n^2,n^2,n^2\rangle^{\oplus9}\right)
\le (n+1)^6+6(n+1)^5,
\]
and the exact rank bound
\[
R\!\left(\langle n^3,n^3,n^3\rangle^{\oplus14}\right)\le (n+1)^9.
\]
We quantify the accuracy of such constructions in finite precision, and provide border-rank constructions that reduce cost with limiting retained fractions of accuracy at least $1/2$, $1/3$, or $1/4$ relative to the classical matrix-multiplication approach under the stated error model.

\end{abstract}

\section{Introduction}

Independent matrix--matrix products are a key subject in the theoretical study of the matrix multiplication exponent, $\omega$.
Sch\"onhage's asymptotic sum theorem~\cite{schonhage1981partial} gives upper bounds on $\omega$ based on constructions for independent matrix--matrix products, and the extensively studied method of Coppersmith and Winograd~\cite{coppersmith1990matrix} provides efficient border-rank constructions for such sets of products.
Exact-rank bounds for balanced restrictions of these tensor powers, and hence for balanced compatible families of independent products, were established by Flavi, Jelisiejew, and Micha{\l}ek~\cite{flavi2025symmetric}.
Their work builds on the group-tensor formulation of Alman and Vassilevska Williams~\cite{alman2018further}.
At the same time, independent matrix--matrix products are in themselves a common computational kernel, frequently arising in dense and sparse linear algebra, for instance when multiplying block-sparse matrices~\cite{borstnik2014sparse}.
In recent years, standard software interfaces and libraries have been developed for independent matrix--matrix multiplications~\cite{abdelfattah2021batched}.
In this paper, we develop a new, more practical point of view on the Coppersmith--Winograd construction by considering aggregation of tensor contractions.

\Needspace{8\baselineskip}
Aggregation of two matrix multiplications is a technique employed by Victor Pan in his matrix multiplication constructions~\cite{pan1978strassen,pan2017neighbourhood}.  
We recall his construction here as a trilinear form identity.
For $A,B,D,U,V,W\in\mathbb{C}^{n\times n}$, define
\begin{align}
S&=\sum_{i,j,h}(a_{ij}+u_{jh})(b_{jh}+v_{hi})(d_{hi}+w_{ij}),\\
p_{ih}&=\sum_j(a_{ij}+u_{jh}),\qquad
q_{ij}=\sum_h(b_{jh}+v_{hi}),\qquad
r_{jh}=\sum_i(d_{hi}+w_{ij}).
\end{align}
Then
\begin{align}
\operatorname{tr}(ABD)+\operatorname{tr}(UVW)
=S
&-\sum_{i,j}a_{ij}q_{ij}w_{ij}
-\sum_{j,h}u_{jh}b_{jh}r_{jh}
-\sum_{h,i}p_{ih}v_{hi}d_{hi}.
\label{eq:intro-pan-pair}
\end{align}
Hence, two independent products of $n\times n$ matrices may be computed using $n^3+3n^2$ bilinear multiplications.
Leveraging border rank, Pan reduces the count further to $n^3+2n^2$~\cite[Eq.~(3.1)]{pan1982implicit}.

In his 1982 work, Pan remarks that his construction computes $AB+BA$ with fewer nonscalar multiplications than his construction for $AB$~\cite[Remark~6.1, p.~33]{pan1982implicit}.
In recent years, a family of algorithms has also been introduced for symmetric tensor contraction~\cite{solomonik2021fast}.
For example, for symmetric $A,B\in\mathbb{R}^{n\times n}$, $C=AB+BA$ may be computed by forming the symmetrized aggregate
\begin{equation}
z_{ijk} = (a_{ij} + a_{ik} + a_{jk})(b_{ij} + b_{ik} + b_{jk}),\qquad i<j<k,
\end{equation}
and applying lower-order corrections, using a total of $n^3/6 + O(n^2)$ bilinear multiplications~\cite{solomonik2021fast}.
A similar sum over subsets works for higher-order tensor contractions~\cite{solomonik2021fast}.
Instead of symmetrization, we consider an analogous aggregation of independent tensor contractions.
In Section~\ref{sec:simultaneous-contraction}, we show that aggregation of independent tensor contractions is exactly the task performed by the filtered powers of the little and big Coppersmith--Winograd tensors.

We give a direct tensor-contraction formulation of exact decompositions for compatible sets of independent matrix--matrix products.
We achieve this using tensor products of a simple centering embedding from $\mathbb C^n$ into the zero-sum subspace of $\mathbb C^{n+1}$.
We further augment the centering embedding with a weighting, yielding a weighted embedding from $\mathbb{C}^{n}$ to $\mathbb{C}^{n+2}$ that eliminates both linearly reducible and replicated cross terms.
For square matrix products, each contraction is a trilinear form corresponding to a balanced tripartition of $3N$ indices, and our approach can simultaneously compute any set of such contractions, so long as no recombination of parts from different selected tripartitions is itself a tripartition.
This combinatorial condition is the same one addressed by Coppersmith and Winograd using Salem--Spencer sets~\cite{coppersmith1990matrix,salemspencer1942}.

Besides the exact-rank constructions for independent square matrix products, we also consider aggregation of contractions corresponding to incomplete and unbalanced tripartitions.
We show that this space includes the big Coppersmith--Winograd tensor construction~\cite{coppersmith1990matrix} (Section~\ref{sec:cw-contractions}).
Further, using a standard border-rank degeneration approach~\cite{bini1980approximate}, we loosen the compatibility conditions on tripartitions, enabling aggregation of a larger number of independent matrix--matrix products.

We quantify the parameter conditioning of these graded constructions, estimating the fraction of working-precision digits retained.
Our bounds quantify the sensitivity of the bilinear algorithms obtained by the border-rank (graded) constructions to perturbations of their factor maps and bilinear products.
These characterizations enable us to target constructions with a specified limiting fraction of retained accuracy, such as half the digits guaranteed by the classical error bound as working precision increases.

With the general aggregation framework established, we then use numerical search to find sets of tripartitions that are compatible for aggregation.
The asymptotic exponential growth rate of classical conflict-free tripartition families is characterized by Coppersmith and Winograd~\cite{coppersmith1990matrix}, while numerical search enables us to find constructions of moderate size.
Our constructions improve on the prior exact and border-rank multiplication-count bounds compared here for certain moderate numbers $p$ of independent matrix--matrix products and matrix dimensions $q$.

In one of the simplest new constructions, we show that 14 matrix multiplications of matrices of dimension $n^3$ can be aggregated and computed with $(n+1)^9$ bilinear multiplications, instead of the naive $14n^9$ (Appendix~\ref{apx:construction-certificates}).
We catalogue and compare the new constructions to prior art in Section~\ref{sec:frontier}.
Figure~\ref{fig:equalsize} shows an overview of these results, comparing selected prior finite-base constructions with the new variants at finite $p$ and $q$.

The largest exact construction we consider computes $p=105$  independent products of dimension $q=n^4$, while graded constructions give $p=125,133,141$ with limiting fractions of retained precision at least $1/2,1/3,1/4$, respectively (Section~\ref{sec:d12-search}).
Table~\ref{tab:intro-speedups} gives multiplication-count reduction factors of the new construction over LITA, Strassen, and classical multiplication at $q=11^4=14641$.
For example, the exact fourteen-product construction improves the padding-free recursive estimate for LITA~\cite{khoruzhii2026lita} at $q=7^3=343$ by a factor of $1.029$ (Section~\ref{sec:plot-models}).

\begin{table}[!htbp]
\centering\small
\begin{tabular}{lrc rrr}
\toprule
Construction & $p$ & retained precision & over LITA & over Strassen & over classical\\
\midrule
Exact & 105 & exact & $1.44\times$ & $2.30\times$ & $14.61\times$\\
Border rank & 125 & $\theta\ge1/2$ & $1.71\times$ & $2.73\times$ & $17.31\times$\\
Border rank & 133 & $\theta\ge1/3$ & $1.83\times$ & $2.91\times$ & $18.48\times$\\
Border rank & 141 & $\theta\ge1/4$ & $1.93\times$ & $3.08\times$ & $19.52\times$\\
\bottomrule
\end{tabular}
\caption{Multiplication-count reduction factors at $q=11^4=14641$, the largest dimension $n^4\le16000$ with integer $n$.
The consecutive reference counts are $pq^{\log_{32}14197}$ (LITA~\cite{khoruzhii2026lita}),
$pq^{\log_2 7}$ (Strassen~\cite{strassen1969gaussian}), and $pq^3$ (classical).
The first two are padding-free recursive models; the third is the classical count.
The precision column lists limiting fractions of classical reference
accuracy from Section~\ref{sec:stability};
\eqref{eq:finite-precision-fraction} gives the comparison at fixed
working precision. All entries are multiplication-count ratios rounded to two decimal places.}
\label{tab:intro-speedups}
\end{table}

\FloatBarrier

\section{Notation}
\label{sec:notation}

We will be interested in characterizing the complexity of (multiple) matrix products and tensor contractions.
We generally refer to $A$ and $B$ as the input matrices/tensors and to $C$ as the output.
Any tensor contraction can be written as a product of (possibly rectangular) matrices by combining modes (dimensions) of the tensors shared between $(A,B)$, $(B,C)$, and $(A,C)$.
We associate matrix multiplication with a trilinear form by pairing its output with an independent matrix of variables $Z$:
\begin{equation}
\sum_{i,j=0}^{n-1}(XY)_{ij}Z_{ij}
=\sum_{i,j,k=0}^{n-1}X_{ik}Y_{kj}Z_{ij}.
\end{equation}
This trilinear form is induced by the same tensor as the bilinear map of matrix multiplication.
The trilinear form is more convenient for deriving low-rank decompositions of the tensor, which can be used to apply the bilinear map efficiently.

For positive integers $a,b,c$, we write
\begin{equation}
\langle a,b,c\rangle
=
\sum_{i=0}^{a-1}\sum_{j=0}^{b-1}\sum_{k=0}^{c-1}
X_{ij}Y_{jk}Z_{ki}
\end{equation}
for the trilinear form representing the matrix multiplication tensor corresponding to the product of an
$a\times b$ matrix and a $b\times c$ matrix. Here $Z$ is indexed in transposed order relative to the entrywise pairing above. We identify a trilinear form with its coefficient tensor in the chosen bases.

A rank-one trilinear form is a
product of one linear form in each variable space. For a trilinear form $T$, we write $R(T)$ for the
least number of rank-one forms whose sum is $T$, and $\underline R(T)$ for the
border rank, the least $r$ for which $T$ is a limit of tensors of rank at most
$r$.
  The direct sum $T_1\oplus\cdots\oplus T_p$ uses disjoint variable spaces;
in particular, $\langle a,b,c\rangle^{\oplus p}$ denotes $p$ independent
matrix products of the same dimensions.

We write multi-indices in bold, for example $\mathbf i=(i_0,\ldots,i_{d-1})$ for a tensor of order $d$.
We use the interval notation $\ivl 0{d-1} = \mathbb{Z} \cap [0, d-1]$.
Throughout the paper, $F=\ivl{0}{d-1}$ denotes the set of positions in a $d$-component multi-index, with $d$ specified for each construction. Thus $r\in F$ labels a position, while $i_r$ is its index value.
All set complements are relative to this coordinate set: $S^c=F\setminus S$.

\begin{definition}[Tensor Contraction]
For $s,t,v\in\mathbb{Z}_{\geq0}$, the contraction of tensors $A\in(\mathbb{C}^n)^{\otimes(s+v)}$ and $B\in(\mathbb{C}^n)^{\otimes(v+t)}$ into a tensor $C\in (\mathbb{C}^n)^{\otimes(s+t)}$ may be written as
\begin{equation}
C_{\mathbf i\mathbf j}
 =\sum_{\mathbf k\in\ivl{0}{n-1}^v}A_{\mathbf i\mathbf k}B_{\mathbf k\mathbf j},
\qquad
\mathbf i\in\ivl{0}{n-1}^s,\quad\mathbf j\in\ivl{0}{n-1}^t.
\end{equation}
The contraction is a bilinear map corresponding to the trilinear form,
\begin{equation}
\mathcal{T}^{s,t,v}_n(X,Y,Z)=\sum_{\mathbf i,\mathbf j,\mathbf k} X_{\mathbf{i} \mathbf{k}} Y_{\mathbf{k} \mathbf{j}} Z_{\mathbf{i} \mathbf{j}}.
\end{equation}
\end{definition}
We use restriction notation to select coordinates of a multi-index.
\begin{definition}[Index restriction]
\label{def:ief}
Let $S=\{j_0<\cdots<j_{k-1}\}\subseteq F$.
For $\mathbf i=(i_0,\ldots,i_{d-1})$, define
\begin{equation}
\mathbf i|_S=(i_{j_0},\ldots,i_{j_{k-1}}).
\end{equation}
The restriction to the empty set is the empty tuple.
\end{definition}

The principal notation used in the constructions and their error analysis is
summarized in Table~\ref{tab:notation-guide}.
\begin{table}[!htbp]
\centering\small
\renewcommand{\arraystretch}{1.12}
\begin{tabular}{@{}p{0.25\linewidth}@{\hspace{0.02\linewidth}}p{0.57\linewidth}@{\hspace{0.02\linewidth}}p{0.14\linewidth}@{}}
\toprule
Notation & Meaning & Defined in\\
\midrule
$F=\ivl{0}{d-1}$ & Coordinate positions in a $d$-component multi-index & \S\ref{sec:notation}\\
$\mathcal S;\ U_t,V_t,W_t$ & Selected tripartitions and the three sets of omitted coordinates for product $t$ & \S\ref{sec:aggregation}\\
$\eta_r(i,j,k)$ & Number of the three selected blocks that omit coordinate $r$ & \S\ref{sec:aggregation}\\
$\mu,\ \nu,\ \Omega_{S,F}$ & Centering; padding; insertion of weighted coordinates $S$ & Defs.~\ref{def:tensor-centering}--\ref{def:weight}\\
$\Phi_{S,F}$ & Combined centering, padding, and weighting embedding & Def.~\ref{def:weight}\\
$\Gamma_S^P$ & Centered lifting on the remaining coordinates $F\setminus P$ & \eqref{eq:centered-lifting}\\
$\mathcal R_{ijk}$ & Coordinates omitted by all three selected blocks & Def.~\ref{def:mixed-form}\\
$\phi_{\mathcal S}$ & Target sum of independent contractions & \eqref{eq:phi-S-labeled}\\
$\Phi_{\mathcal S},\ \Phi_{\mathcal S,\mathcal Q}(\lambda)$ & Aggregate with corrections $\mathcal P(\mathcal S)$; scaled aggregate with selected corrections & \eqref{eq:master-filter}, \eqref{eq:partial-filter}\\
$\mathcal A_{\mathcal S}(P)$ & Aggregate with coordinates $P$ omitted; $P=\emptyset$ gives the main aggregate & Def.~\ref{def:centered-aggregate}\\
$\mathcal P(\mathcal S),\ \mathcal B(\mathcal S)$ & Correction subsets for all replicated terms; unwanted triples remaining after these corrections & Defs.~\ref{def:correction-support}, \ref{def:survivor-set}\\
$\mathcal Q_{\le0}$ & Correction sets required by replicated mixed terms of nonpositive degree & Def.~\ref{def:degree-corrections}\\
$a_t,b_t,c_t,\ h$ & Signed block exponents with desired degree zero; factor-growth exponent after normalizing the smallest surviving degree to one & Defs.~\ref{def:integral-grading}, \ref{def:precision-order}\\
$c_{\mathcal Q}(R),\ \mathcal J$ & Residual multiplier after the corrections; remaining mixed triples & \eqref{eq:partial-correction-coefficient}, Def.~\ref{def:aggregate-index-sets}\\
$\rho_{\mathcal S,\mathcal Q}(n)$ & Multiplication budget of the evaluated aggregates & \eqref{eq:partial-correction-budget}\\
$e_{ijk}$ & Degree $a_i+b_j+c_k$ of a remaining mixed term & Def.~\ref{def:parameter-dependent-map}\\
$\mathcal M,\ \mathcal M_\lambda$ & Target bilinear map and its parameter-dependent approximation & \eqref{eq:target-bilinear-map}, Def.~\ref{def:parameter-dependent-map}\\
$\Xi=(A,B,C),\ \Xi(\lambda)$ & Bilinear algorithm and its parameter-dependent factor matrices & \eqref{eq:cp-evaluation}, \eqref{eq:parameterized-algorithm}\\
$\lambda,\ \epsilon$ & Approximation parameter; relative input and rounding-error bound & \S\ref{sec:approximation-error}, Lem.~\ref{lem:rounded-bilinear}\\
$\beta(\lambda)$ & Approximation-error bound including residual multipliers & Def.~\ref{def:approximation-bound}\\
$\sigma(\Xi),\ \sigma_c$ & Spectral-norm product of a bilinear algorithm; classical reference value & Def.~\ref{def:spectral-product}, Lem.~\ref{lem:classical-spectral-product}\\
$\sigma_0,\ \beta'(0)$ & Leading round-off amplification and approximation coefficients & Def.~\ref{def:leading-sigma}, \eqref{eq:leading-error-balance}\\
$e(\epsilon,x,y),\ e_c(\epsilon,x,y)$ & Relative-error bounds for the chosen parameter rule and the classical algorithm & Def.~\ref{def:retained-accuracy}\\
$\theta(\epsilon,x,y),\ \theta$ & Fraction of classical-reference accuracy and its limit as $\epsilon\to0$ & Def.~\ref{def:retained-accuracy}\\
$\lambda_{\rm bal}(\epsilon)$ & Parameter rule giving $\theta=1/(1+h)$ & Lem.~\ref{lem:retained-accuracy}\\
\bottomrule
\end{tabular}
\caption{Principal notation. The referenced sections and definitions specify each construction; this table is a lookup guide.}
\label{tab:notation-guide}
\end{table}

\section{Motivation: Symmetric Tensor Contractions}
\label{sec:mot}
Our starting point is a family of fast bilinear algorithms for symmetric tensor contractions~\cite{solomonik2021fast}.
These algorithms reduce bilinear rank by forming a fully symmetrized trilinear aggregate over the index space of the contraction and extracting the desired terms from its partial sums.
We use the same treatment of lower-order terms to construct trilinear aggregation algorithms for simultaneous matrix multiplication.

Consider a product between a symmetric matrix $A\in\mathbb{C}^{n\times n}$ and a vector $b\in\mathbb{C}^n$.
The trilinear form corresponding to the matrix vector product $Ab$ is naively a sum of $n^2$ distinct terms, involving $\binom{n+1}{2}$ variables $X_{ij}$ with $j\leq i$, i.e.,
\begin{equation}
T_\text{symv}(X,Y,Z) = \sum_{i=0}^{n-1} \bigg[X_{ii}Y_iZ_i + \bigg(\sum_{j=0}^{i-1} X_{ij}Y_iZ_j + \sum_{j=0}^{i-1} X_{ij}Y_jZ_i\bigg)\bigg].
\end{equation}
The symmetric matrix--vector tensor has rank at most $\binom{n+1}{2}$,
\begin{equation}
T_\text{symv}(X,Y,Z) = \sum_{i=0}^{n-1} \bigg[(X_{ii}-\sum_{j=0}^{i-1}X_{ij}-\sum_{j=i+1}^{n-1} X_{ji})Y_iZ_i  + \sum_{j=0}^{i-1} X_{ij}(Y_i+Y_j)(Z_i+Z_j) \bigg].
\end{equation}

The sum over pairs $(i,j)$ with $j<i$ in this low-rank expansion corresponds to a `symmetrized' form of the tensor contraction, i.e., $X_{ij}(Y_i+Y_j)$ is symmetric in $(i,j)$, while $X_{ij}Y_i$ is not.

The symmetrization produces unwanted terms, all of which have lower order.
More generally, consider a contraction of an order $s+v$ symmetric tensor with a $v+t$ symmetric tensor that is fully symmetrized (i.e., contract $A$ and $B$ to obtain a partially symmetric tensor in the first $s$ and the last $t$ indices, then sum over all permutations of $s+t$ indices into parts of $s$ and $t$ indices to obtain a fully symmetric tensor $C$ of order $s+t$).
When $s=t=v=1$, this is $AB+BA$ with symmetric matrices $A$ and $B$.
For fixed $s,t,v$, let $w=s+t+v$ and use $d=w$ in the coordinate and complement notation. The corresponding trilinear form is
\begin{align}
&\sum_{(U,V,W)\in B_{s,t,v}}\sum_{\mathbf i\in\ivl{0}{n-1}^w}
X_{\mathbf i|_{U^c}}
Y_{\mathbf i|_{V^c}}
Z_{\mathbf i|_{W^c}} \\
B_{s,t,v}&=\{(U,V,W) \mid U\sqcup V \sqcup W = \ivl{0}{w-1}, |U|=t, |V|=s, |W|=v\}. \label{eq:bstv}
\end{align}
For each set of $w$ distinct index values, symmetry leaves ${w\choose s,t,v}$ distinct monomials, one per tripartition of those values. Including repeated-index tuples gives
$\binom{n}{w}{w\choose s,t,v}+O(n^{w-1})=n^w/(s!t!v!)+O(n^{w-1})$
distinct monomials. We obtain a decomposition of rank at most $n^w/w!+O(n^{w-1})$ by first computing the aggregate
\begin{align}
\sum_{\mathbf i\in\ivl{0}{n-1}^w}
\bigg(\sum_{U \subseteq \ivl{0}{w-1}, |U|=t} X_{\mathbf i|_{U^c}}\bigg)
\bigg(\sum_{V \subseteq \ivl{0}{w-1}, |V|=s} Y_{\mathbf i|_{V^c}}\bigg)
\bigg(\sum_{W \subseteq \ivl{0}{w-1}, |W|=v} Z_{\mathbf i|_{W^c}}\bigg).
\end{align}
Its summand is invariant under permutations of $\mathbf i$, so the distinct-index contribution requires only $\binom{n}{w}$ rank-one terms, one for each increasing tuple $i_0<\cdots<i_{w-1}$, with coefficient $w!$. Repeated-index tuples contribute $O(n^{w-1})$ additional terms.
When expanding this trilinear product, terms for which $(U,V,W)$ is not a tripartition of $\ivl{0}{w-1}$ must satisfy at least one of the following two conditions.
\begin{definition}[Replicated Term]
\label{def:rep}
$|U \cap V \cap W| \geq 1$ \\
In this case, some index in $\mathbf{i}$ is omitted in $X$, $Y$, and $Z$, and hence there are at most $n^{w-1}$ distinct monomial terms, each appearing at least $n$ times.
\end{definition}
\begin{definition}[Linearly Reducible Term] $|(U\cap V)\setminus W| \geq 1$ (or similar with $U,V,W$ permuted) \\
\label{def:lr}
In this case, some index in $\mathbf{i}$ appears only in one type of variable ($X$, $Y$, or $Z$). We may sum that tensor of variables over the corresponding index to obtain a trilinear form with $n^{w-1}$ terms (again a tensor contraction with one less index).
\end{definition}
Subtracting these lower-order cross terms gives the stated rank bound.
Further examples and complete equations for computing the cross terms are given in~\cite{solomonik2021fast}.
In the following sections, we review well-known constructions of fast matrix multiplication algorithms, and show that, when written as high-order tensor contractions, they may be similarly understood in terms of replicated and linearly reducible terms.

\section{Background on Fast Matrix Multiplication}
\label{sec:fmm-background}

\subsection{Direct sums of independent matrix--matrix products}
\label{sec:direct-sums}

For bilinear maps with disjoint input and output spaces, the direct sum is the
map that evaluates all of them at once.
Hence the problem of independent matrix multiplications is a direct sum of matrix multiplication tensors.
The elementary upper bounds
\begin{equation}
R(T_1\oplus\cdots\oplus T_p)\leq\sum_{i=1}^p R(T_i),
\qquad
\underline R(T_1\oplus\cdots\oplus T_p)
\leq\sum_{i=1}^p\underline R(T_i)
\end{equation}
come from applying separate algorithms (rank decompositions).  Rank additivity was conjectured by
Strassen, but has been proven false for general tensors \cite{shitov2019directsum}.
Rank additivity of matrix-multiplication remains an open problem. For instance,
$R(\langle2,2,2\rangle^{\oplus2})=14$ already follows from the Alder--Strassen bound~\cite{alder1981associative}: the algebra $\mathbb C^{2\times2}\times\mathbb C^{2\times2}$ has dimension eight and two maximal two-sided ideals, giving the lower bound $2\cdot8-2=14$, while two applications of Strassen's algorithm attain it. See also Rupniewski~\cite{rupniewski2022directsum} for more general direct-sum results.

\subsection{Sch\"onhage's Asymptotic Sum Theorem}

Sch\"onhage's asymptotic sum inequality~\cite{schonhage1981partial}
is the standard route from a border-rank bound for simultaneous matrix
multiplication to a bound on the exponent $\omega$.
Thus trilinear aggregation algorithms for simultaneous matrix multiplication yield bounds on the matrix multiplication exponent.
The asymptotic sum inequality gives much stronger bounds on $\omega$ than using $m$ independent products merely as blocks of
one product of dimension $nm^{1/3}$.  For exact-rank constructions, the inequality also has a direct algorithmic interpretation described next.

Suppose the construction input to the theorem is an exact bilinear construction of rank $r$ that reduces $m$ independent matrix multiplications of dimension $n$ to $r$ scalar products.
At the next recursive level, each scalar product becomes a matrix--matrix product of dimension $n$.
We may compute these $r$ products by applying the same construction $\lceil r/m\rceil$ times, obtaining an exact construction for $m$ independent matrix multiplications of dimension $n^2$ with rank at most $r\lceil r/m\rceil$.
Iterating this recursion gives a concrete sequence of algorithms, with additional recursive levels used as the input dimension increases, and can yield a better effective exponent $\omega$.
Inefficiency from the use of the ceiling may be addressed by first taking a sufficiently high tensor power of the construction.
We now state the asymptotic sum theorem in slightly simplified form.
\begin{theorem}
(Sch\"onhage's Asymptotic Sum Theorem~\cite{schonhage1981partial})
Let $\mathcal{M} = \bigoplus_{i=1}^m \langle n, n, n \rangle$ be a direct sum of $m$ matrix multiplication tensors.
If the border rank is bounded by $\underline{R}(\mathcal{M}) \le r$, then the exponent of matrix multiplication $\omega$ satisfies
\[  mn^\omega \le r. \]
\end{theorem}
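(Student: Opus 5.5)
The plan is the standard argument: pass from border rank to rank of tensor powers, then use the recursive-amplification scheme described just before the statement to obtain fast algorithms for a single large matrix product.

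\emph{Step 1 (tensor powers).} Border rank is submultiplicative under tensor (Kronecker) products, so $\underline R(\mathcal M^{\otimes s})\le r^s$. Since $\langle n,n,n\rangle\otimes\langle n',n',n'\rangle\cong\langle nn',nn',nn'\rangle$ and tensor products distribute over direct sums, we get $\mathcal M^{\otimes s}\cong\langle n^s,n^s,n^s\rangle^{\oplus m^s}$. Hence
\[
\underline R\bigl(\langle n^s,n^s,n^s\rangle^{\oplus m^s}\bigr)\le r^s .
\]

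\emph{Step 2 (border rank to rank).} By Bini's degeneration argument~\cite{bini1980approximate}, a border-rank-$r$ expression of degree $e$ in $\lambda$ yields an exact rank bound $R\le\binom{e+2}{2}r$, via polynomial interpolation in $\lambda$. For the $s$-th power the degree grows only linearly in $s$, so this costs a factor $\mathrm{poly}(s)$, and $R(\langle N,N,N\rangle^{\oplus M})\le \mathrm{poly}(s)\,r^s$ with $N=n^s$, $M=m^s$. This polynomial factor is harmless after taking $s$-th roots.

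\emph{Step 3 (amplification into one product).} This is the main obstacle. We must convert an exact rank-$\rho$ algorithm for $M$ independent $N\times N$ products into a rank bound for a single $\langle N^k,N^k,N^k\rangle$. We follow the recursion sketched in the text, applied to matrices of dimension $N^k$ viewed as $N\times N$ block matrices. At each level the rank-$\rho$ algorithm processes $M$ independent block products at once, producing $\rho$ independent products at the next level. These are grouped $M$ at a time and handled by $\lceil\rho/M\rceil$ further applications. Iterating over $k$ levels and starting from a single product (padded with $M-1$ dummy ones) gives
\[
R(\langle N^k,N^k,N^k\rangle)\le \rho\,\lceil \rho/M\rceil^{k-1}.
\]
Taking $\log$ base $N^k$ and letting $k\to\infty$ gives $\omega\le\log_N\lceil\rho/M\rceil$. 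The ceiling and the worry $\rho<M$ are handled by choosing $s$ large, so that $\rho/M$ is large whenever $r>m$. If $r\le m$ the claim is trivial or contradictory and is treated separately. The ceiling then contributes a factor $1+M/\rho$, which vanishes asymptotically.

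\emph{Step 4 (conclusion).} Combining the steps gives $\omega\le\log_{n^s}\bigl(\mathrm{poly}(s)\,r^s/m^s+1\bigr)$. Letting $s\to\infty$ yields $\omega\le\log_n(r/m)$, that is, $mn^\omega\le r$. Carefully tracking the rounding in Step 3 is where the real work lies. An alternative is to invoke directly the standard fact that the exponent can be defined via $\underline R$ together with $\omega$'s stability under such limits.
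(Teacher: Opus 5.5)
Your argument is correct, and its core (Step 3) is the same block recursion the paper uses in Appendix~\ref{apx:ast}. What differs is what surrounds that recursion.

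The paper's proof treats only an exact rank-$r$ construction. For $m\mid r$ it obtains $R_k=m(r/m)^k$, and for $m\nmid r$ it pads the rank of the $\ell$th tensor power up to $m^\ell\lceil(r/m)^\ell\rceil$. The passage from the border-rank hypothesis in the statement to an exact algorithm is not carried out there; it is deferred to Sch\"onhage and Bl\"aser.

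Your Steps 1--2 supply that missing piece. Taking the $s$th power first keeps the approximation degree at $se$. Extracting the coefficient of $\lambda^{se}$ then costs only a factor $\binom{se+2}{2}$, which vanishes under $s$th roots. The same limit in $s$ also absorbs the ceiling, so you need no separate padding trick. Your route therefore proves the theorem as stated. The paper's route buys a shorter argument that yields an actual exact algorithm without interpolation overhead, which is all the paper's exact constructions need.

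Two small remarks. First, Step 2 presupposes that a topological limit of rank-$r$ tensors (the paper's definition of $\underline R$) admits a polynomial approximation of finite degree $e$ in $\lambda$. This is the standard equivalence of topological and algebraic border rank over $\mathbb C$, and you should cite it. Second, your concerns about $\rho<M$ and $r\le m$ are moot. Flattening gives $\rho\ge MN^2$ and $r\ge \underline R(\mathcal M)\ge mn^2$, so $r>m$ whenever $n\ge2$, and the case $n=1$ is trivial.
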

\begin{proof}
We provide an algorithmic proof for the simplified case where the direct sum consists of $m$ identical terms $\langle n, n, n \rangle$ and the rank bound $r$ is divisible by $m$ in Appendix~\ref{apx:ast}.
For the general proof involving multinomial coefficients and arbitrary dimensions, we refer the reader to~\cite{schonhage1981partial} or the survey by Bl\"aser~\cite{blaser2013fast}.
\end{proof}

\subsection{The Coppersmith-Winograd Tensor}

We first recall the 
tensor constructions of Coppersmith and Winograd~\cite{coppersmith1990matrix}.
\begin{definition}[Little CW tensor]
For $x,y,z\in \mathbb{C}^{n+1}$, the little CW tensor is defined by the trilinear form,
\begin{equation}
\phi_{\mathrm{CW}}(x,y,z) = \sum_{i=1}^n \bigl(x_iy_iz_0 + x_iy_0z_i + x_0y_iz_i\bigr).
\end{equation}
\end{definition}
For $n\ge2$, the little CW tensor has border rank exactly $n+2$~\cite[Section~1.2]{conner2021:cc}. The upper bound, valid for all $n\ge1$, is shown by
\begin{align}
\phi_{\mathrm{CW}}(x,y,z) = \lim_{\lambda \to 0}\bigg[&\lambda^{-2}
\sum_{i=1}^n (x_0+\lambda x_i)(y_0+\lambda y_i)(z_0 + \lambda z_i) \\
-&\lambda^{-3}(x_0+\lambda^2 \sum_{i=1}^nx_i)
(y_0+\lambda^2 \sum_{j=1}^ny_j)
(z_0+\lambda^2 \sum_{k=1}^nz_k)\\
-&(\lambda^{-2}n-\lambda^{-3})x_0y_0z_0
\bigg].
\end{align}
For $n=1$, the little CW tensor instead has border rank two.

\begin{definition}[Big CW tensor]
For $x,y,z\in \mathbb{C}^{n+2}$, the big CW tensor is defined by the trilinear form,
\begin{equation}
\phi_{\mathrm{BCW}}(x,y,z) = \sum_{i=1}^n \bigl(x_iy_iz_0 + x_iy_0z_i + x_0y_iz_i\bigr)
+ x_{0}y_{0}z_{n+1}
+ x_{0}y_{n+1}z_{0}
+ x_{n+1}y_{0}z_{0}.
\end{equation}

\end{definition}

The big CW tensor has border rank exactly $n+2$: its flattening in any variable space has matrix rank $n+2$, giving the lower bound, and the following degeneration gives the upper bound
\begin{align}
\phi_{\mathrm{BCW}}(x,y,z) = \lim_{\lambda \to 0}\bigg[&\lambda^{-2}
\sum_{i=1}^n (x_0+\lambda x_i)(y_0+\lambda y_i)(z_0 + \lambda z_i) \\
-&\lambda^{-3}(x_0+\lambda^2 \sum_{i=1}^nx_i)
(y_0+\lambda^2 \sum_{j=1}^ny_j)
(z_0+\lambda^2 \sum_{k=1}^nz_k)\\
-&(1/\sigma_{\lambda,n})
(x_0- \sigma_{\lambda,n} x_{n+1})
(y_0- \sigma_{\lambda,n} y_{n+1})
(z_0- \sigma_{\lambda,n} z_{n+1})
 \bigg], \\
 \sigma_{\lambda,n} = &\frac{\lambda^3}{n\lambda-1}.
\end{align}

\section{Simultaneous Tensor Contraction}
\label{sec:simultaneous-contraction}
\subsection{The Coppersmith-Winograd Tensor as Aggregated Tensor Contractions}
\label{sec:cw-contractions}

In this section, we show that powers of the little CW and big CW tensors have natural interpretations in terms of tensor contractions associated with tripartitions.
As a bilinear map, we may interpret the little CW tensor as taking inputs 
$a^{(0)}\in\mathbb{C}$, $a^{(1)}\in\mathbb{C}^n$,
$b^{(0)}\in\mathbb{C}$, $b^{(1)}\in\mathbb{C}^n$, and computing 
\begin{equation}
c^{(0)} = (a^{(1)})^Tb^{(1)},\quad c^{(1)} = a^{(0)}b^{(1)} + a^{(1)}b^{(0)}.
\end{equation}
Following notation we used for tripartitions before, we may label the variables in the trilinear forms as 
$X=\{X^{(\{\})}, X^{(\{0\})}\}$,
$Y=\{Y^{(\{\})}, Y^{(\{0\})}\}$,
$Z=\{Z^{(\{\})}, Z^{(\{0\})}\}$, and write the CW tensor as
\begin{align}
\phi_{\mathrm{CW}}(X,Y,Z) = \sum_{i=0}^{n-1} \bigl(
X^{(\{\})}_iY^{(\{\})}_iZ^{(\{0\})}
+X^{(\{\})}_iY^{(\{0\})}Z^{(\{\})}_i
+X^{(\{0\})}Y^{(\{\})}_iZ^{(\{\})}_i\bigr).\label{eq:phicw}
\end{align}
Here, we label each component of  $X$, $Y$, $Z$ by the (possibly empty) subset of indices that is excluded from the variable.
The index space is $\{0\}$, and hence the only two such subsets are $\{\}$ and $\{0\}$.

Recall our definition of tripartitions $B_{s,t,v}$ in \eqref{eq:bstv}, and define all tripartitions as
\begin{equation}
P_w = \bigcup_{\substack{s,t,v\in\mathbb{Z}_{\geq0}\\s+t+v=w}} B_{s,t,v}.
\end{equation}
For higher tensor powers, let $d\ge1$.
We have the following relation (recall Definition~\ref{def:ief}).
\begin{lemma}
\begin{align}
\phi^{\otimes d}_{\mathrm{CW}}(X,Y,Z) = \sum_{(U,V,W)\in P_d}\ \ 
\sum_{\mathbf{i}\in\ivl{0}{n-1}^{d}}
X^{(U)}_{\mathbf i|_{U^c}}
Y^{(V)}_{\mathbf i|_{V^c}}
Z^{(W)}_{\mathbf i|_{W^c}}.\label{eq:cwpower}
\end{align}
\end{lemma}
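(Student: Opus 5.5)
We prove the identity by expanding the $d$-fold tensor power factor by factor and matching labels. Write the single-factor form \eqref{eq:phicw} as a sum over the three tripartitions of the one-element index set $\{0\}$:
\begin{equation}
\phi_{\mathrm{CW}}=\sum_{(U,V,W)\in P_1}\ \sum_{i\in\ivl0{n-1}^{|\{0\}\setminus U|\,\text{or}\,\cdots}} \cdots
\end{equation}
More concretely, the three terms correspond to $(U,V,W)=(\{\},\{\},\{0\})$, $(\{\},\{0\},\{\})$ and $(\{0\},\{\},\{\})$. In each term the variable labelled by $\{0\}$ carries no index, and the two variables labelled $\{\}$ share the summation index $i$. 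This is exactly the $d=1$ case of \eqref{eq:cwpower}, with $\mathbf i|_{\{0\}^c}=(i)$ and $\mathbf i|_{\{0\}}$ dropped, and where $P_1$ consists of exactly these three ordered tripartitions. So the base case is a rereading of \eqref{eq:phicw}.

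For general $d$, the tensor power $\phi_{\mathrm{CW}}^{\otimes d}$ has variable spaces $(\mathbb C^{n+1})^{\otimes d}$ in each slot. A basis variable of $X$ is indexed by a $d$-tuple whose $r$-th entry is either the special index $0$ (the label $\{0\}$) or one of the $n$ ordinary indices. We group these $d$-tuples by the set $U\subseteq F$ of positions carrying the special index, and identify $X^{(U)}_{\mathbf j}$ with the variable whose ordinary entries, read in increasing position order, are $\mathbf j\in\ivl0{n-1}^{|U^c|}$. This identification is consistent with Definition~\ref{def:ief}. It is the step needing care: we must fix this relabeling once for $X$, $Y$ and $Z$, and check that it is a bijection onto the basis of $(\mathbb C^{n+1})^{\otimes d}$, which follows since $\sum_U n^{|U^c|}=(n+1)^d$.

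Next we expand the product $\prod_{r=0}^{d-1}\phi_{\mathrm{CW}}$, taking the $r$-th factor in the $r$-th tensor coordinate. Distributing gives a sum over choices, for each $r$, of one of the three terms, that is, an assignment of position $r$ to exactly one of $U$, $V$, $W$. These choices correspond bijectively to ordered tripartitions $(U,V,W)$ of $F$, which are exactly the elements of $P_d$ as $s,t,v$ range over compositions of $d$. For a fixed choice, coordinate $r$ contributes a summation over $i_r$ shared by the two slots not containing $r$, and the special index in the slot containing $r$. Collecting all coordinates, the monomial is $X^{(U)}_{\mathbf i|_{U^c}}Y^{(V)}_{\mathbf i|_{V^c}}Z^{(W)}_{\mathbf i|_{W^c}}$. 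This monomial is summed over $i_r$ for every $r$: each position lies in the complement of exactly two of the sets, so it indexes two variables. Hence the summation ranges over all of $\mathbf i\in\ivl0{n-1}^d$, which yields \eqref{eq:cwpower}.

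The only real obstacle is bookkeeping. First, we must confirm that the ordered restriction $\mathbf i|_{S^c}$ matches the tensor-product ordering of coordinates. Second, we must confirm that the $s,t,v=0$ cases, where some block is empty, are included. The bijections above handle both, so no cancellation or coefficient issue arises: every term appears with coefficient one.
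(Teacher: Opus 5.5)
Your proof is correct and is essentially the paper's argument. The paper inducts on $d$ using $\phi_{\mathrm{CW}}^{\otimes d}=\phi_{\mathrm{CW}}^{\otimes(d-1)}\otimes\phi_{\mathrm{CW}}$ and the fact that each new coordinate joins exactly one of $U,V,W$; you unroll that induction into a single distributive expansion over all $d$ factors, and you also make explicit the identification of basis variables with blocks $X^{(U)}$, which the paper leaves implicit.

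One small slip in your base case: with $F=\{0\}$, the variable labelled $\{\}$ carries $\mathbf i|_{\{\}^c}=\mathbf i|_{\{0\}}=(i)$, and the variable labelled $\{0\}$ carries $\mathbf i|_{\{0\}^c}=()$, not the other way around. This does not affect your general argument.
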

\begin{proof}
The claim follows by induction on $d$.
$P_d$ may be obtained by adding an item to one of the three parts of each tripartition in $P_{d-1}$.
These are exactly the terms we get in $\phi^{\otimes d}_{\mathrm{CW}}$, since $\phi^{\otimes d}_{\mathrm{CW}}=\phi^{\otimes (d-1)}_{\mathrm{CW}} \otimes \phi_{\mathrm{CW}}$, $\phi^{\otimes (d-1)}_{\mathrm{CW}}$ contains the terms in $P_{d-1}$ by induction, and \eqref{eq:phicw} maps $\phi_{\mathrm{CW}}$ to an item in $U$, $V$, or $W$.
\end{proof}
Coppersmith and Winograd consider a restriction of this tensor by zeroing out blocks of variables $X^{(U)}$, $Y^{(V)}$, $Z^{(W)}$ for some choices of $U$, of $V$, and of $W$.
Ultimately, they obtain a set of independent matrix--matrix products with border rank at most $(n+2)^d$.
This bound follows by tensoring the $(n+2)$-term little CW degeneration $d$ times; zeroing variable blocks cannot increase the resulting number of rank-one terms.
Appendix~\ref{apx:N1br} gives the smaller bound $n^3+6n^2$ for the balanced cube, whose six matrix products share variable blocks.

The big CW tensor similarly gives contractions indexed by incomplete tripartitions. In each tensor-power term, write $F=G\sqcup H$, where the factors on $G$ come from the little CW tensor and those on $H$ from the three additional scalar terms. The two tripartitions $U\sqcup V\sqcup W=G$ and $\bar U\sqcup\bar V\sqcup\bar W=H$ specify these choices.
As a bilinear map, the big CW tensor may be interpreted as taking inputs 
$a^{(0)}, \hat{a}^{(0)}, b^{(0)}, \hat{b}^{(0)} \in \mathbb{C}$ and $a^{(1)}, b^{(1)} \in\mathbb{C}^n$, and computing
\begin{equation}
c^{(0)} = 
a^{(0)}\hat{b}^{(0)} + \hat{a}^{(0)}b^{(0)} + (a^{(1)})^Tb^{(1)}, \quad \hat{c}^{(0)}=a^{(0)}b^{(0)}, \quad c^{(1)} = a^{(0)}b^{(1)} + a^{(1)}b^{(0)}.
\end{equation}
We write $X^{(S,\bar S)}$ for a variable block of order $d-|S|$ that omits coordinates $S$. Within $S$, the subset $\bar S$ selects the additional scalar variable (index $n+1$), and $S\setminus\bar S$ selects the original scalar variable (index $0$). In a tensor-power term, the $X$-block thus has $S=U\cup H$ and $\bar S=\bar U\subseteq H$.
For the big CW tensor, we then have 
$X=\{X^{(\{\},\{\})}, X^{(\{0\},\{\})}, X^{(\{0\},\{0\})}\}$,
$Y=\{Y^{(\{\},\{\})}, Y^{(\{0\},\{\})}, Y^{(\{0\},\{0\})}\}$, and
$Z=\{Z^{(\{\},\{\})}, Z^{(\{0\},\{\})}, Z^{(\{0\},\{0\})}\}$,
with
\begin{align}
\phi_{\mathrm{BCW}}(X,Y,Z) =& 
\phi_{\mathrm{CW}}(
\{X^{(\{\},\{\})}, X^{(\{0\},\{\})}\},
\{Y^{(\{\},\{\})}, Y^{(\{0\},\{\})}\},
\{Z^{(\{\},\{\})}, Z^{(\{0\},\{\})}\}) \\
&+
X^{(\{0\},\{\})}Y^{(\{0\},\{\})}Z^{(\{0\},\{0\})}  \\
&+
X^{(\{0\},\{\})}Y^{(\{0\},\{0\})}Z^{(\{0\},\{\})} \\
&+
X^{(\{0\},\{0\})}Y^{(\{0\},\{\})}Z^{(\{0\},\{\})} .
\end{align}
\begin{lemma}\label{lem:bcw-power}
For $X=\{X^{(S,\bar S)} \vert \bar S\subseteq S\subseteq F\}$, with $X^{(S,\bar S)}\in(\mathbb{C}^{n})^{\otimes(d-|S|)}$
and $Y$, $Z$ defined similarly,
\begin{equation}
\phi_{\mathrm{BCW}}^{\otimes d}(X,Y,Z) = 
\sum_{G \sqcup H = F} \ \sum_{U \sqcup V \sqcup W = G} \ \sum_{\bar{U} \sqcup \bar{V} \sqcup \bar{W} = H} \ 
\sum_{\mathbf{i}\in\ivl{0}{n-1}^{G}}
X^{(U\cup H,\bar{U})}_{\mathbf i|_{G\setminus U}}
Y^{(V\cup H,\bar{V})}_{\mathbf i|_{G\setminus V}}
Z^{(W\cup H,\bar{W})}_{\mathbf i|_{G\setminus W}},
\end{equation}
where $\sqcup$ is disjoint union and we use the shorthand notation,
\begin{equation}
\sum_{A\sqcup B \sqcup C = D} = \sum_{\substack{(A,B,C):\ A \sqcup B \sqcup C = D}}. 
\end{equation}
\end{lemma}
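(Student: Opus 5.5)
The plan is to mirror the induction used for \eqref{eq:cwpower}, but to keep track of which tensor factor contributes which kind of term. The base case is $d=1$ with $F=\{0\}$. There the four choices are:
\begin{itemize}
\item $G=F$, $H=\emptyset$, with one of $U,V,W$ equal to $\{0\}$, which recovers the three $\phi_{\mathrm{CW}}$ summands written in the block labels;
\item $G=\emptyset$, $H=F$, with exactly one of $\bar U,\bar V,\bar W$ equal to $\{0\}$.
\end{itemize}
In the second case the empty sum over $\mathbf i$ is the single term $X^{(\{0\},\bar U)}Y^{(\{0\},\bar V)}Z^{(\{0\},\bar W)}$. This gives the three extra scalar monomials in the displayed block form of $\phi_{\mathrm{BCW}}$. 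It also fixes the identification: index $0$ in a coordinate means that coordinate lies in $S\setminus\bar S$, index $n+1$ means it lies in $\bar S$, and indices $1,\dots,n$ become the free index $i_r\in\ivl0{n-1}$.

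Rather than a bare induction, I would expand directly. The factor $\phi_{\mathrm{BCW}}^{\otimes d}$ is the product over positions $r\in F$ of one of the six monomials of $\phi_{\mathrm{BCW}}$, summed over all choices and over the free indices $i_r$. For a fixed choice function I define:
\begin{itemize}
\item $G$ as the set of positions where a little-CW monomial is chosen, and $H=F\setminus G$;
\item within $G$, $U$ as the positions where the $X$-factor is the scalar $x_0$ (the $x_0y_iz_i$ term), and $V$, $W$ likewise for $Y$ and $Z$;
\item within $H$, $\bar U$ as the positions where $x_{n+1}$ appears, and $\bar V$, $\bar W$ likewise.
\end{itemize}
This gives a bijection between choice functions and tuples $(G,H,U,V,W,\bar U,\bar V,\bar W)$ with $G\sqcup H=F$, $U\sqcup V\sqcup W=G$ and $\bar U\sqcup\bar V\sqcup\bar W=H$, because exactly one of the three variables is "special" at each position.

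Next I read off the $X$-factor. It is the tensor-product coordinate of $x^{\otimes d}$ whose entry at position $r$ is $0$ if $r\in U\cup(H\setminus\bar U)$, $n+1$ if $r\in\bar U$, and the free index $i_r$ if $r\in G\setminus U$. Under the relabelling this is exactly $X^{(U\cup H,\bar U)}_{\mathbf i|_{G\setminus U}}$, and the analogous statements hold for $Y$ and $Z$. The free indices range over $\mathbf i\in\ivl0{n-1}^G$. Each free index $i_r$ with $r\in G$ appears in exactly two of the three factors, namely those not equal to the special one, so summing over $i_r$ matches the single sum $\sum_{\mathbf i}$ in the statement.

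Summing over all choice functions then yields the claimed triple sum, with each term appearing exactly once. Equivalently, the induction step $\phi^{\otimes d}=\phi^{\otimes(d-1)}\otimes\phi$ appends the new coordinate either to $G$ (placing it in one of $U,V,W$) or to $H$ (placing it in one of $\bar U,\bar V,\bar W$).

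The main obstacle is not combinatorial but notational consistency. I must make sure the block $X^{(S,\bar S)}$ is indexed by $\mathbf i|_{F\setminus S}$, with $F\setminus S=G\setminus U$, and that the labelling of $X^{(\{0\},\{\})}$ versus $X^{(\{0\},\{0\})}$ matches the original coordinates $x_0$ and $x_{n+1}$ exactly as in the $d=1$ display. I will also check that no monomial is double counted. Injectivity holds because the triple $(S_X,S_Y,S_Z)$ together with the $\bar S$'s determines $G=F\setminus(S_X\cap S_Y\cap S_Z)$ and each part.
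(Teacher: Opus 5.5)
Your proposal is correct and follows the paper's argument. The paper likewise expands $\phi_{\mathrm{BCW}}^{\otimes d}$ coordinatewise, lets $G$ collect the positions that contribute a little-CW monomial and $H$ those that contribute one of the three added scalar monomials, and reads off the block labels $(U\cup H,\bar U)$, $(V\cup H,\bar V)$, $(W\cup H,\bar W)$, with only the coordinates in $G$ summed. Your explicit bijection with choice functions and the index-$0$ versus index-$(n+1)$ bookkeeping make the paper's brief argument precise. One small slip: the $d=1$ case has six terms ($3+3$), not four.
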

\begin{proof}
Each tensor contraction term in $\phi_{\mathrm{BCW}}^{\otimes d}$ is a tensor product of $|G|\leq d$ terms from $\phi_{\mathrm{CW}}$ (indexed by $G$) and $|H|$ scalar-valued terms introduced in $\phi_{\mathrm{BCW}}$ (indexed by $H$), such as $X^{(\{0\},\{\})}Y^{(\{0\},\{\})}Z^{(\{0\},\{0\})}$.
The latter $|H|$ scalar-valued terms correspond to a tripartition of the indices omitted from all three tensor variables in the term, with the block of variables indexed accordingly.
Only coordinates in $G$ are summed, since those in $H$ are omitted from all three blocks.
\end{proof}

A selection of these contractions is conflict-free if no two share a variable block in any of the three spaces and zeroing all unselected blocks leaves exactly the selected contractions. Indexing the selected contractions by $t$, the restriction gives independent matrix products $\langle n^{|V_t|},n^{|W_t|},n^{|U_t|}\rangle$ and has border rank at most $(n+2)^d$.

\subsection{Aggregation of Tensor Contractions}
\label{sec:aggregation}

We approach the simultaneous matrix products selected from powers of the little CW tensor in a bottom-up manner, by considering trilinear aggregation of tensor contractions.

We define a trilinear aggregation of tensor contractions by a subset of balanced tripartitions $\mathcal{S} \subseteq B_{N,N,N}$.
We are then interested in a factorization of the trilinear form corresponding to $|\mathcal{S}|$ simultaneous matrix--matrix products of matrices of dimension $n^N$, all viewed as different contractions of order $2N$ tensors.
With $d=3N$,
\begin{align}
\phi_\mathcal{S}(X,Y,Z) &=
 \sum_{(U,V,W)\in \mathcal{S}}
\sum_{\mathbf{i}\in\ivl{0}{n-1}^{d}}
X^{(U)}_{\mathbf i|_{U^c}}
Y^{(V)}_{\mathbf i|_{V^c}}
Z^{(W)}_{\mathbf i|_{W^c}}.\label{eq:ctrsim}
\end{align}
For $j\in\{1,2,3\}$, let $\pi_j(\mathcal S)$ denote the projection of $\mathcal S$ onto its $j$th coordinate.
We seek to obtain $\phi_{\mathcal S}$ from the aggregated trilinear form,
\begin{equation}
\sum_{\mathbf{i}\in\ivl{0}{n-1}^{d}}
\bigg(\sum_{U\in\pi_1(\mathcal S)}X^{(U)}_{\mathbf i|_{U^c}}\bigg)
\bigg(\sum_{V\in\pi_2(\mathcal S)}
Y^{(V)}_{\mathbf i|_{V^c}}\bigg)
\bigg(\sum_{W\in\pi_3(\mathcal S)}
Z^{(W)}_{\mathbf i|_{W^c}}\bigg).\label{eq:ctragg}
\end{equation}

Index the tripartitions in $\mathcal S$ by $t$, writing $(U_t,V_t,W_t)$.

A mixed choice $(U_i,V_j,W_k)$ selects one factor from each of the three
sums in \eqref{eq:ctragg}.  For each coordinate $r\in F$, its local multiplicity
$\eta_r(i,j,k)\in\{0,1,2,3\}$ is the number of sets among $U_i,V_j,W_k$
that contain $r$.
Since $U_i,V_j,W_k$ specify omitted coordinates, the index in coordinate
$r$ occurs in exactly $3-\eta_r$ of the selected tensors.  When it occurs
in two tensors, summation over that index contracts the corresponding
pair of fibers, as in the trilinear form for matrix multiplication.
When it occurs in all three tensors, the summation is over their
entrywise (Hadamard) product.  An index occurring in only one tensor
can be summed within that tensor, while an index absent from all three
simply replicates the same value.  We use the following names for these
four cases:

\[
\begin{array}{c|c|c}
\eta_r & \text{number of tensors containing the index} & \text{index type}\\ \hline
0 & 3 & \text{Hadamard index}\\
1 & 2 & \text{pairwise contraction index}\\
2 & 1 & \text{linearly reducible index}\\
3 & 0 & \text{replicated index}
\end{array}
\]

For a balanced mixed choice, $\sum_{r\in F}\eta_r(i,j,k)=|U_i|+|V_j|+|W_k|=d$.
Thus, in the absence of linearly reducible and replicated coordinates, every $\eta_r$ is one and the selected sets form a tripartition.

\begin{definition}[Compatible combination and conflict-free family]
\label{def:compat}
Let $\mathcal S$ be a family of tripartitions of the same coordinate set $F$.
A mixed choice $(U_i,V_j,W_k)$ is \emph{compatible} if
\begin{equation}
U_i\sqcup V_j\sqcup W_k=F.
\end{equation}
The family $\mathcal S$ is \emph{conflict-free} if every compatible choice is
diagonal, i.e., compatibility implies $i=j=k$.
\end{definition}

\begin{figure}
\centering
\includegraphics[width=\textwidth]{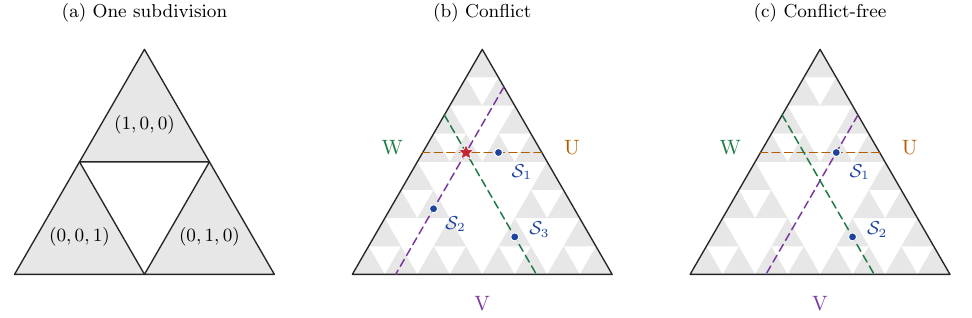}
\caption{Sierpiński’s triangle, obtained by repeatedly removing central subtriangles, visualizes conflicts between tripartitions. (a) One subdivision, labelled by membership in \(U,V,W\). Blue points mark selected tripartitions; mixed projections create an additional contraction in (b) (star), but not in (c).}
\label{fig:sierp}
\end{figure}

Figure~\ref{fig:sierp} visualizes tripartitions and their conflicts using Sierpiński's triangle.
In panel (b), mixing parts from the selected tripartitions produces the
additional starred tripartition; panel (c) illustrates a conflict-free selection.

\begin{problem}[Cross-term elimination]
\label{prob:cte}
For $N\ge1$, determine
\[
M_N=\max\left\{|\mathcal S|:\mathcal S\subseteq B_{N,N,N}
\text{ is conflict-free}\right\}.
\]
\end{problem}

\begin{lemma}
\label{lem:conflict-injective}
If $\mathcal S$ is conflict-free, then each of the three projections
$t\mapsto U_t$, $t\mapsto V_t$, and $t\mapsto W_t$ is injective.
\end{lemma}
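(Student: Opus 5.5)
We argue by contraposition, separately for each of the three projections; the three cases are symmetric under permuting the roles of $X,Y,Z$, so we treat the first in detail. Suppose $U_s=U_t$ for some indices $s\neq t$ of $\mathcal S$. We exhibit a compatible mixed choice that is not diagonal, which contradicts the conflict-free hypothesis of Definition~\ref{def:compat}.

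Take the mixed choice $(U_s,V_t,W_t)$. Since $(U_t,V_t,W_t)$ is a tripartition of $F$ and $U_s=U_t$, we have
\[
U_s\sqcup V_t\sqcup W_t=U_t\sqcup V_t\sqcup W_t=F,
\]
so this choice is compatible. Its index triple is $(i,j,k)=(s,t,t)$, and since $s\neq t$ it is not of the form $i=j=k$. Thus $\mathcal S$ is not conflict-free. Hence conflict-freeness forces $t\mapsto U_t$ to be injective.

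For the second projection, if $V_s=V_t$ with $s\neq t$, use $(U_t,V_s,W_t)$. For the third, if $W_s=W_t$, use $(U_t,V_t,W_s)$. In each case the same one-line check shows the choice is compatible and non-diagonal.

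There is no real obstacle here. The only point needing care is that the tripartitions in $\mathcal S$ are indexed by $t$ as distinct elements, so that $s\neq t$ is a genuine distinctness of indices. Distinct tripartitions sharing one part are exactly what produce the non-diagonal compatible choice.
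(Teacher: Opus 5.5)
Your proposal is correct and follows essentially the same argument as the paper. The paper also takes $U_i=U_j$ with $i\ne j$, observes that $(U_i,V_j,W_j)$ is a non-diagonal compatible choice, and notes that the $V$ and $W$ cases are identical.
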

\begin{proof}
If $U_i=U_j$ for $i\ne j$, then $(U_i,V_j,W_j)$ is a non-diagonal compatible
choice.  The arguments for $V$ and $W$ are identical.
\end{proof}

Consequently, a balanced conflict-free family has only replicated or linearly reducible cross terms.  The following subsections develop embeddings that annihilate these
terms by expanding the local dimension from $n$ to $n+2$.
Section~\ref{sec:compat-color} later
shows how replicated terms can instead be cancelled by lower-order aggregates
while using only dimension $n+1$.

For a conflict-free family $\mathcal S\subseteq B_{N,N,N}$, the parts $U_t$ are distinct, so $|\mathcal S|\le\binom{3N}{N}$.
Together with the Salem--Spencer construction used by Coppersmith and Winograd~\cite{salemspencer1942,coppersmith1990matrix}, this upper bound gives $M_N=\binom{3N}{N}^{1-o(1)}$ as $N\to\infty$.

\subsection{Centering}
\label{sec:centering}

We introduce a modewise zero-sum lifting and call it \emph{centering}.
It automatically ensures that linearly reducible terms vanish.

In particular, we embed tensors $X^{(U)}, Y^{(V)}, Z^{(W)} \in (\mathbb{C}^{n})^{\otimes 2N}$ into tensors $\bar{X}^{(U)}, \bar{Y}^{(V)}, \bar{Z}^{(W)}\in(\mathbb{C}^{n+1})^{\otimes 2N}$ such that the sum, equivalently the average, along every fiber is zero.
We achieve this embedding by a tensor product of simple embeddings from $\mathbb{C}^{n}$ to $\mathbb{C}^{n+1}$.
Let $1_m=(1,\ldots,1)^T\in\mathbb R^m$ denote the all-ones vector.

\begin{definition}[Centering embedding]
A centering embedding is a real matrix
\[
Q\in\mathbb R^{(n+1)\times n}
\]
whose columns form an orthonormal basis for the hyperplane
\begin{equation}
1_{n+1}^{\perp}=\{x\in\mathbb R^{n+1}:1_{n+1}^Tx=0\}.
\end{equation}
Equivalently, the matrix
\[
\begin{bmatrix}
\frac{1}{\sqrt{n+1}}1_{n+1} & Q
\end{bmatrix}
\in\mathbb R^{(n+1)\times(n+1)}
\]
is orthogonal.

Thus
\begin{equation}
1_{n+1}^TQ=0,\qquad Q^TQ=I_n.
\end{equation}
We use the same matrix \(Q\) as a complex-linear map
\(\mathbb C^n\to\mathbb C^{n+1}\).  Throughout, the tensor contractions are bilinear algebraic contractions, so no complex conjugation is used.
\end{definition}

A simple choice for a centering embedding is to choose $Q$ based on a transposed Helmert matrix~\cite{lancaster1965helmert},
\begin{equation}
\forall k\in \ivl 0n, l\in \ivl 0{n-1}, \quad q_{kl} = \begin{cases}
\frac{1}{\sqrt{(l+1)(l+2)}} & k\leq l,\\
\displaystyle -\frac{l+1}{\sqrt{(l+1)(l+2)}} & k=l+1,\\
0 & k> l+1.
\end{cases}
\end{equation}
The Helmert matrix is computationally convenient because, apart from a subdiagonal part, each column is a scalar multiple of the corresponding column of the transposed prefix sum matrix, and hence it can be applied to a vector with $O(n)$ operations.

We can recover tensor contractions after applying the centering embedding.
For example, for any matrices \(A,B\in \mathbb C^{n\times n}\), embed them as
\begin{equation}
\bar A=QAQ^T,\qquad \bar B=QBQ^T.
\end{equation}
Then
\begin{equation}
\bar A\bar B=QABQ^T,
\end{equation}
so, after computing \(\bar C=\bar A\bar B\), we recover
\begin{equation}
C=Q^T\bar C Q.
\end{equation}
As an example of how the embedding may be useful, recall the symmetric matrix--vector product algorithm \(c=Ab\) from Section~\ref{sec:mot}.  We may obtain an algorithm with \(\binom{n+2}{2}\) products by computing
\begin{equation}
\bar A=QAQ^T,\qquad \bar b=Qb,
\end{equation}
then
\begin{equation}
\bar c_i=\sum_j \bar A_{ij}(\bar b_i+\bar b_j),
\end{equation}
and finally recovering
\begin{equation}
c=Q^T\bar c.
\end{equation}
Since $\sum_j\bar A_{ij}=0$, the unwanted term $\bar b_i\sum_j\bar A_{ij}$ vanishes without an explicit correction. This example illustrates how the embedding automates cross-term elimination. As we also see later for simultaneous tensor contraction, computing the cross-term corrections explicitly can instead give a smaller multiplication count.

To apply centering to higher order tensors, we apply the linear embedding given by $Q$ along each mode of the tensor.
The resulting tensor has zero sum along every fiber.

\begin{definition}[Centering map and its tensor products]
\label{def:tensor-centering}
For a centering matrix $Q\in\mathbb R^{(n+1)\times n}$, define
\begin{equation}
\mu:\mathbb C^n\to\mathbb C^{n+1},\qquad \mu(x)=Qx.
\end{equation}
For a tensor whose modes are indexed by a finite set $S$, centering in
all modes is the map
\begin{equation}
\mu^{\otimes |S|}=Q^{\otimes |S|}:
(\mathbb C^n)^{\otimes |S|}\to(\mathbb C^{n+1})^{\otimes |S|}.
\end{equation}
The input and output modes are indexed by $S$, in increasing order. In index notation,
\begin{equation}
(\mu^{\otimes |S|}(T))_{\mathbf j}
=\sum_{\mathbf a\in\ivl{0}{n-1}^{S}}
T_{\mathbf a}\prod_{r\in S}Q_{j_r a_r},
\qquad \mathbf j\in\ivl{0}{n}^{S}.
\end{equation}
\end{definition}
The tensor centering embedding preserves tensor contraction trilinear forms.
\begin{lemma}\label{lem:center_tri}
For any $s,t,v\in\mathbb{Z}_{\geq0}$, let
\[
X\in(\mathbb{C}^{n})^{\otimes(s+v)},\qquad
Y\in(\mathbb{C}^{n})^{\otimes(v+t)},\qquad
Z\in(\mathbb{C}^{n})^{\otimes(s+t)}.
\]
Let $\bar X=\mu^{\otimes(s+v)}(X)$,
$\bar Y=\mu^{\otimes(v+t)}(Y)$, and
$\bar Z=\mu^{\otimes(s+t)}(Z)$. Then
\[
\mathcal{T}^{s,t,v}_n(X,Y,Z)
=
\mathcal{T}^{s,t,v}_{n+1}(\bar{X},\bar{Y},\bar{Z}).
\]
\end{lemma}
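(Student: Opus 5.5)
The plan is a direct index computation that uses only the orthonormality relation $Q^TQ=I_n$. I will write the trilinear form $\mathcal{T}^{s,t,v}_{n+1}(\bar X,\bar Y,\bar Z)$ in index notation, with multi-indices $\mathbf i\in\ivl0n^s$, $\mathbf j\in\ivl0n^t$, $\mathbf k\in\ivl0n^v$. Then I will substitute the definition of $\mu^{\otimes|S|}$ from Definition~\ref{def:tensor-centering} for each of the three tensors. For example,
\[
\bar X_{\mathbf i\mathbf k}=\sum_{\mathbf a,\mathbf c}X_{\mathbf a\mathbf c}\prod_{r}Q_{i_r a_r}\prod_{r}Q_{k_r c_r}.
\]
Here $\mathbf a$ and $\mathbf c$ range over $\ivl0{n-1}^s$ and $\ivl0{n-1}^v$, and I introduce analogous original-index variables for $\bar Y$ and $\bar Z$.

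After exchanging the finite sums, each mode of the contraction is shared by exactly two of the three tensors. The outer sum over that mode's index in $\ivl0n$ therefore acts on a product of exactly two entries of $Q$. For instance, a contracted mode $k_r$ contributes
\[
\sum_{k_r=0}^{n}Q_{k_r c_r}Q_{k_r c'_r}=(Q^TQ)_{c_r c'_r}=\delta_{c_r c'_r}.
\]
The same holds for each mode of $\mathbf i$, shared by $\bar X$ and $\bar Z$, and for each mode of $\mathbf j$, shared by $\bar Y$ and $\bar Z$. The sum factorizes over the $s+t+v$ modes, so every outer sum collapses to a Kronecker delta. This identifies the two original-index variables attached to that mode, and what remains is exactly $\sum_{\mathbf a,\mathbf b,\mathbf c}X_{\mathbf a\mathbf c}Y_{\mathbf c\mathbf b}Z_{\mathbf a\mathbf b}=\mathcal{T}^{s,t,v}_n(X,Y,Z)$.

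An equivalent matrix formulation is cleaner. I would flatten $\bar X=(Q^{\otimes s})X(Q^{\otimes v})^T$, and similarly for $\bar Y$ and $\bar Z$. I would then use $(Q^{\otimes m})^TQ^{\otimes m}=(Q^TQ)^{\otimes m}=I$ and write the form as a trace, $\operatorname{tr}(\bar X\bar Y\bar Z^T)$ with the appropriate transposition. Cyclicity of the trace then cancels each pair $Q^{\otimes m}$, $(Q^{\otimes m})^T$, which gives the identity in the same way as the example $\bar A\bar B=QABQ^T$ above.

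I do not expect a real obstacle. The only care needed is bookkeeping. First, the orientation of the flattenings must place the two copies of $Q$ for each mode adjacent to each other as $Q^TQ$, never as $QQ^T$, which is only a projector. Second, the degenerate cases where some of $s,t,v$ vanish must be covered; there the corresponding tensor product is the empty identity. Notably, the zero-sum property $1^TQ=0$ is not needed for this lemma, only $Q^TQ=I_n$.
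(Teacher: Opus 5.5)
Your proposal is correct and follows the paper's proof. Both substitute the definition of $\mu^{\otimes|S|}$, note that each contracted mode is shared by exactly two of the three tensors, and collapse each outer sum through $\sum_{\ell=0}^{n}Q_{\ell a}Q_{\ell b}=\delta_{ab}$, that is, $Q^TQ=I_n$. Your trace reformulation and your remark that $1_{n+1}^TQ=0$ is not needed are both valid.
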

\begin{proof}
By Definition~\ref{def:tensor-centering},
\begin{align}
\bar{X}_{\mathbf{l}} = \sum_{\mathbf{k} \in \ivl{0}{n-1}^{s+v}} X_{\mathbf{k}} \prod_{m=0}^{s+v-1} Q_{l_m k_m},
\end{align}
and similar for $\bar{Y}$, $\bar{Z}$, so
\begin{align}
\mathcal{T}^{s,t,v}_{n+1}(\bar{X},\bar{Y},\bar{Z}) &= 
\sum_{\boldsymbol{\alpha} \boldsymbol{\beta} \mathbf{j} \mathbf{k}} \prod_{m=0}^{s-1}\underbrace{\bigg(\sum_{\ell=0}^{n} Q_{\ell\alpha_m}Q_{\ell\beta_m}\bigg)}_{\delta_{\alpha_m\beta_m}}X_{\boldsymbol{\alpha} \mathbf{k}} Y_{\mathbf{k} \mathbf{j}} Z_{\boldsymbol{\beta} \mathbf{j}},
\end{align}
where the analogous $Q^TQ=I_n$ identities have already been applied over the $j$ and $k$ indices. Applying the remaining identities over the $\alpha$ and $\beta$ indices gives the stated equality.
\end{proof}

\begin{lemma}
If a term in a trilinear form satisfies the linear reducibility condition (Definition~\ref{def:lr}), it vanishes when evaluated on centered tensors.
Specifically, if an index $f$ appears in exactly one variable, the term is zero for centered $X$, $Y$, and $Z$.
\end{lemma}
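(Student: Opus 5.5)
The plan is to write the term as a sum over the offending index $f$, factor out the only variable that depends on it, and use the zero fiber-sum property of centered tensors. Let $\bar X=\mu^{\otimes(\cdot)}(X)$, and similarly $\bar Y$, $\bar Z$, be centered tensors. Take a term of the form
\[
\sum_{\mathbf i}\bar X_{\mathbf i|_{U^c}}\bar Y_{\mathbf i|_{V^c}}\bar Z_{\mathbf i|_{W^c}},
\]
with summation index ranging over $\ivl{0}{n}$ in each coordinate. Suppose some coordinate $f$ lies in exactly two of $U,V,W$; this is the linear reducibility condition of Definition~\ref{def:lr}. By symmetry in the roles of the three variable spaces, assume $f\in V\cap W$ and $f\notin U$, so index $i_f$ appears only in $\bar X$.

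Since $\bar Y$ and $\bar Z$ do not depend on $i_f$, summing over $i_f$ first lets us pull them out:
\[
\sum_{\mathbf i|_{F\setminus\{f\}}}\bar Y_{\mathbf i|_{V^c}}\bar Z_{\mathbf i|_{W^c}}\Bigl(\sum_{i_f=0}^{n}\bar X_{\mathbf i|_{U^c}}\Bigr).
\]
It therefore suffices to show that the inner sum, a fiber sum of $\bar X$ along the mode corresponding to $f$, vanishes.

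To show this, I will use Definition~\ref{def:tensor-centering}. Write $\bar X_{\mathbf j}=\sum_{\mathbf a}X_{\mathbf a}\prod_{r}Q_{j_r a_r}$ and sum over $j_f$. Only the factor $Q_{j_f a_f}$ depends on $j_f$, and $\sum_{j_f}Q_{j_f a_f}=(1_{n+1}^TQ)_{a_f}=0$, so every summand vanishes. This is the ``zero sum along every fiber'' property stated after the definition, and the whole term is zero.

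I expect no real obstacle; the argument is a one-line interchange of finite sums. The only care needed is bookkeeping: identifying the mode of $\bar X$ that carries coordinate $f$ under the increasing-order convention for $\mathbf i|_{U^c}$. I also need to note that the argument does not use balance or the tripartition structure. Because of that, it applies verbatim to any mixed choice in the aggregate \eqref{eq:ctragg}, and to terms where further coordinates are replicated or Hadamard.
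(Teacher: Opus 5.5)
Your proposal is correct and follows essentially the same route as the paper: factor the sum over the index $f$ that appears in only one tensor out of the other two factors, and use that the fiber (marginal) sum of a centered tensor along that mode vanishes. You additionally derive the zero fiber-sum property from $1_{n+1}^TQ=0$, whereas the paper takes it as given from the definition of centering.
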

\begin{proof}
Assume the index $f$ appears only in the tensor $X$.
We can factor the summation over $f$ from the rest of the trilinear form,
\begin{align}
\sum_{\text{all indices}} (\dots) &= \sum_{\text{indices} \setminus \{f\}} \left( \sum_{f} X_{\dots f \dots} \right) Y_{\dots} Z_{\dots}.
\end{align}
The inner sum is the marginal sum of $X$ along mode $f$.
Since $X$ is a centered tensor, this marginal sum is zero.
Therefore, the entire term vanishes.
\end{proof}

\subsection{Padding and Weighting}
\label{sec:padding-weighting}

We now handle the replicated cross terms by an embedding into $\mathbb{C}^{n+2}$.
Beyond centering, this embedding simply pads the tensor with zeros, which are used in the trilinear form to annihilate the replicated terms via a suitable choice of weighting in the trilinear form.
\begin{definition}[Padding]
\label{def:padding}
Define the padding map $\nu:\mathbb C^{n+1}\to\mathbb C^{n+2}$ by
\begin{align}
\nu(x)&=\begin{bmatrix}x\\0\end{bmatrix}.
\end{align}
Thus $\nu\circ\mu$ is the padded centering embedding.
\end{definition}
The padded centering embedding becomes interesting when combined with a weighting.
\begin{definition}[Weight vector]
\label{def:weight-vector}
Define $w\in\mathbb{C}^{n+2}$ by
\begin{equation}
w_i = \begin{cases} 1 & \text{if } 0 \le i \le n \\ -(n+1)^{1/3} & \text{if } i = n+1 \end{cases}. 
\end{equation}
\end{definition}
The efficacy of the weighting follows from the following claim.
\begin{claim}\label{cla:emb}
The weight vector $w$ and padded centering embedding $\nu\circ\mu$ satisfy the following properties.
For any $x, y \in \mathbb{C}^n$,
\begin{enumerate}
    \item metric preservation: $\sum_{i} w_i \nu(\mu(x))_i \nu(\mu(y))_i = x^Ty$,
    \item linear reducibility annihilation: $\sum_{i} w_i^2 \nu(\mu(x))_i = 0$,
    \item replication annihilation: $\sum_{i} w_i^3 = 0$.
\end{enumerate}
\end{claim}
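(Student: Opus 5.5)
The plan is to verify each of the three identities directly from the definitions. The key observation is that the padded coordinate $n+1$ of $\nu(\mu(x))$ is zero, so any sum weighted by powers of $w$ against a padded vector only sees the first $n+1$ coordinates. On those coordinates $w_i=1$, which reduces the first two properties to the defining identities $Q^TQ=I_n$ and $1_{n+1}^TQ=0$ of the centering embedding. The third property involves no embedded vectors at all and is a scalar computation with the weight vector alone.

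For metric preservation, I will write $\nu(\mu(x))=\begin{bmatrix}Qx\\0\end{bmatrix}$ and similarly for $y$. The term $i=n+1$ then contributes $w_{n+1}\cdot 0\cdot 0=0$. The remaining terms have weight $1$, so the sum equals $\sum_{i=0}^{n}(Qx)_i(Qy)_i=(Qx)^T(Qy)=x^TQ^TQy=x^Ty$. Here the transpose is the bilinear one, not conjugate-linear, consistent with the convention stated after the definition of the centering embedding, since $Q$ is real.

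For linear reducibility annihilation, the same splitting gives $\sum_{i=0}^{n}(Qx)_i + w_{n+1}^2\cdot 0 = 1_{n+1}^TQx = 0$. For replication annihilation, $\sum_{i=0}^{n+1} w_i^3 = (n+1)\cdot 1 + \bigl(-(n+1)^{1/3}\bigr)^3 = (n+1)-(n+1)=0$, taking the real cube root.

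I do not expect a genuine obstacle. The only point needing care is keeping the roles of the coordinates straight: the padded zero entry kills the $w_{n+1}$ contributions in the first two properties, while in the third the full weight vector is summed without any vector factor. That is exactly why the value $-(n+1)^{1/3}$ is chosen to balance the $n+1$ unit weights.
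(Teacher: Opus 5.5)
Your proposal is correct and follows the same route as the paper. In both, the padded zero entry removes the $i=n+1$ term from the first two sums, reducing them to $Q^TQ=I_n$ and $1_{n+1}^TQ=0$ on the coordinates where $w_i=1$, and the third identity is the direct computation $(n+1)-(n+1)=0$.
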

\begin{proof}
Since $w_i=1$ for $0\le i\le n$ and $\nu(\mu(x))_{n+1}=0$,
\begin{equation}
\sum_{i=0}^{n+1} w_i\nu(\mu(x))_i\nu(\mu(y))_i
=
\sum_{i=0}^{n}(Qx)_i(Qy)_i
=x^TQ^TQy
=x^Ty.
\end{equation}
Since $Q$ maps into $1_{n+1}^{\perp}$,
\begin{equation}
\sum_{i=0}^{n+1}w_i^2\nu(\mu(x))_i
=
\sum_{i=0}^{n}(Qx)_i
=0.
\end{equation}
Finally,
\begin{equation}
\sum_{i=0}^{n+1}w_i^3
=
\sum_{i=0}^{n}1-(n+1)
=0.
\end{equation}
\end{proof}
Hence, to eliminate both linearly reducible and replicated terms, we aggregate weighted, padded, and centered tensors.

\begin{definition}[Weighting and the combined embedding]
\label{def:weight}
Let $w$ be the weight vector from Definition~\ref{def:weight-vector}.
For finite coordinate sets $S\subseteq F$, define
\[
\Omega_{S,F}:(\mathbb C^{n+2})^{\otimes |F\setminus S|}
\longrightarrow(\mathbb C^{n+2})^{\otimes |F|}
\]
by
\begin{equation}
\Omega_{S,F}(T)_{\mathbf i}
=T_{\mathbf i|_{F\setminus S}}\prod_{r\in S}w_{i_r},
\qquad \mathbf i\in\ivl{0}{n+1}^{F}.
\end{equation}
The input modes are indexed by $F\setminus S$, and the output modes by
$F$, both in increasing order. The map inserts the coordinates in $S$
with the corresponding weights. Define the combined embedding by
\begin{equation}
\Phi_{S,F}=\Omega_{S,F}\circ(\nu\circ\mu)^{\otimes |F\setminus S|}.
\end{equation}
It centers and pads the retained modes, then inserts the weighted coordinates.
For its transpose, write $k=|F\setminus S|$ and
$F\setminus S=\{r_0<\cdots<r_{k-1}\}$. With respect to the algebraic
pairing,
\begin{equation}
\bigl(\Phi_{S,F}^{\star}(A)\bigr)_{\boldsymbol\alpha}
=\sum_{\mathbf i\in\ivl{0}{n+1}^{F}}
 A_{\mathbf i}\left(\prod_{r\in S}w_{i_r}\right)
 \left(\prod_{j=0}^{k-1}\nu(\mu(e_{\alpha_j}))_{i_{r_j}}\right),
\qquad \boldsymbol\alpha\in\ivl{0}{n-1}^k.
\label{eq:Psi-star}
\end{equation}
Here $e_0,\ldots,e_{n-1}$ are the standard basis vectors of $\mathbb C^n$.
\end{definition}

For a tripartition $(U,V,W)$ of $F$, metric preservation in Claim~\ref{cla:emb}, applied in each coordinate, gives
\begin{equation}
\sum_{\mathbf i\in\ivl{0}{n-1}^{F}}
X^{(U)}_{\mathbf i|_{U^c}}Y^{(V)}_{\mathbf i|_{V^c}}Z^{(W)}_{\mathbf i|_{W^c}}
=\sum_{\mathbf i\in\ivl{0}{n+1}^{F}}
\Phi_{U,F}(X^{(U)})_{\mathbf i}
\Phi_{V,F}(Y^{(V)})_{\mathbf i}
\Phi_{W,F}(Z^{(W)})_{\mathbf i},
\end{equation}
a decomposition of the contraction into $(n+2)^d$ rank-one terms.
The right-hand side is also defined for sets that do not form a tripartition.
If $|U|+|V|+|W|=d$, such a choice omits some coordinate in at least two blocks, so the second or third identity of Claim~\ref{cla:emb} makes the sum zero.
Consequently, for a balanced conflict-free family, aggregating the embedded blocks preserves the selected products and eliminates all mixed products.

\subsection{Full Bilinear Algorithm for Simultaneous Contraction}

We now combine the embeddings introduced in the preceding subsections into a full algorithm for simultaneous matrix multiplication.
The algorithm is relatively simple and exact, and hence may be of practical interest, though the corrections introduced in Section~\ref{sec:corrections} give a strictly smaller rank bound.
Throughout this subsection, let
\begin{equation}
d=3N.
\end{equation}
\Needspace{11\baselineskip}
Exact-rank bounds for these balanced families of compatible tripartitions are already known.
Flavi, Jelisiejew, and Micha{\l}ek~\cite[Corollary~6.11 and Proposition~6.13]{flavi2025symmetric}
bound the rank of the full balanced little-CW restriction, including its shared variable blocks.
Their result implies that every conflict-free family $\mathcal S\subseteq B_{N,N,N}$ satisfies
\begin{equation}
R\!\left(\langle n^N,n^N,n^N\rangle^{\oplus|\mathcal S|}\right)
\le (n+2)^{3N}
-\binom{3N}{2N+1}(n+1)^{N-1}.
\label{eq:prior-balanced-exact}
\end{equation}
The algorithm below gives an explicit aggregation formula with the simpler known budget $(n+2)^{3N}$.

Recall the combined embedding $\Phi_{S,F}$ and its transpose from Definition~\ref{def:weight}.
Here $\langle A,B\rangle=\sum_{\mathbf i}A_{\mathbf i}B_{\mathbf i}$, without complex conjugation. We have
\begin{equation}
\bigl\langle \Phi_{S,F}^{\star}(T),Z\bigr\rangle
=\bigl\langle T,\Phi_{S,F}(Z)\bigr\rangle
\qquad \forall Z\in(\mathbb C^n)^{\otimes |F\setminus S|}.
\label{eq:Psi-star-transpose}
\end{equation}

Algorithm~\ref{alg:Psi-bilinear-simultaneous-contractions} first embeds and sums the $A$ input blocks, and does the same for the $B$ input blocks. It multiplies the two aggregates entrywise, then applies $\Phi_{W_s,F}^{\star}$ to recover each output $C^{(s)}$. Thus by one aggregate evaluation, the algorithm computes $m$ independent contractions.

\begin{algorithm}[t]
\caption{Weighted padded bilinear algorithm for simultaneous contractions}
\label{alg:Psi-bilinear-simultaneous-contractions}
\begin{algorithmic}[1]
\Require
\[
\mathcal S=\{(U_s,V_s,W_s)\}_{s=1}^{m}\subseteq B_{N,N,N},
\qquad
U_i\sqcup V_j\sqcup W_\ell=F\Longrightarrow i=j=\ell.
\]
\Require
\[
A^{(s)}\in(\mathbb C^n)^{\otimes 2N}
\text{ indexed by }F\setminus U_s,
\qquad
B^{(s)}\in(\mathbb C^n)^{\otimes 2N}
\text{ indexed by }F\setminus V_s.
\]
\Ensure
\[
C^{(s)}\in(\mathbb C^n)^{\otimes 2N}
\text{ indexed by }F\setminus W_s,
\]
with
\[
C^{(s)}_{\boldsymbol{\alpha}}
=
\sum_{\substack{\mathbf{i}\in\ivl{0}{n-1}^{d}\\
\mathbf i|_{W_{s}^c}=\boldsymbol{\alpha}}}
A^{(s)}_{\mathbf i|_{U_{s}^c}}
B^{(s)}_{\mathbf i|_{V_{s}^c}}.
\]

\ForAll{\(\mathbf{k}\in\ivl{0}{n+1}^{d}\)}
    \State
    \[
    T_{\mathbf k}\gets
    \left(\sum_{s=1}^{m}\Phi_{U_s,F}(A^{(s)})_{\mathbf k}\right)
    \left(\sum_{s=1}^{m}\Phi_{V_s,F}(B^{(s)})_{\mathbf k}\right).
    \]
\EndFor

\For{\(s=1,\ldots,m\)}
    \State
    \[
    C^{(s)}
    \gets
    \Phi_{W_s,F}^{\star}(T)
    \]
\EndFor

\State \Return \(\{C^{(s)}\}_{s=1}^{m}\)
\end{algorithmic}
\end{algorithm}

\begin{theorem}[Correctness of Algorithm~\ref{alg:Psi-bilinear-simultaneous-contractions}]
\label{thm:Psi-bilinear-correctness}
If
\begin{equation}
\mathcal S=\{(U_s,V_s,W_s)\}_{s=1}^{m}\subseteq B_{N,N,N}
\end{equation}
satisfies
\begin{equation}
U_i\sqcup V_j\sqcup W_\ell=F\Longrightarrow i=j=\ell,
\end{equation}
then Algorithm~\ref{alg:Psi-bilinear-simultaneous-contractions} computes the
\(m\) simultaneous contractions
\begin{equation}
C^{(s)}_{\boldsymbol{\alpha}}
=
\sum_{\substack{\mathbf{i}\in\ivl{0}{n-1}^{d}\\
\mathbf i|_{W_{s}^c}=\boldsymbol{\alpha}}}
A^{(s)}_{\mathbf i|_{U_{s}^c}}
B^{(s)}_{\mathbf i|_{V_{s}^c}}.
\end{equation}
In particular, this gives a bilinear algorithm of rank at most
\begin{equation}
(n+2)^d=(n+2)^{3N}.
\end{equation}
\end{theorem}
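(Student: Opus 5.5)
We prove correctness by pairing each computed output $C^{(\ell)}$ with an arbitrary test tensor $Z^{(\ell)}\in(\mathbb C^n)^{\otimes 2N}$ indexed by $F\setminus W_\ell$. Let $\tilde C^{(\ell)}$ denote the intended contraction. It suffices to show that
\[
\sum_{\ell}\langle C^{(\ell)},Z^{(\ell)}\rangle=\sum_\ell\langle \tilde C^{(\ell)},Z^{(\ell)}\rangle
\]
for all choices of the $Z^{(\ell)}$. Taking all but one $Z^{(\ell)}$ equal to zero, and the remaining one equal to a standard basis tensor, then recovers each entry separately.

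By the transpose identity \eqref{eq:Psi-star-transpose}, $\langle\Phi^\star_{W_\ell,F}(T),Z^{(\ell)}\rangle=\langle T,\Phi_{W_\ell,F}(Z^{(\ell)})\rangle$. The left-hand side therefore equals
\[
\sum_{\mathbf k\in\ivl0{n+1}^d}\Big(\sum_i\Phi_{U_i,F}(A^{(i)})_{\mathbf k}\Big)\Big(\sum_j\Phi_{V_j,F}(B^{(j)})_{\mathbf k}\Big)\Big(\sum_\ell\Phi_{W_\ell,F}(Z^{(\ell)})_{\mathbf k}\Big).
\]
This is the embedded aggregate \eqref{eq:ctragg}. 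We expand it trilinearly into mixed terms indexed by $(i,j,\ell)$.

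For each mixed term, the summand factors over coordinates $r\in F$ once we expand each $\nu\circ\mu$ factor via its matrix entries, since $\Phi$ is a tensor product of per-coordinate maps. Concretely, write $A^{(i)}$, $B^{(j)}$ and $Z^{(\ell)}$ in standard bases indexed by $\boldsymbol\alpha,\boldsymbol\beta,\boldsymbol\gamma$. The sum over $\mathbf k$ then becomes a product over $r$ of one-dimensional sums $\sum_{k_r}\prod(\cdots)$. Each such sum depends on $\eta_r(i,j,\ell)$ as follows:
\begin{itemize}
\item $\eta_r=0$: the factor is $\sum_k (\nu\mu e_a)_k(\nu\mu e_b)_k(\nu\mu e_c)_k$, a Hadamard factor that occurs only in non-tripartition terms.
\item $\eta_r=1$: the factor is $\sum_k w_k(\nu\mu e_a)_k(\nu\mu e_b)_k=\delta_{ab}$, by metric preservation in Claim~\ref{cla:emb}.
\item $\eta_r=2$: the factor is $\sum_k w_k^2(\nu\mu e_a)_k=0$, by linear reducibility annihilation.
\item $\eta_r=3$: the factor is $\sum_k w_k^3=0$, by replication annihilation.
\end{itemize}

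Since every set in $\mathcal S$ has size $N$, we have $\sum_r\eta_r=3N=d$. Hence either every $\eta_r=1$, or some $\eta_r\ge2$, because a coordinate with $\eta_r=0$ forces another with $\eta_r\ge2$. In the first case $(U_i,V_j,W_\ell)$ is a compatible choice, so the hypothesis gives $i=j=\ell$, and the Kronecker deltas reduce the term to $\sum_{\mathbf i}A^{(\ell)}_{\mathbf i|_{U_\ell^c}}B^{(\ell)}_{\mathbf i|_{V_\ell^c}}Z^{(\ell)}_{\mathbf i|_{W_\ell^c}}=\langle\tilde C^{(\ell)},Z^{(\ell)}\rangle$. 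In the second case the vanishing one-dimensional factor kills the whole term. Summing over all mixed terms gives the claimed identity.

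The rank bound is then immediate. The only nonscalar multiplications are the $(n+2)^d$ entrywise products forming $T_{\mathbf k}$. The embeddings, sums and transposes are linear maps of the inputs or of $T$, so the algorithm is a bilinear algorithm of rank at most $(n+2)^{3N}$.

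The main obstacle is the factorization step. We must track carefully that, for a coordinate $r\in U_i$, the $X$-factor contributes the weight $w_{k_r}$ rather than an embedded basis vector. The per-coordinate sum then has exactly the forms listed above, with the correct multiplicities of $w$ matching $\eta_r$. Everything else is bookkeeping via Claim~\ref{cla:emb}.
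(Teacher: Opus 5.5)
Your proof is correct and follows essentially the same route as the paper. Both arguments pair the outputs with test tensors $Z^{(\ell)}$ via the transpose identity and expand the weighted aggregate over $(i,j,\ell)$. Both then kill every non-tripartition term through the counting argument $\sum_r\eta_r=3N$ together with the second and third identities of Claim~\ref{cla:emb}, and finish the diagonal terms with conflict-freeness and metric preservation. Your explicit per-coordinate case split on $\eta_r$ just spells out what the paper invokes through Definitions~\ref{def:lr} and~\ref{def:rep}.
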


\begin{proof}
Let
\[
Z^{(s)}\in(\mathbb C^n)^{\otimes 2N}
\]
be indexed by \(F\setminus W_s\).  By the definition of
the transpose of $\Phi_{S,F}$,
\begin{align}
\sum_{s=1}^{m}\bigl\langle C^{(s)},Z^{(s)}\bigr\rangle
&=\sum_{\mathbf k\in\ivl{0}{n+1}^{F}}T_{\mathbf k}
  \sum_{s=1}^{m}\Phi_{W_s,F}(Z^{(s)})_{\mathbf k}\notag\\
&=\sum_{\mathbf k\in\ivl{0}{n+1}^{F}}
 \left(\sum_{i=1}^{m}\Phi_{U_i,F}(A^{(i)})_{\mathbf k}\right)
 \left(\sum_{j=1}^{m}\Phi_{V_j,F}(B^{(j)})_{\mathbf k}\right)
 \left(\sum_{\ell=1}^{m}\Phi_{W_\ell,F}(Z^{(\ell)})_{\mathbf k}\right).
\label{eq:Psi-bilinear-trilinear-reduction}
\end{align}
Thus multiplying each output entry $C^{(s)}_{\boldsymbol\alpha}$ by an independent variable $Z^{(s)}_{\boldsymbol\alpha}$ and summing over $s$ and $\boldsymbol\alpha$ gives exactly the weighted padded aggregated trilinear form.

Expanding \eqref{eq:Psi-bilinear-trilinear-reduction} gives one term for
each triple \((i,j,\ell)\).  If
\begin{equation}
U_i\sqcup V_j\sqcup W_\ell\neq F,
\end{equation}
then, since all three sets have cardinality \(N\) and \(d=3N\), the
corresponding term is either linearly reducible or replicated in the sense of
Definitions~\ref{def:lr} and~\ref{def:rep}.  By the filtering identities of
Claim~\ref{cla:emb}, namely
\begin{equation}
\sum_{q=0}^{n+1}w_q^2\nu(\mu(x))_q=0,
\qquad
\sum_{q=0}^{n+1}w_q^3=0,
\end{equation}
such terms vanish after centering, padding, and weighting.

The only remaining terms satisfy
\begin{equation}
U_i\sqcup V_j\sqcup W_\ell=F.
\end{equation}
By conflict-freeness, this forces \(i=j=\ell\).  For these diagonal terms, every
coordinate is retained by exactly two of the three tensors and weighted by the
third, so Claim~\ref{cla:emb} gives, in each coordinate,
\begin{equation}
\sum_{q=0}^{n+1}w_q\nu(\mu(e_a))_q\nu(\mu(e_b))_q
=
\delta_{ab},
\end{equation}
where $e_a,e_b$ are the standard basis vectors of $\mathbb C^n$.

Applying this identity independently in all \(d\) coordinates gives
\begin{equation}
\sum_{s=1}^{m}\bigl\langle C^{(s)},Z^{(s)}\bigr\rangle
=
\sum_{s=1}^{m}
\sum_{\mathbf{i}\in\ivl{0}{n-1}^{d}}
A^{(s)}_{\mathbf i|_{U_{s}^c}}
B^{(s)}_{\mathbf i|_{V_{s}^c}}
Z^{(s)}_{\mathbf i|_{W_{s}^c}}.
\end{equation}
Regrouping by
\begin{equation}
\boldsymbol{\alpha}=\mathbf i|_{W_{s}^c}
\end{equation}
yields
\begin{equation}
\sum_{s=1}^{m}\bigl\langle C^{(s)},Z^{(s)}\bigr\rangle
=
\sum_{s=1}^{m}
\sum_{\boldsymbol{\alpha}\in\ivl{0}{n-1}^{2N}}
\left(
\sum_{\substack{\mathbf{i}\in\ivl{0}{n-1}^{d}\\
\mathbf i|_{W_{s}^c}=\boldsymbol{\alpha}}}
A^{(s)}_{\mathbf i|_{U_{s}^c}}
B^{(s)}_{\mathbf i|_{V_{s}^c}}
\right)
Z^{(s)}_{\boldsymbol{\alpha}}.
\end{equation}
Since the \(Z^{(s)}\) are arbitrary, the displayed coefficient is precisely
the output tensor \(C^{(s)}\).

Finally, the algorithm performs one scalar multiplication for each
\[
\mathbf{k}\in\ivl{0}{n+1}^{d},
\]
and all other operations are linear.  Hence the bilinear rank is at most
\begin{equation}
|\ivl{0}{n+1}^{d}|=(n+2)^d=(n+2)^{3N}.
\end{equation}
\end{proof}

\subsection{Centered corrections and exact rank}
\label{sec:compat-color}
\label{sec:corrections}
\label{sec:compat}

For a fixed-shape conflict-free family, the weighted embedding gives an exact
algorithm with rank at most $(n+2)^d$.
We now use the smaller centered dimension $n+1$ and cancel replicated cross
terms by lower-order aggregates.

Let
\begin{equation}
\mathcal S=\{\mathcal S_1,\ldots,\mathcal S_m\},\qquad
\mathcal S_t=(U_t,V_t,W_t),\qquad
U_t\sqcup V_t\sqcup W_t=F,
\end{equation}
where $t=1,\ldots,m$ and the part sizes may depend on $t$.

Assign independent input blocks $X^{(t)},Y^{(t)}$ and output variables $Z^{(t)}$ to each $\mathcal S_t$. In particular, for $t\ne t'$, $X^{(t)}$ and $X^{(t')}$ are distinct blocks even when $U_t=U_{t'}$, and similarly for $Y$ and $Z$. The resulting target direct sum is
\begin{align}
\phi_{\mathcal S}(X,Y,Z)
 =\sum_{t=1}^{m}\ \sum_{\mathbf i\in\ivl{0}{n-1}^{F}}
 X^{(t)}_{\mathbf i|_{U_{t}^c}}
 Y^{(t)}_{\mathbf i|_{V_{t}^c}}
 Z^{(t)}_{\mathbf i|_{W_{t}^c}} .
\label{eq:phi-S-labeled}
\end{align}
The summand indexed by $t$ is the matrix multiplication tensor
\[
\left\langle n^{|V_t|},n^{|W_t|},n^{|U_t|}\right\rangle.
\]

\begin{definition}[Centered lifting]
Let $P\subseteq S\subseteq F$, and let $T\in(\mathbb C^n)^{\otimes |F\setminus S|}$ have modes indexed by $F\setminus S$.
Define $\Gamma_S^P(T)\in(\mathbb C^{n+1})^{\otimes |F\setminus P|}$ by centering $T$ and replicating it along $S\setminus P$:
\begin{equation}
\Gamma_S^P(T)_{\mathbf j}
=\bigl(\mu^{\otimes |F\setminus S|}(T)\bigr)_{\mathbf j|_{F\setminus S}},
\qquad \mathbf j\in\ivl{0}{n}^{F\setminus P}.
\label{eq:centered-lifting}
\end{equation}
We abbreviate $\Gamma_S^\emptyset$ by $\Gamma_S$.
\end{definition}

To cancel cross terms that share replicated coordinates $P$, we aggregate
all lower-order liftings whose input blocks omit these coordinates.
\begin{definition}[Centered aggregate]
\label{def:centered-aggregate}
For $P\subseteq F$, define
\begin{align}
\mathcal A_{\mathcal S}(P)
={}&\sum_{\mathbf j\in\ivl{0}{n}^{F\setminus P}}
\left(\sum_{\substack{1\le t\le m\\P\subseteq U_t}}
\Gamma_{U_t}^P(X^{(t)})_{\mathbf j}\right)
\left(\sum_{\substack{1\le t\le m\\P\subseteq V_t}}
\Gamma_{V_t}^P(Y^{(t)})_{\mathbf j}\right) 
\left(\sum_{\substack{1\le t\le m\\P\subseteq W_t}}
\Gamma_{W_t}^P(Z^{(t)})_{\mathbf j}\right),
\label{eq:centered-correction-aggregate}
\end{align}
For nonempty $P$, the correction
$(-1)^{|P|}(n+1)^{|P|}\mathcal A_{\mathcal S}(P)$ is called a
\emph{singleton}, \emph{pair}, \emph{triple}, or \emph{quadruple}
correction when $|P|=1,2,3,4$, respectively. The name specifies the number of deleted coordinates.
\end{definition}
Its rank satisfies
\begin{equation}
R\bigl(\mathcal A_{\mathcal S}(P)\bigr)
\le (n+1)^{d-|P|}.
\label{eq:correction-rank}
\end{equation}

\begin{definition}[Mixed trilinear form]
\label{def:mixed-form}
For three selected tensor blocks $X^{(i)},Y^{(j)},Z^{(k)}$, whose modes
are indexed by $U_i^c,V_j^c,W_k^c$, respectively, define
\begin{equation}
\phi_{ijk}
=
\sum_{\boldsymbol\ell\in\ivl{0}{n}^{F}}
\Gamma_{U_i}(X^{(i)})_{\boldsymbol\ell}
\Gamma_{V_j}(Y^{(j)})_{\boldsymbol\ell}
\Gamma_{W_k}(Z^{(k)})_{\boldsymbol\ell}.
\label{eq:surviving-form}
\end{equation}
Its set of replicated coordinates is $\mathcal R_{ijk}=U_i\cap V_j\cap W_k$.

\end{definition}

Expanding the inner sums in Definition~\ref{def:centered-aggregate} gives
\begin{equation}
\mathcal A_{\mathcal S}(P)
=(n+1)^{-|P|}
\sum_{\substack{i,j,k\\P\subseteq\mathcal R_{ijk}}}\phi_{ijk},
\label{eq:correction-aggregate-expansion}
\end{equation}
since summing over each coordinate in $P$ contributes a factor $n+1$.
For any family $\mathcal Q$ of nonempty subsets of $F$, define
\begin{equation}
c_{\mathcal Q}(R)=1+
\sum_{\substack{P\in\mathcal Q\\P\subseteq R}}(-1)^{|P|}
\qquad(R\subseteq F).
\label{eq:partial-correction-coefficient}
\end{equation}
Then
\begin{equation}
\mathcal A_{\mathcal S}(\emptyset)
+\sum_{P\in\mathcal Q}(-1)^{|P|}(n+1)^{|P|}\mathcal A_{\mathcal S}(P)
=\sum_{i,j,k}c_{\mathcal Q}(\mathcal R_{ijk})\phi_{ijk}.
\label{eq:general-correction-identity}
\end{equation}

Retain the local multiplicity notation $\eta_r(i,j,k)$ from
Section~\ref{sec:aggregation}.
A mixed choice is annihilated by centering whenever $\eta_r=2$ for some $r$.
If it has no coordinate of multiplicity two and $\mathcal R_{ijk}\ne\emptyset$, its
replicated coordinates can be removed by inclusion--exclusion.
\begin{definition}[Correction support]
\label{def:correction-support}
The correction support is
\begin{equation}
\mathcal P(\mathcal S)
=
\left\{
\begin{array}{c|l}
P\subseteq F
& P\ne\emptyset,\ \text{and there exist }i,j,k\text{ such that}\vphantom{\displaystyle\sum_{r=1}^{d}}\\[-1mm]
& P\subseteq \mathcal R_{ijk},\quad
(i,j,k)\notin\{(t,t,t):1\le t\le m\},\quad
\eta_r(i,j,k)\ne2\ \text{for all }r
\end{array}
\right\}.
\label{eq:correction-support}
\end{equation}
\end{definition}
Let
\begin{equation}
\kappa_s=\bigl|\{P\in\mathcal P(\mathcal S):|P|=s\}\bigr|
\end{equation}
and define the multiplication budget for the main aggregate
$\mathcal A_{\mathcal S}(\emptyset)$ and the corrections
$\mathcal A_{\mathcal S}(P)$ indexed by $P\in\mathcal P(\mathcal S)$
\begin{equation}
\rho_{\mathcal S}(n)
=(n+1)^d+
\sum_{P\in\mathcal P(\mathcal S)}(n+1)^{d-|P|}
=(n+1)^d+\sum_{s=1}^d\kappa_s(n+1)^{d-s}.
\label{eq:rho-S}
\end{equation}

\begin{definition}[Mixed products after replicated-term cancellation]
\label{def:survivor-set}
The mixed choices not eliminated by centering and the corrections
indexed by $\mathcal P(\mathcal S)$ are
\begin{equation}
\mathcal B(\mathcal S)
=
\left\{(i,j,k)\notin\{(t,t,t):1\le t\le m\}:
\eta_r(i,j,k)\le1\ \text{for every }r\in F\right\}.
\label{eq:survivor-set}
\end{equation}
\end{definition}
For $(i,j,k)\in\mathcal B(\mathcal S)$, the remaining term is
$\phi_{ijk}$ of Definition~\ref{def:mixed-form}.
When the three selected part sizes sum to $d$, every local multiplicity in a
choice in $\mathcal B(\mathcal S)$ equals one and $\phi_{ijk}$ is itself a matrix multiplication
trilinear form.  If the selected part sizes sum to less than $d$, coordinates
with $\eta_r=0$ remain and $\phi_{ijk}$ is a centered trilinear form rather than a
matrix multiplication tensor.

For each replicated intersection, inclusion--exclusion subtracts the
aggregate with one coordinate deleted, adds those with two coordinates
deleted, and continues over its subsets. This preserves the desired
products and cancels every mixed product with a replicated coordinate.
The following identity gives both the remaining terms and the cost of
these corrections.

\begin{lemma}[Centered correction identity]
\label{lem:corr}
Define
\begin{equation}
\Phi_{\mathcal S}
=
\sum_{P\in\{\emptyset\}\cup\mathcal P(\mathcal S)}
(-1)^{|P|}(n+1)^{|P|}
\mathcal A_{\mathcal S}(P).
\label{eq:master-filter}
\end{equation}
For the target form $\phi_{\mathcal S}$ defined in
\eqref{eq:phi-S-labeled},
\begin{equation}
\Phi_{\mathcal S}
=
\phi_{\mathcal S}
+
\sum_{(i,j,k)\in\mathcal B(\mathcal S)}\phi_{ijk},
\qquad
R(\Phi_{\mathcal S})\le\rho_{\mathcal S}(n).
\label{eq:centered-filtered-identity}
\end{equation}
\end{lemma}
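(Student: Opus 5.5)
The plan is to start from the general correction identity \eqref{eq:general-correction-identity} with $\mathcal Q=\mathcal P(\mathcal S)$. This gives
\[
\Phi_{\mathcal S}=\sum_{i,j,k}c_{\mathcal P(\mathcal S)}(\mathcal R_{ijk})\,\phi_{ijk}.
\]
The proof then reduces to classifying each triple $(i,j,k)$ and showing that the weighted term $c_{\mathcal P(\mathcal S)}(\mathcal R_{ijk})\phi_{ijk}$ equals the corresponding term on the right-hand side of \eqref{eq:centered-filtered-identity}. The rank bound is immediate afterwards. The main aggregate $\mathcal A_{\mathcal S}(\emptyset)$ has rank at most $(n+1)^d$, and each $\mathcal A_{\mathcal S}(P)$ has rank at most $(n+1)^{d-|P|}$ by \eqref{eq:correction-rank}. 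Scalar multiples do not change rank, and rank is subadditive, so $R(\Phi_{\mathcal S})\le\rho_{\mathcal S}(n)$ by \eqref{eq:rho-S}.

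I split the triples into four cases.

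\emph{Case 1: some coordinate has $\eta_r(i,j,k)=2$.} Then coordinate $r$ is retained by exactly one of the three liftings. Summing over $\ell_r$ gives a fiber sum of a centered tensor, so $\phi_{ijk}=0$ by the linear-reducibility lemma. The coefficient is therefore irrelevant.

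\emph{Case 2: the diagonal triples $(t,t,t)$.} Here $\mathcal R_{ttt}=U_t\cap V_t\cap W_t=\emptyset$ because $(U_t,V_t,W_t)$ is a tripartition. No nonempty $P$ is contained in $\emptyset$, so $c=1$. Every coordinate is then retained by exactly two blocks, and applying $Q^TQ=I_n$ coordinatewise, as in Lemma~\ref{lem:center_tri}, gives $\phi_{ttt}$ equal to the $t$-th summand of $\phi_{\mathcal S}$ in \eqref{eq:phi-S-labeled}.

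\emph{Case 3: non-diagonal triples with all $\eta_r\le1$.} Then $\mathcal R_{ijk}=\emptyset$, so again $c=1$, and these are exactly the elements of $\mathcal B(\mathcal S)$ from Definition~\ref{def:survivor-set}. They contribute $\sum_{\mathcal B(\mathcal S)}\phi_{ijk}$.

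\emph{Case 4: non-diagonal triples with no $\eta_r=2$ but $\mathcal R_{ijk}=R\neq\emptyset$.} This is the key step. By Definition~\ref{def:correction-support}, every nonempty subset $P\subseteq R$ belongs to $\mathcal P(\mathcal S)$, since the triple $(i,j,k)$ itself witnesses membership. Hence
\[
c_{\mathcal P(\mathcal S)}(R)=\sum_{P\subseteq R}(-1)^{|P|}=(1-1)^{|R|}=0,
\]
and these terms cancel.

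The step needing the most care is verifying \eqref{eq:correction-aggregate-expansion} precisely. For each $P$, one must check that the triples appearing in $\mathcal A_{\mathcal S}(P)$ are exactly those with $P\subseteq\mathcal R_{ijk}$. One must also check that replicating each lifting along the coordinates in $P$ and summing over them yields exactly the factor $(n+1)^{|P|}$ times the corresponding lower-order aggregate. This holds because $\Gamma^P$ drops only the coordinates in $P$, which every selected block omits. Beyond that expansion, the argument needs only the binomial cancellation in Case 4 and the observation that the four cases exhaust all triples.
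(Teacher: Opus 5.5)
Your proposal is correct and follows the paper's proof essentially step for step. You apply \eqref{eq:general-correction-identity} with $\mathcal Q=\mathcal P(\mathcal S)$, and then argue case by case: terms with some $\eta_r=2$ vanish by centering; diagonal terms recover the target via $Q^TQ=I_n$; replicated non-diagonal terms cancel because $c_{\mathcal P(\mathcal S)}(\mathcal R_{ijk})=\sum_{P\subseteq\mathcal R_{ijk}}(-1)^{|P|}=0$; and the remaining non-diagonal terms are exactly $\mathcal B(\mathcal S)$, with the rank bound obtained by summing \eqref{eq:correction-rank}. Your explicit check of the expansion \eqref{eq:correction-aggregate-expansion} and of the exhaustiveness of the four cases only spells out details the paper leaves implicit.
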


\begin{proof}
Apply \eqref{eq:general-correction-identity} with
$\mathcal Q=\mathcal P(\mathcal S)$.
For a diagonal choice, $\mathcal R_{ttt}=\emptyset$ and $Q^TQ=I_n$
recovers the desired product. If some $\eta_r=2$, then
$\phi_{ijk}=0$ by centering. For every remaining choice with
$\mathcal R_{ijk}\ne\emptyset$, all its nonempty subsets belong to
$\mathcal P(\mathcal S)$, so
\begin{equation}
c_{\mathcal P(\mathcal S)}(\mathcal R_{ijk})
=\sum_{P\subseteq\mathcal R_{ijk}}(-1)^{|P|}=0.
\end{equation}
The other non-diagonal choices are exactly $\mathcal B(\mathcal S)$.
Summing \eqref{eq:correction-rank} over the evaluated aggregates gives
the rank bound.
\end{proof}

For a fixed-shape family $\mathcal S\subseteq B_{s,t,v}$, every mixed choice
has
\begin{equation}
\sum_{r\in F}\eta_r(i,j,k)=|U_i|+|V_j|+|W_k|=t+s+v=d.
\end{equation}
Hence a choice with $\eta_r\le1$ for all $r$ must have $\eta_r=1$ for every
$r$.  In this case $\mathcal B(\mathcal S)$ consists exactly
of the non-diagonal compatible combinations of Definition~\ref{def:compat}.

If this fixed-shape family is conflict-free, no mixed products remain, so
the identity is already an exact algorithm.

Splitting the weighted aggregate of Algorithm~\ref{alg:Psi-bilinear-simultaneous-contractions}
by the coordinates $P$ at the padded index $n+1$ gives
\begin{equation}
\Phi^{\rm wt}_{\mathcal S}
=\sum_{P\subseteq F}(-1)^{|P|}(n+1)^{|P|}\mathcal A_{\mathcal S}(P).
\label{eq:weighted-inclusion-exclusion}
\end{equation}
Padding leaves only blocks that omit $P$, and their three weight factors
multiply to $(-1)^{|P|}(n+1)^{|P|}$. Thus the weighted algorithm evaluates
all subsets, while the corrected algorithm retains only those needed.

Since $\mathcal P(\mathcal S)$ consists of nonempty subsets of $F$,
\begin{equation}
\rho_{\mathcal S}(n)
\le
\sum_{P\subseteq F}(n+1)^{d-|P|}
=(n+2)^d.
\label{eq:correction-vs-padding}
\end{equation}
Thus the corrected algorithm uses at most the rank budget of the
weighted embedding, with lower counts when fewer subsets require correction.

\section{Border-rank bounds by parameter scaling}
\label{sec:grading}

We now develop border-rank constructions that eliminate mixed products by
scaling the variable blocks by powers of $\lambda$. This suppresses the
unwanted products in $\mathcal B(\mathcal S)$ of
Definition~\ref{def:survivor-set}, including conflicts between
tripartitions, and can also suppress replicated terms without computing
their corrections. Desired products have degree zero, while each
unwanted product is either canceled by corrections or has positive
degree and tends to zero with $\lambda$. The resulting limit gives a
border-rank bound~\cite{bini1980approximate,strassen1991degeneration}.

\begin{definition}[Grading]
\label{def:integral-grading}
A grading of $\mathcal S$ assigns
$(a_t,b_t,c_t)\in\mathbb Q^3$ for $t=1,\ldots,m$ such that
\begin{equation}
a_t+b_t+c_t=0
\label{eq:shifted-diagonal}
\end{equation}
and
\begin{equation}
a_i+b_j+c_k>0\qquad((i,j,k)\in\mathcal B(\mathcal S)).
\label{eq:integral-grading}
\end{equation}
The grading is integral when all exponents are integers.
\end{definition}
Rational exponents permit rescaling the smallest positive degree of the
remaining mixed products to one in Section~\ref{sec:approximation-error}.
Replacing $\lambda$ by a suitable positive integer power gives the same
family of tensors with integer exponents.

\begin{lemma}
\label{lem:kcolor-injective}
If $\mathcal S$ admits a grading, then each of the maps
$t\mapsto U_t$, $t\mapsto V_t$, and $t\mapsto W_t$ is injective.
\end{lemma}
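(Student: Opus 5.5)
We argue by contradiction, in the same way as Lemma~\ref{lem:conflict-injective}, but with the conflict-free hypothesis replaced by the strict positivity condition \eqref{eq:integral-grading}. Suppose $U_i=U_j$ for some $i\ne j$. The natural witnesses are the mixed choices $(i,j,j)$ and $(j,i,i)$.

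First we check that both lie in $\mathcal B(\mathcal S)$. For $(i,j,j)$ the selected sets are $U_i=U_j$, $V_j$, $W_j$, which form the tripartition $\mathcal S_j$ of $F$. Hence $\eta_r(i,j,j)=1$ for every $r\in F$. Since $i\ne j$, the choice is not diagonal, so $(i,j,j)\in\mathcal B(\mathcal S)$ by Definition~\ref{def:survivor-set}. Symmetrically, the sets of $(j,i,i)$ are $U_j=U_i$, $V_i$, $W_i$, which form $\mathcal S_i$, so $(j,i,i)\in\mathcal B(\mathcal S)$ as well. Only the equality of the $U$-parts is used, so this works even when part sizes vary with $t$, as allowed in Section~\ref{sec:corrections}.

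Next we apply the grading. Condition \eqref{eq:integral-grading} gives $a_i+b_j+c_j>0$ and $a_j+b_i+c_i>0$. Adding these and regrouping yields
\[
(a_i+b_i+c_i)+(a_j+b_j+c_j)>0 .
\]
The left side is $0+0$ by \eqref{eq:shifted-diagonal}, a contradiction. So $t\mapsto U_t$ is injective.

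For $V$, suppose $V_i=V_j$ with $i\ne j$ and use the pair $(j,i,j)$ and $(i,j,i)$. Their sets are $\mathcal S_j$ and $\mathcal S_i$ respectively, so both lie in $\mathcal B(\mathcal S)$, and summing their degrees again gives $0>0$. For $W$, suppose $W_i=W_j$ with $i\ne j$ and use $(j,j,i)$ and $(i,i,j)$ in the same way.

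There is no real obstacle. The only point needing care is that a single mixed choice does not suffice: its degree $a_i+b_j+c_j=a_i-a_j$ need not be negative. Pairing it with the swapped choice makes the two degrees cancel.
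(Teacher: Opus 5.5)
Your proof is correct and follows essentially the same route as the paper's: the paper also uses the pair $(i,j,j)$ and $(j,i,i)$, which lie in $\mathcal B(\mathcal S)$, and derives the contradiction $a_i-a_j>0$ and $a_j-a_i>0$. Summing the two inequalities, as you do, is the same argument, and your explicit witnesses for the $V$ and $W$ projections fill in the cases the paper leaves as ``analogous.''
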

\begin{proof}
If $U_i=U_j$ for $i\ne j$, then both $(i,j,j)$ and
$(j,i,i)$ lie in $\mathcal B(\mathcal S)$. Applying
\eqref{eq:integral-grading} to these choices and using $a_t+b_t+c_t=0$ gives
$a_i-a_j>0$ and $a_j-a_i>0$, a contradiction. The other two projections
are analogous.
\end{proof}

Replicated mixed terms of positive degree tend to zero with $\lambda$.
We therefore select corrections only for those of nonpositive degree.

\begin{definition}[Corrections for nonpositive degrees]
\label{def:degree-corrections}
For a grading of $\mathcal S$, define
\begin{equation}
\mathcal Q_{\le0}
=\bigcup_{a_i+b_j+c_k\le0}
  \{P:\emptyset\ne P\subseteq\mathcal R_{ijk}\},
\label{eq:target-correction-family}
\end{equation}
where the union is over non-diagonal triples with
$\eta_r(i,j,k)\ne2$ for every $r\in F$.
\end{definition}
Correction sets can be shared, so this choice may also cancel terms of
positive degree. A construction is \emph{correction-free} when its
selected correction family is empty.

More generally, let $\mathcal Q\subseteq\mathcal P(\mathcal S)$ be any selected
correction family. Using the
multiplication budget $\rho_{\mathcal S}(n)$ from \eqref{eq:rho-S}, the
budget for the main aggregate and the selected corrections is
\begin{equation}
\rho_{\mathcal S,\mathcal Q}(n)
=(n+1)^d+\sum_{P\in\mathcal Q}(n+1)^{d-|P|}
\le\rho_{\mathcal S}(n).
\label{eq:partial-correction-budget}
\end{equation}

For $\lambda>0$, substitute $\lambda^{a_t}X^{(t)}$,
$\lambda^{b_t}Y^{(t)}$, and $\lambda^{c_t}Z^{(t)}$ for the variable blocks
in \eqref{eq:centered-correction-aggregate}, and denote the resulting
aggregate by $\mathcal A_{\mathcal S,\lambda}(P)$. Then
\begin{equation}
\Phi_{\mathcal S,\mathcal Q}(\lambda)
=\mathcal A_{\mathcal S,\lambda}(\emptyset)
 +\sum_{P\in\mathcal Q}(-1)^{|P|}(n+1)^{|P|}
   \mathcal A_{\mathcal S,\lambda}(P).
\label{eq:partial-filter}
\end{equation}
Applying \eqref{eq:general-correction-identity} to the scaled blocks gives
\begin{equation}
\Phi_{\mathcal S,\mathcal Q}(\lambda)
=\phi_{\mathcal S}+
\sum_{\substack{(i,j,k)\ \text{non-diagonal}\\
\eta_r(i,j,k)\ne2\ \text{for all }r\in F}}
 c_{\mathcal Q}(\mathcal R_{ijk})\lambda^{a_i+b_j+c_k}\phi_{ijk}.
\label{eq:filtered-identity}
\end{equation}
When $\mathcal Q=\mathcal P(\mathcal S)$, this is the scaled version of
\eqref{eq:master-filter}, with only the mixed products in
$\mathcal B(\mathcal S)$ remaining.

\begin{definition}[Remaining mixed products]
\label{def:aggregate-index-sets}
For a tripartition family $\mathcal S$ and selected corrections
$\mathcal Q\subseteq\mathcal P(\mathcal S)$, let
\begin{equation}
\mathcal J=\left\{(i,j,k)\ \text{non-diagonal}:
\eta_r(i,j,k)\ne2\ \text{for every }r\in F,\quad
c_{\mathcal Q}(\mathcal R_{ijk})\ne0\right\}.
\label{eq:residual-index-set}
\end{equation}
When $\mathcal Q=\mathcal P(\mathcal S)$, we have
$\mathcal J=\mathcal B(\mathcal S)$ of Definition~\ref{def:survivor-set}.
\end{definition}

The following theorem shows that this scaling makes the unwanted
products tend to zero while using at most $\rho_{\mathcal S,\mathcal Q}(n)$
rank-one terms for the main aggregate and its corrections.
\begin{theorem}[Border rank by parameter scaling]
\label{thm:graded}
Let $\mathcal Q\subseteq\mathcal P(\mathcal S)$. If a grading of
$\mathcal S$ has $a_i+b_j+c_k>0$ for every remaining mixed product
$(i,j,k)\in\mathcal J$ of Definition~\ref{def:aggregate-index-sets}, then
\begin{equation}
\underline R\left(\bigoplus_{t=1}^{m}
\left\langle n^{|V_t|},n^{|W_t|},n^{|U_t|}\right\rangle\right)
\le\rho_{\mathcal S,\mathcal Q}(n)\le\rho_{\mathcal S}(n).
\label{eq:graded-border-rank}
\end{equation}
\end{theorem}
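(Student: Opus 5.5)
The approach is to show that $\Phi_{\mathcal S,\mathcal Q}(\lambda)$ is, for each $\lambda>0$, a tensor of rank at most $\rho_{\mathcal S,\mathcal Q}(n)$, and that it converges to the target direct sum $\phi_{\mathcal S}$ as $\lambda\to0^+$. This is exactly the definition of border rank. The second inequality in \eqref{eq:graded-border-rank} is immediate from \eqref{eq:partial-correction-budget}, since $\mathcal Q\subseteq\mathcal P(\mathcal S)$ and every term of the budget is nonnegative.

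\emph{Rank for fixed $\lambda$.} Substituting $\lambda^{a_t}X^{(t)}$, $\lambda^{b_t}Y^{(t)}$, $\lambda^{c_t}Z^{(t)}$ is a linear change of variables on each block, so the rank bound \eqref{eq:correction-rank} still holds for each scaled aggregate $\mathcal A_{\mathcal S,\lambda}(P)$. Each such aggregate is a sum over $\mathbf j\in\ivl{0}{n}^{F\setminus P}$ of products of three linear forms. Multiplying by the scalar $(-1)^{|P|}(n+1)^{|P|}$ does not change rank. Summing over $P\in\{\emptyset\}\cup\mathcal Q$ via \eqref{eq:partial-filter} therefore gives $R(\Phi_{\mathcal S,\mathcal Q}(\lambda))\le(n+1)^d+\sum_{P\in\mathcal Q}(n+1)^{d-|P|}=\rho_{\mathcal S,\mathcal Q}(n)$.

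\emph{Limit.} I would invoke the expansion \eqref{eq:filtered-identity}. It follows from \eqref{eq:general-correction-identity} applied to the scaled blocks, together with three facts from the proof of Lemma~\ref{lem:corr}:
\begin{itemize}
\item each $\phi_{ijk}$ is trilinear, so scaling multiplies it by $\lambda^{a_i+b_j+c_k}$;
\item diagonal terms have degree $a_t+b_t+c_t=0$ and $\mathcal R_{ttt}=\emptyset$, so they reproduce $\phi_{\mathcal S}$ by Lemma~\ref{lem:center_tri}, using $Q^TQ=I_n$;
\item terms with some $\eta_r=2$ vanish by centering.
\end{itemize}
The only nondiagonal terms with nonzero coefficient $c_{\mathcal Q}(\mathcal R_{ijk})$ are those indexed by $\mathcal J$. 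So
\[
\Phi_{\mathcal S,\mathcal Q}(\lambda)=\phi_{\mathcal S}+\sum_{(i,j,k)\in\mathcal J}c_{\mathcal Q}(\mathcal R_{ijk})\lambda^{a_i+b_j+c_k}\phi_{ijk}.
\]
By hypothesis every exponent in this finite sum is a positive rational. Each $\phi_{ijk}$ is a fixed tensor independent of $\lambda$, so the sum tends to zero as $\lambda\to0^+$. This shows $\phi_{\mathcal S}$ is a limit of tensors of rank at most $\rho_{\mathcal S,\mathcal Q}(n)$.

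\emph{Identification and the main difficulty.} I must check that $\phi_{\mathcal S}$, with independent blocks per $t$ as in \eqref{eq:phi-S-labeled}, is the direct sum $\bigoplus_t\langle n^{|V_t|},n^{|W_t|},n^{|U_t|}\rangle$; the paper already notes this after \eqref{eq:phi-S-labeled}. The only step needing care is the rational exponents. Border rank is usually phrased with polynomial or Laurent dependence on the parameter, so I would replace $\lambda$ by $\lambda^L$, where $L$ is a common denominator of all $a_t,b_t,c_t$. This makes every exponent an integer without changing any sign, so the limit argument and the rank count are unchanged, and the Bini-style degeneration~\cite{bini1980approximate} applies directly.
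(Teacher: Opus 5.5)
Your proposal is correct and takes the same route as the paper. The paper bounds the rank of the main aggregate and selected corrections by \eqref{eq:correction-rank} for each $\lambda>0$, then uses \eqref{eq:filtered-identity}, where the desired products have coefficient one and every remaining mixed product in $\mathcal J$ has positive degree, to pass to the limit $\lambda\to0$. Your additional remarks, the second inequality from \eqref{eq:partial-correction-budget} and clearing denominators of rational exponents, match the paper's own comments after Definition~\ref{def:integral-grading}.
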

\begin{proof}
By \eqref{eq:correction-rank}, the main aggregate and selected corrections
use at most $\rho_{\mathcal S,\mathcal Q}(n)$ rank-one terms for every
$\lambda>0$. In \eqref{eq:filtered-identity}, each desired product has
coefficient one, and every remaining mixed product has positive degree.
Hence $\Phi_{\mathcal S,\mathcal Q}(\lambda)\to\phi_{\mathcal S}$ as
$\lambda\to0$, giving the border-rank bound.
\end{proof}

The choice $\mathcal Q=\mathcal Q_{\le0}$ satisfies this hypothesis because
every replicated term of nonpositive degree has
\begin{equation}
c_{\mathcal Q_{\le0}}(\mathcal R_{ijk})
=\sum_{P\subseteq\mathcal R_{ijk}}(-1)^{|P|}=0.
\label{eq:degree-correction-condition}
\end{equation}
All remaining replicated terms have positive degree, as do the products
in $\mathcal B(\mathcal S)$ by the grading condition.

\section{Stability and retained precision}
\label{sec:stability}

We now study the approximation error introduced by $\lambda$ and the
sensitivity of the bilinear algorithm to round-off error. The first
depends on the remaining mixed products. For the second, we distinguish
input sensitivity, which depends on the bilinear map, from amplification
of errors in the intermediate vectors, which depends on its factorization.
The product of the three factor norms bounds the latter. We compare this
product with that of classical multiplication and balance approximation
error against round-off error at a fixed working precision.

Bini and Lotti analyzed stability of fast matrix multiplication~\cite{bini1980stability},
and Bini, Lotti, and Romani studied approximate bilinear
algorithms~\cite{bini1980approximate}. Subsequent analyses quantify the
effects of recursion, coefficient magnitudes, and
scaling~\cite{demmel2007stable,ballard2016stability,dai2023nuclear}.
We use the product of spectral norms appearing in the bilinear norm
estimates of Ju and Solomonik~\cite[Theorem~9.1 and Corollary~9.2]{ju2020convolution}
to bound the propagation of intermediate rounding errors.
For centered constructions with distinct part labels, the row Gram
matrices of the factors are block diagonal, with each diagonal block equal to a multiple of the identity matrix. Their spectral norms
therefore follow directly from these scalars, which depend on the
tripartition set, corrections, and parameter exponents.

After grouping the modes into matrix indices, write
\begin{equation}
X^{(t)}\in\mathbb C^{n^{|V_t|}\times n^{|W_t|}},\qquad
Y^{(t)}\in\mathbb C^{n^{|W_t|}\times n^{|U_t|}},\qquad
x=(\vcr X^{(t)})_{t=1}^{m},\quad y=(\vcr Y^{(t)})_{t=1}^{m}.
\end{equation}
The target is the bilinear map
\begin{equation}
\mathcal M(x,y)=\bigl(\vcr(X^{(t)}Y^{(t)})\bigr)_{t=1}^{m}.
\label{eq:target-bilinear-map}
\end{equation}
\begin{sloppypar}
In the following analysis, vector norms are Euclidean, matrix norms are spectral, and
$\|\mathcal L\|=\sup_{\|x\|=\|y\|=1}\|\mathcal L(x,y)\|$ for a
bilinear map $\mathcal L$. The norm inequalities below hold over
$\mathbb R$ and $\mathbb C$; the centering matrix $Q$ is real with
$Q^TQ=I$ and $\mathbf1^TQ=0$.
\end{sloppypar}

\subsection{Approximation error}
\label{sec:approximation-error}

For the selected corrections $\mathcal Q$, the remaining mixed products
$\mathcal J$ of Definition~\ref{def:aggregate-index-sets} contribute to the
approximation error with coefficients \eqref{eq:partial-correction-coefficient}.

For $\mathcal J\ne\emptyset$, let $g>0$ be the smallest value of
$a_i+b_j+c_k$ among $(i,j,k)\in\mathcal J$. In the error analysis we
divide all block exponents by $g$, keeping the correction family fixed.
The resulting tensor at parameter $\lambda$ is the
original tensor in \eqref{eq:partial-filter} at parameter $\lambda^{1/g}$.
Thus we may normalize
the smallest degree among the remaining mixed products to one.

\begin{definition}[Normalized grading and growth exponent]
\label{def:precision-order}
For $\mathcal J\ne\emptyset$, a grading is normalized if
\begin{equation}
\min_{(i,j,k)\in\mathcal J}(a_i+b_j+c_k)=1.
\label{eq:shifted-survivor}
\end{equation}
Its growth exponent is
\begin{equation}
h=-\min_{1\le t\le m}a_t-\min_{1\le t\le m}b_t
  -\min_{1\le t\le m}c_t.
\label{eq:grading-height}
\end{equation}
For exact constructions, we use zero exponents and $h=0$.
\end{definition}

\begin{definition}[Parameter-dependent bilinear map]
\label{def:parameter-dependent-map}
For a normalized grading from Definition~\ref{def:precision-order}, let
\begin{equation}
e_{ijk}=a_i+b_j+c_k\qquad((i,j,k)\in\mathcal J).
\label{eq:residual-exponent}
\end{equation}
For $0<\lambda\le1$, let $z$ stack the variables $Z^{(t)}$ in the output
order of \eqref{eq:target-bilinear-map}. Define $\mathcal M_\lambda$ by
\begin{equation}
\bigl\langle\mathcal M_\lambda(x,y),z\bigr\rangle
=\phi_{\mathcal S}(X,Y,Z)
 +\sum_{(i,j,k)\in\mathcal J}
 c_{\mathcal Q}(\mathcal R_{ijk})\lambda^{e_{ijk}}\phi_{ijk}.
\label{eq:parameter-dependent-map}
\end{equation}
This is the form in \eqref{eq:filtered-identity}, evaluated with the
normalized grading.
\end{definition}

\begin{definition}[Approximation bound]
\label{def:approximation-bound}
For the construction in Definition~\ref{def:parameter-dependent-map}, define
\begin{equation}
\beta(\lambda)=\sum_{(i,j,k)\in\mathcal J}
 |c_{\mathcal Q}(\mathcal R_{ijk})|
 (n+1)^{|\mathcal R_{ijk}|}\lambda^{e_{ijk}},
 \qquad 0\le\lambda\le1.
\label{eq:approximation-bound}
\end{equation}
\end{definition}

\begin{lemma}[Approximation error]
\label{lem:degeneration-conditioning}
For $\mathcal M_\lambda$ from Definition~\ref{def:parameter-dependent-map}
and $0<\lambda\le1$,
\begin{equation}
\|\mathcal M_\lambda-\mathcal M\|\le\beta(\lambda).
\label{eq:bias-conditioning}
\end{equation}
\end{lemma}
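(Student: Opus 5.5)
By \eqref{eq:parameter-dependent-map}, the difference $\mathcal M_\lambda-\mathcal M$ is the bilinear map whose trilinear form is $\sum_{(i,j,k)\in\mathcal J}c_{\mathcal Q}(\mathcal R_{ijk})\lambda^{e_{ijk}}\phi_{ijk}$. The triangle inequality therefore reduces the claim to a single mixed term. It suffices to show that, for each $(i,j,k)\in\mathcal J$, the bilinear map $\mathcal L_{ijk}$ with $\langle\mathcal L_{ijk}(x,y),z\rangle=\phi_{ijk}$ satisfies $\|\mathcal L_{ijk}\|\le(n+1)^{|\mathcal R_{ijk}|}$. We use the dual characterization $\|\mathcal L\|=\sup_{\|x\|=\|y\|=\|z\|=1}|\langle\mathcal L(x,y),z\rangle|$, which holds for the algebraic pairing over $\mathbb R$ and $\mathbb C$ because $|\langle u,z\rangle|\le\|u\|\|z\|$ with equality at $z=\bar u/\|u\|$. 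We also use $\|X^{(i)}\|_F\le\|x\|$, and similarly for $y$ and $z$.

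To bound a single $\phi_{ijk}$, write $\hat X=\mu^{\otimes|U_i^c|}(X^{(i)})$. Since $Q^TQ=I$, the map $\mu^{\otimes}$ is an isometry, so $\|\hat X\|_F=\|X^{(i)}\|_F$; the same holds for $\hat Y$ and $\hat Z$. By \eqref{eq:surviving-form}, $\phi_{ijk}$ is a sum over $\boldsymbol\ell\in\ivl0n^F$ of $\hat X_{\boldsymbol\ell|_{U_i^c}}\hat Y_{\boldsymbol\ell|_{V_j^c}}\hat Z_{\boldsymbol\ell|_{W_k^c}}$. Split the coordinates of $F$ by local multiplicity. Since $(i,j,k)\in\mathcal J$, no coordinate has $\eta_r=2$, so each coordinate is one of three kinds:
\begin{itemize}
\item replicated ($\eta_r=3$): no tensor depends on it, so summing contributes exactly the factor $(n+1)^{|\mathcal R_{ijk}|}$;
\item pairwise ($\eta_r=1$): shared by exactly two tensors;
\item Hadamard ($\eta_r=0$): shared by all three.
\end{itemize}
What remains is a generalized contraction $\sum_{\mathbf a,\mathbf b,\mathbf c,\mathbf h}\hat X_{\mathbf a\mathbf b\mathbf h}\hat Y_{\mathbf b\mathbf c\mathbf h}\hat Z_{\mathbf a\mathbf c\mathbf h}$. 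Here $\mathbf h$ collects the Hadamard indices, and $\mathbf a,\mathbf b,\mathbf c$ collect the pairwise indices shared by each of the three pairs. We need to bound this sum by $\|\hat X\|_F\|\hat Y\|_F\|\hat Z\|_F$.

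This generalized contraction bound is the main step. Fix $\mathbf h$ and view the slices as matrices $X_{\mathbf h}$, $Y_{\mathbf h}$, $Z_{\mathbf h}$. The inner sum equals $\operatorname{tr}(X_{\mathbf h}Y_{\mathbf h}Z_{\mathbf h}^T)$, up to transposition. Cauchy--Schwarz together with submultiplicativity, $\|XY\|_F\le\|X\|_F\|Y\|_F$, bounds it by $\|X_{\mathbf h}\|_F\|Y_{\mathbf h}\|_F\|Z_{\mathbf h}\|_F$. Summing over $\mathbf h$, we then use $\sum_{\mathbf h}\alpha_{\mathbf h}\beta_{\mathbf h}\gamma_{\mathbf h}\le(\sup_{\mathbf h}\alpha_{\mathbf h})\sum_{\mathbf h}\beta_{\mathbf h}\gamma_{\mathbf h}\le\|\alpha\|_2\|\beta\|_2\|\gamma\|_2$. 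This bounds the total by $\|\hat X\|_F\|\hat Y\|_F\|\hat Z\|_F$. The index groups $\mathbf a,\mathbf b,\mathbf c,\mathbf h$ may be empty, in which case the corresponding matrix dimensions are $1$ and the argument is unchanged.

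Combining the pieces gives $|\phi_{ijk}|\le(n+1)^{|\mathcal R_{ijk}|}\|x\|\|y\|\|z\|$. Substituting into the triangle inequality over $\mathcal J$ with the weights $|c_{\mathcal Q}(\mathcal R_{ijk})|\lambda^{e_{ijk}}$ gives exactly $\beta(\lambda)$ of Definition~\ref{def:approximation-bound}. For $0<\lambda\le1$ all powers $\lambda^{e_{ijk}}$ are positive, so no sign issues arise.
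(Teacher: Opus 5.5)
Your proposal is correct and follows essentially the same route as the paper. Both arguments factor out $(n+1)^{|\mathcal R_{ijk}|}$ from the replicated coordinates and bound each Hadamard slice by $|\langle AB,C\rangle|\le\|A\|_F\|B\|_F\|C\|_F$. Both then bound one slice norm by its full tensor norm, apply Cauchy--Schwarz to the other two, use the isometry of $\mu$, and finish with the triangle inequality over $\mathcal J$. Your dual characterization of the bilinear-map norm under the algebraic pairing simply spells out the paper's final ``supremum over unit vectors'' step.
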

\begin{proof}
For $(i,j,k)\in\mathcal J$, set
$\bar X=\mu^{\otimes |U_i^c|}(X^{(i)})$,
$\bar Y=\mu^{\otimes |V_j^c|}(Y^{(j)})$, and
$\bar Z=\mu^{\otimes |W_k^c|}(Z^{(k)})$.
By \eqref{eq:centered-lifting},
$\Gamma_{U_i}(X^{(i)})_{\boldsymbol\ell}=\bar X_{\boldsymbol\ell|_{U_i^c}}$,
and similarly for $Y,Z$. Coordinates in $\mathcal R_{ijk}$ contribute
$n+1$ each. Group the coordinates retained by all three tensors into
$\mathbf u$, and the coordinates retained by each pair into
$\mathbf a,\mathbf b,\mathbf c$. After ordering the modes accordingly,
\begin{equation}
\phi_{ijk}=(n+1)^{|\mathcal R_{ijk}|}
 \sum_{\mathbf u,\mathbf a,\mathbf b,\mathbf c}
 \bar X_{\mathbf u,\mathbf a,\mathbf b}
 \bar Y_{\mathbf u,\mathbf b,\mathbf c}
 \bar Z_{\mathbf u,\mathbf a,\mathbf c}.
\label{eq:mixed-form-expansion}
\end{equation}
For each matrix slice, use
$|\langle AB,C\rangle|\le\|A\|_F\|B\|_F\|C\|_F$.
Bound one slice norm by its full tensor norm and apply Cauchy--Schwarz
to the other two slice norms. The isometry of $\mu$ then gives
\begin{equation}
|\phi_{ijk}|\le(n+1)^{|\mathcal R_{ijk}|}
 \|X^{(i)}\|_F\|Y^{(j)}\|_F\|Z^{(k)}\|_F.
\label{eq:mixed-form-norm}
\end{equation}
For $0<\lambda\le1$, apply the triangle inequality to
\eqref{eq:parameter-dependent-map} and bound each block norm by the norm
of its stacked vector. Taking the supremum over unit vectors gives
$\|\mathcal M_\lambda-\mathcal M\|\le\beta(\lambda)$.
\end{proof}

\subsection{Round-off analysis of the bilinear algorithms}
\label{sec:coefficient-growth}

The approximation bound does not account for errors made while evaluating
the algorithm in finite precision. We distinguish sensitivity to the
input data from the additional errors introduced by the factorization.

A bilinear algorithm is represented by a triple of factor matrices
$\Xi=(A,B,C)$ with $r$ columns. Writing their columns as $a_\ell,b_\ell,c_\ell$,
its evaluation is
\begin{equation}
z=C\bigl((A^Tx)\odot(B^Ty)\bigr)
 =\sum_{\ell=1}^r c_\ell(a_\ell^Tx)(b_\ell^Ty).
\label{eq:cp-evaluation}
\end{equation}
Here $\odot$ denotes entrywise multiplication: $(u\odot v)_\ell=u_\ell v_\ell$.
\begin{definition}[Spectral-norm product]
\label{def:spectral-product}
For a bilinear algorithm $\Xi=(A,B,C)$, define
\begin{equation}
\sigma(\Xi)=\|A\|_2\|B\|_2\|C\|_2.
\label{eq:coefficient-growth}
\end{equation}
\end{definition}

The factorization specifies two linear input maps, the entrywise product,
and a linear output map. It does not prescribe how the linear maps are
applied. Their implementations may use different summation orders,
compensated summation, or factorizations of the coefficient matrices
rather than direct matrix--vector multiplication. These choices can affect
round-off error~\cite{ballard2016stability,ju2020convolution}.

We analyze the propagation of errors in the inputs and outputs of these
stages. Specifically, we hold $A,B,C$ exact and model each stage as an
exact operation followed by rounding its result at the working precision.
The errors in the intermediate vectors $A^Tx$, $B^Ty$, and their entrywise
product depend on the chosen factorization. In contrast, perturbing the
original inputs and then evaluating exactly depends only on the bilinear
map, and rounding the final result is relative to that result.

The next lemma shows how these norms bound errors introduced in the
intermediate vectors. It keeps the contribution of the original input
perturbations separate, using the norm of the bilinear map rather than
replacing it by $\sigma(\Xi)$. The latter replacement also gives a valid
upper bound, as in~\cite[Theorem~9.1]{ju2020convolution}, but does not
distinguish input sensitivity from errors caused by the algorithm.

\begin{lemma}[Input sensitivity and intermediate round-off]
\label{lem:rounded-bilinear}
Let $\Xi=(A,B,C)$ evaluate the bilinear map $\mathcal L$ in
\eqref{eq:cp-evaluation}, and let $z=\mathcal L(x,y)$. For $0<\epsilon<1$,
suppose
\begin{equation}
\widehat x=x+\delta x,\quad \widehat y=y+\delta y,\qquad
\|\delta x\|_2\le\epsilon\|x\|_2,\quad
\|\delta y\|_2\le\epsilon\|y\|_2.
\label{eq:input-perturbations}
\end{equation}
Let $\operatorname{fl}$ satisfy
$\|\operatorname{fl}(v)-v\|_2\le\epsilon\|v\|_2$ for every rounded vector,
and compute
\begin{equation}
\widehat u=\operatorname{fl}(A^T\widehat x),\quad
\widehat v=\operatorname{fl}(B^T\widehat y),\quad
\widehat w=\operatorname{fl}(\widehat u\odot\widehat v),\quad
\widehat z=\operatorname{fl}(C\widehat w).
\label{eq:rounded-evaluation}
\end{equation}
Then
\begin{equation}
\|\widehat z-z\|_2
\le\bigl((1+\epsilon)^3-1\bigr)
 \bigl[\|\mathcal L\|+(1+\epsilon)^3\sigma(\Xi)\bigr]
 \|x\|_2\|y\|_2.
\label{eq:factor-perturbation}
\end{equation}
\end{lemma}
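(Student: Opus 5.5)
The plan is to split the total error into the error in the final rounded output plus the error already present in $C\widehat w$, and to write every intermediate quantity as an exact value plus a relative perturbation.

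\textbf{Step 1: the perturbed intermediate vectors.} Set $u=A^Tx$, $v=B^Ty$, $w=u\odot v$, so $z=Cw$. Write
\[
\widehat u=u+\delta u,\qquad \delta u=A^T\delta x+(\widehat u-A^T\widehat x).
\]
Then
\[
\|\delta u\|_2\le \|A\|_2\epsilon\|x\|_2+\epsilon(1+\epsilon)\|A\|_2\|x\|_2=((1+\epsilon)^2-1)\|A\|_2\|x\|_2 .
\]
The same bound holds for $\delta v$ with $B$ and $y$.

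\textbf{Step 2: the input-sensitivity part.} The key is not to bound $\|\widehat z-z\|$ purely through the factors, since that would lose the separate $\|\mathcal L\|$ term. Instead, split
\[
C\widehat w - z=\bigl[\mathcal L(\widehat x,\widehat y)-\mathcal L(x,y)\bigr]+C\bigl(\widehat w-(A^T\widehat x)\odot(B^T\widehat y)\bigr).
\]
The first bracket equals $\mathcal L(\delta x,y)+\mathcal L(x,\delta y)+\mathcal L(\delta x,\delta y)$. Its norm is at most
\[
(2\epsilon+\epsilon^2)\|\mathcal L\|\,\|x\|_2\|y\|_2=((1+\epsilon)^2-1)\|\mathcal L\|\,\|x\|_2\|y\|_2,
\]
which is bounded by $((1+\epsilon)^3-1)\|\mathcal L\|\,\|x\|_2\|y\|_2$.

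\textbf{Step 3: the factorization part.} Let $u'=A^T\widehat x$ and $\widehat u=u'+e_u$ with $\|e_u\|\le\epsilon\|u'\|$, and similarly $v'$, $e_v$. Then
\[
\widehat w-u'\odot v' = e_u\odot v'+u'\odot e_v+e_u\odot e_v+e_w,
\]
where $\|e_w\|\le\epsilon\|\widehat u\odot\widehat v\|$. Use $\|a\odot b\|_2\le\|a\|_2\|b\|_2$ together with
\[
\|u'\|\le(1+\epsilon)\|A\|\,\|x\|,\qquad \|\widehat u\|\le(1+\epsilon)^2\|A\|\,\|x\|,
\]
and the analogous bounds for $v'$, $\widehat v$. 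Applying $\|C\|_2$ then gives a bound of the form $\sigma(\Xi)\|x\|\|y\|$ times a polynomial in $\epsilon$. That polynomial is $(1+\epsilon)^2[(1+\epsilon)^2-1]+\epsilon(1+\epsilon)^4$, which is bounded by $(1+\epsilon)^2((1+\epsilon)^3-1)$.

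\textbf{Step 4: the final rounding.} Finally,
\[
\|\widehat z-C\widehat w\|\le\epsilon\|C\widehat w\|\le\epsilon\bigl(\|z\|+\|C\widehat w-z\|\bigr),
\]
with $\|z\|\le\|\mathcal L\|\,\|x\|\|y\|$. Combining gives
\[
\|\widehat z-z\|\le\epsilon\|z\|+(1+\epsilon)\|C\widehat w-z\|.
\]
Inserting the bounds from Steps 2 and 3 should then collapse to $((1+\epsilon)^3-1)\bigl[\|\mathcal L\|+(1+\epsilon)^3\sigma(\Xi)\bigr]$.

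\textbf{Main obstacle.} The hardest part is the bookkeeping of the $\epsilon$-polynomials so that they fit the stated constants exactly. On the $\|\mathcal L\|$ side this is $\epsilon+(1+\epsilon)((1+\epsilon)^2-1)=(1+\epsilon)^3-1$, which works exactly. On the $\sigma$ side, $(1+\epsilon)^3((1+\epsilon)^3-1)$ leaves room once the factor $(1+\epsilon)$ from the final rounding is included. If the crude estimates overshoot, I would regroup by writing $\widehat w=(1+\cdot)$-perturbations multiplicatively, bounding $\|\widehat u\odot\widehat v-u'\odot v'\|$ by $((1+\epsilon)^2-1)\|u'\|\|v'\|$ before the product rounding.
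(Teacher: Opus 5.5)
Your proposal is correct and is essentially the paper's proof. It uses the same split of $C\widehat w-z$ into $\mathcal L(\widehat x,\widehat y)-\mathcal L(x,y)$, kept at $(2\epsilon+\epsilon^2)\|\mathcal L\|$ rather than weakened, and $C\bigl(\widehat w-(A^T\widehat x)\odot(B^T\widehat y)\bigr)$ with the identical four-term expansion, followed by $\|\widehat z-z\|\le\epsilon\|z\|+(1+\epsilon)\|C\widehat w-z\|$; the constants match exactly, with no slack, since your $\sigma$-side polynomial equals $(1+\epsilon)^2((1+\epsilon)^3-1)$ and your Step~1 is never used and can be dropped.
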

\begin{proof}
First fix the supplied inputs $\widehat x,\widehat y$. Set
$u=A^T\widehat x$, $v=B^T\widehat y$,
$\delta u=\widehat u-u$, $\delta v=\widehat v-v$, and
$\delta w=\widehat w-\widehat u\odot\widehat v$. Then
\begin{equation}
\widehat w-u\odot v
 =\delta u\odot v+u\odot\delta v+\delta u\odot\delta v+\delta w.
\label{eq:intermediate-error-expansion}
\end{equation}
The rounding bounds give
$\|\delta u\|_2\le\epsilon\|u\|_2$,
$\|\delta v\|_2\le\epsilon\|v\|_2$, and
$\|\delta w\|_2\le\epsilon(1+\epsilon)^2\|u\|_2\|v\|_2$.
Using $\|u\odot v\|_2\le\|u\|_2\|v\|_2$ yields
\begin{align}
\|C\widehat w-\mathcal L(\widehat x,\widehat y)\|_2
&\le \|C\|_2\bigl(2\epsilon+\epsilon^2+
       \epsilon(1+\epsilon)^2\bigr)\|u\|_2\|v\|_2\notag\\
&\le\bigl((1+\epsilon)^3-1\bigr)
 \|C\|_2\|A\|_2\|B\|_2\|\widehat x\|_2\|\widehat y\|_2.
\label{eq:intermediate-roundoff}
\end{align}
This is the error introduced by the three intermediate roundings.
The original input perturbations instead satisfy, by bilinearity,
\begin{equation}
\|\mathcal L(\widehat x,\widehat y)-\mathcal L(x,y)\|_2
\le(2\epsilon+\epsilon^2)\|\mathcal L\|\|x\|_2\|y\|_2.
\label{eq:intrinsic-input-error}
\end{equation}
Finally, rounding $C\widehat w$ and adding and subtracting
$\mathcal L(\widehat x,\widehat y)$ gives
\begin{align}
\|\widehat z-z\|_2
&\le(1+\epsilon)\|C\widehat w-\mathcal L(\widehat x,\widehat y)\|_2\notag\\
&\quad +(1+\epsilon)\|\mathcal L(\widehat x,\widehat y)-z\|_2
       +\epsilon\|z\|_2.
\label{eq:final-roundoff-separation}
\end{align}
Use $\|\widehat x\|_2\|\widehat y\|_2\le
(1+\epsilon)^2\|x\|_2\|y\|_2$ and
$\|z\|_2\le\|\mathcal L\|\|x\|_2\|y\|_2$.
The coefficient of $\|\mathcal L\|$ is
$(1+\epsilon)(2\epsilon+\epsilon^2)+\epsilon=(1+\epsilon)^3-1$,
which proves the bound.
\end{proof}
To first order, the two contributions to the bound are
$3\epsilon\|\mathcal L\|\|x\|_2\|y\|_2$ and
$3\epsilon\sigma(\Xi)\|x\|_2\|y\|_2$, respectively.
With exact original inputs and no rounding of the final output, only
\eqref{eq:intermediate-roundoff} is needed. The assumptions exclude
overflow and underflow and also apply to complex vectors with the stated
normwise bounds.

Within this error model, the intermediate-error bound is sharp in general,
but need not have the
best constant for a particular factorization. To see the possible
amplification, choose unit inputs with
$|a_\ell^Tx|=\|a_\ell\|_2$ and $|b_\ell^Ty|=\|b_\ell\|_2$,
and allow a relative error of magnitude $\epsilon$ in only product
$\ell$. With the other operations exact, the resulting error is
\begin{equation}
\|\delta z\|_2
=\epsilon\|a_\ell\|_2\|b_\ell\|_2\|c_\ell\|_2.
\label{eq:single-product-roundoff}
\end{equation}
For one rank-one term this product equals $\sigma(\Xi)$; allowing all
three intermediate roundings with the same sign attains the coefficient
$(1+\epsilon)^3-1$. Section~\ref{sec:aggregation-factor-norms} establishes
sharpness of the parameter-growth exponent for the centered constructions.

For the target map $\mathcal M$ of \eqref{eq:target-bilinear-map},
$\|\mathcal M\|=1$. Indeed,
\begin{equation}
\|\mathcal M(x,y)\|_2^2
=\sum_{t=1}^m\|X^{(t)}Y^{(t)}\|_F^2
\le\sum_{t=1}^m\|X^{(t)}\|_F^2\|Y^{(t)}\|_F^2
\le\|x\|_2^2\|y\|_2^2,
\label{eq:target-map-norm}
\end{equation}
and equality is attained by inputs selecting a single scalar product.
For $\mathcal M_\lambda$, Lemma~\ref{lem:degeneration-conditioning} gives
$\|\mathcal M_\lambda\|\le1+\beta(\lambda)$. Thus the input-sensitivity
term stays bounded as $\lambda\to0$, even when the intermediate-error
bound grows.

\begin{lemma}[Classical spectral-norm product]
\label{lem:classical-spectral-product}
For classical evaluation of the independent products in
\eqref{eq:target-bilinear-map}, the spectral-norm product is
\begin{equation}
\sigma_c=n^{(\max_t|U_t|+\max_t|V_t|+\max_t|W_t|)/2}.
\label{eq:classical-sigma}
\end{equation}
For a balanced family $\mathcal S\subseteq B_{N,N,N}$, the matrix
dimension is $q=n^N$, so $\sigma_c=q^{3/2}=n^{3N/2}$, independently of
the number of products.
\end{lemma}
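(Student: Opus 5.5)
The plan is to write the classical algorithm explicitly as a triple of factor matrices $\Xi_c=(A,B,C)$ and compute each spectral norm through its row Gram matrix. For product $t$ the classical algorithm has one rank-one term per full index $\mathbf i\in\ivl{0}{n-1}^F$. The term multiplies $X^{(t)}_{\mathbf i|_{U_t^c}}$ by $Y^{(t)}_{\mathbf i|_{V_t^c}}$ and adds the result into $Z^{(t)}_{\mathbf i|_{W_t^c}}$. Every column of $A$, $B$, $C$ is therefore a standard basis vector of the corresponding stacked variable space. The columns used by different products touch disjoint variable blocks, so each factor matrix is block diagonal over $t$.

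First I compute $AA^T$. Its rows are indexed by the entries of $x$. Since all columns are $0/1$ unit vectors, $AA^T$ is diagonal. Its entry at a variable $X^{(t)}_{\boldsymbol\alpha}$ counts the full indices $\mathbf i$ with $\mathbf i|_{U_t^c}=\boldsymbol\alpha$. The only free coordinates are those in $U_t$, which gives $n^{|U_t|}$. In matrix language this is the inner dimension of $X^{(t)}Y^{(t)}$: each entry of $X^{(t)}$ is used once per column of $Y^{(t)}$. Hence
\[
\|A\|_2=\max_t n^{|U_t|/2}.
\]
Likewise, each $Y^{(t)}$ entry is used $n^{|V_t|}$ times. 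Each $Z^{(t)}$ entry (row of $C$) receives $n^{|W_t|}$ contributions, because $CC^T$ is diagonal with these counts. This gives $\|B\|_2=\max_t n^{|V_t|/2}$ and $\|C\|_2=\max_t n^{|W_t|/2}$.

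Multiplying the three norms gives $\sigma_c=n^{(\max_t|U_t|+\max_t|V_t|+\max_t|W_t|)/2}$. The maxima are taken separately because the block-diagonal structure makes each spectral norm the maximum over blocks independently. For $\mathcal S\subseteq B_{N,N,N}$ every part has size $N$, so $\sigma_c=n^{3N/2}=q^{3/2}$. This value does not depend on $m$.

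The only point needing care is the role assignment: the norm of $A$ must be governed by $|U_t|$, the coordinates omitted from $X^{(t)}$. I will check this directly from the trilinear form \eqref{eq:phi-S-labeled} rather than from the $\langle n^{|V_t|},n^{|W_t|},n^{|U_t|}\rangle$ labels, to avoid permuting the exponents. The exponent of a variable's multiplicity is always the number of free coordinates when that variable's own indices are fixed, which is the omitted set. A second point is that "classical evaluation" must mean this canonical factorization with unit coefficients. I would state that convention and note that the diagonal Gram structure makes the computation exact.
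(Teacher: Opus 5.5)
Your proposal is correct and matches the paper's proof: both write the classical factors with standard-basis columns, read off block-diagonal row Gram matrices $\bigoplus_t n^{|U_t|}I$, $\bigoplus_t n^{|V_t|}I$, $\bigoplus_t n^{|W_t|}I$ from the usage counts, and take the square root of the largest coefficient in each. One small slip: $n^{|U_t|}$ is the number of columns of $Y^{(t)}$, not the inner dimension of $X^{(t)}Y^{(t)}$. The inner dimension is $n^{|W_t|}$, which is the count you correctly use for $C$, and your index-counting argument for $A$ is unaffected.
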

\begin{proof}
Write $\Xi=(A,B,C)$ for the classical algorithm. Its columns are
indexed by the scalar products
$X^{(t)}_{\alpha\beta}Y^{(t)}_{\beta\gamma}$: the column of $A$ selects
$X^{(t)}_{\alpha\beta}$, that of $B$ selects
$Y^{(t)}_{\beta\gamma}$, and that of $C$ adds the product to output
entry $(\alpha,\gamma)$ of block $t$. Every column is a standard basis
vector. Each input entry of $X^{(t)}$ is used $n^{|U_t|}$ times, each
input entry of $Y^{(t)}$ is used $n^{|V_t|}$ times, and each output entry
accumulates $n^{|W_t|}$ products. Consequently,
\begin{equation}
AA^*=\bigoplus_{t=1}^m n^{|U_t|}I,\qquad
BB^*=\bigoplus_{t=1}^m n^{|V_t|}I,\qquad
CC^*=\bigoplus_{t=1}^m n^{|W_t|}I,
\end{equation}
with identity blocks of the corresponding input or output dimensions.
Taking the square root of the largest coefficient in each matrix gives
\eqref{eq:classical-sigma}.
\end{proof}

The ratio $\sigma(\Xi)/\sigma_c$ compares the bounds for intermediate
rounding errors under the same assumptions.
Tensor products multiply $\sigma(\Xi)$,
so $k$ recursive uses of a fixed square base raise its normalized
ratio to the $k$th power.

\subsection{Factor norms of the aggregation constructions}
\label{sec:aggregation-factor-norms}

The construction of Definition~\ref{def:parameter-dependent-map} has
factor matrices
\begin{equation}
\Xi(\lambda)=(A(\lambda),B(\lambda),C(\lambda)),\qquad
\mathcal M_\lambda(x,y)
=C(\lambda)\bigl((A(\lambda)^Tx)\odot(B(\lambda)^Ty)\bigr).
\label{eq:parameterized-algorithm}
\end{equation}

We now compute the spectral-norm product of
Definition~\ref{def:spectral-product} from the row Gram matrices
$A(\lambda) A(\lambda)^*$, $B(\lambda) B(\lambda)^*$, and
$C(\lambda) C(\lambda)^*$. Each is block diagonal in the original tensor
blocks, and each diagonal block is a scalar multiple of the identity.
For a balanced tripartition set without corrections or parameter
scaling, all these scalars equal $(n+1)^N$. Corrections and parameter
exponents change the scalars while preserving this diagonal structure.
Thus the spectral norms are the square roots of the largest block
coefficients.

Recall the selected correction family $\mathcal Q$ from
Theorem~\ref{thm:graded}. The main aggregate has coefficient
one, and the correction indexed by $P\in\mathcal Q$ has coefficient
$(-1)^{|P|}(n+1)^{|P|}$. We distribute $(n+1)^{|P|/3}$ to each of its
three factors and put the sign in $C(\lambda)$. The blocks use the
scalings $\lambda^{a_t},\lambda^{b_t},\lambda^{c_t}$, respectively.
Uniform reciprocal scalings of the three factor matrices leave
$\sigma(\Xi(\lambda))$ unchanged, although scaling different columns
independently can change it. This distribution agrees with the weighted expansion
\eqref{eq:weighted-inclusion-exclusion}, up to column signs.
The part projections are injective for conflict-free families by
Lemma~\ref{lem:conflict-injective}, and for graded families by
Lemma~\ref{lem:kcolor-injective}.

\begin{proposition}[Spectral norms of the centered factors]
\label{prop:spectral-factors}
For a set of complete tripartitions with injective part projections
and the factors of \eqref{eq:parameterized-algorithm}, the row Gram matrices
are block diagonal, with each diagonal block equal to a multiple of the
identity matrix. Their spectral norms satisfy
\begin{align}
\|A(\lambda)\|_2^2
 &=\max_{1\le t\le m}\lambda^{2a_t}(n+1)^{|U_t|}
       \left(1+\sum_{\substack{P\in\mathcal Q\\P\subseteq U_t}}
          (n+1)^{-|P|/3}\right),\notag\\
\|B(\lambda)\|_2^2
 &=\max_{1\le t\le m}\lambda^{2b_t}(n+1)^{|V_t|}
       \left(1+\sum_{\substack{P\in\mathcal Q\\P\subseteq V_t}}
          (n+1)^{-|P|/3}\right),\notag\\
\|C(\lambda)\|_2^2
 &=\max_{1\le t\le m}\lambda^{2c_t}(n+1)^{|W_t|}
       \left(1+\sum_{\substack{P\in\mathcal Q\\P\subseteq W_t}}
          (n+1)^{-|P|/3}\right).
\label{eq:spectral-factor-norms}
\end{align}
\end{proposition}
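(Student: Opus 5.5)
I would compute $A(\lambda)A(\lambda)^*$ directly from the explicit columns; $B$ and $C$ are symmetric. The columns of $A(\lambda)$ are indexed by the evaluated aggregates, namely the main aggregate $P=\emptyset$ and each $P\in\mathcal Q$, together with a point $\mathbf j\in\ivl{0}{n}^{F\setminus P}$. The column $(P,\mathbf j)$ is the linear form
\[
x\mapsto (n+1)^{|P|/3}\sum_{t:\,P\subseteq U_t}\lambda^{a_t}\Gamma_{U_t}^P(X^{(t)})_{\mathbf j}.
\]
Its restriction to block $t$ is nonzero only when $P\subseteq U_t$. In that case it equals $(n+1)^{|P|/3}\lambda^{a_t}$ times the row vector $\boldsymbol\alpha\mapsto\prod_{r\in F\setminus U_t}Q_{j_r\alpha_r}$, and this vector does not depend on the coordinates $j_r$ with $r\in U_t\setminus P$.

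\textbf{Block $(t,t)$ of the Gram matrix.} The entry $(\boldsymbol\alpha,\boldsymbol\alpha')$ of the contribution from a single $P\subseteq U_t$ is
\[
(n+1)^{2|P|/3}\lambda^{2a_t}\sum_{\mathbf j}\prod_{r\in U_t^c}Q_{j_r\alpha_r}Q_{j_r\alpha'_r}.
\]
The sum over $j_r$ for $r\in U_t^c$ gives $\delta_{\boldsymbol\alpha\boldsymbol\alpha'}$ because $Q^TQ=I_n$. The $j_r$ with $r\in U_t\setminus P$ are free and contribute $(n+1)^{|U_t|-|P|}$. The contribution is therefore $\lambda^{2a_t}(n+1)^{|U_t|}(n+1)^{-|P|/3}I$. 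Summing over $P\in\{\emptyset\}\cup\mathcal Q$ with $P\subseteq U_t$ gives the scalar in \eqref{eq:spectral-factor-norms}.

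\textbf{Block $(t,t')$ with $t\ne t'$.} This block is the main obstacle, since the whole statement depends on these blocks vanishing. By injectivity of the projections, $U_t\ne U_{t'}$, and both are full parts of complete tripartitions. Pick a coordinate $r$ lying in exactly one of the two sets; the asymmetric difference is nonempty. Say $r\in U_{t'}\setminus U_t$. Then $r\notin P$ for every common $P\subseteq U_t\cap U_{t'}$, so $j_r$ is summed in every contributing aggregate. The block-$t$ factor carries $Q_{j_r\alpha_r}$, while the block-$t'$ factor is constant in $j_r$. Summing over $j_r$ gives $1_{n+1}^TQe_{\alpha_r}=0$, so each aggregate contributes zero and the off-diagonal block vanishes. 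The case $r\in U_t\setminus U_{t'}$ is symmetric.

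\textbf{Conclusion.} The Gram matrix is block diagonal with scalar multiples of the identity on the diagonal. Its spectral norm is the largest coefficient, and $\|A\|_2^2=\|AA^*\|_2$ gives the formula. For $C(\lambda)$ the signs $(-1)^{|P|}$ appear squared in the Gram computation and so do not matter. The same argument with $V_t$, $b_t$ and with $W_t$, $c_t$ completes the proof.
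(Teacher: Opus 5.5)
Your proof is correct and follows essentially the same route as the paper. Diagonal blocks come from $Q^TQ=I_n$ together with the $(n+1)^{|U_t|-|P|}$ free coordinates, scaled by $(n+1)^{2|P|/3}\lambda^{2a_t}$; off-diagonal blocks vanish by choosing a coordinate in the symmetric difference of $U_t$ and $U_{t'}$ (your ``asymmetric difference'' should read ``symmetric difference'') and using the zero column sums of $Q$.
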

\begin{proof}
For $P=\emptyset$ or $P\in\mathcal Q$ with $P\subseteq U_t$, the unscaled lifting applies $Q$ in
$U_t^c$ and is constant in $U_t\setminus P$. In its Gram matrix, each
retained coordinate gives $Q^TQ=I$, while each constant coordinate gives
$\mathbf1^T\mathbf1=n+1$. Hence this matrix is
$(n+1)^{|U_t|-|P|}I$.

To compute a cross block for distinct $U_t,U_s$ with
$P\subseteq U_t\cap U_s$, choose a coordinate in their symmetric
difference. Interchanging $s,t$ if necessary, let this coordinate be
$r\in U_t\setminus U_s$. It is not deleted by $P$. The $t$th lifting
is constant in that coordinate, whereas the $s$th lifting uses a column
of $Q$. The inner product therefore contains the factor
\begin{equation}
\sum_{j_r=0}^{n}1\cdot Q_{j_r b}=0
\qquad\text{for every column }b,
\end{equation}
since the columns of $Q$ have zero sum. Thus every cross block vanishes.

Multiplying each lifting by $(n+1)^{|P|/3}\lambda^{a_t}$ makes its
Gram block equal to
$(n+1)^{|U_t|-|P|/3}\lambda^{2a_t}I$.
The main and correction aggregates occupy disjoint column blocks of
$A(\lambda)$, so their contributions add to give
\begin{equation}
A(\lambda) A(\lambda)^*
=\bigoplus_{t=1}^{m}
\lambda^{2a_t}(n+1)^{|U_t|}
\left(1+\sum_{\substack{P\in\mathcal Q\\P\subseteq U_t}}
 (n+1)^{-|P|/3}\right)I_{n^{d-|U_t|}}.
\end{equation}
Taking the largest scalar block coefficient gives the first equality.
The same argument gives the other two.
\end{proof}

For a balanced tripartition set $\mathcal S\subseteq B_{N,N,N}$, each part has size $N$ and $q=n^N$.
An ungraded, correction-free aggregate has
$A(1)A(1)^*=B(1)B(1)^*=C(1)C(1)^*=(n+1)^NI$.
The weighted embedding instead has Gram blocks
$(n+1+(n+1)^{2/3})^NI$, since the weight vector of
Definition~\ref{def:weight-vector} has squared norm $n+1+(n+1)^{2/3}$.
For any selected correction family, we bound the sum over its correction
subsets by the sum over all subsets. For any finite set $E$ and scalar $z$,
grouping subsets by their cardinality and applying the binomial theorem gives
\begin{equation}
\sum_{P\subseteq E}z^{|P|}
=\sum_{s=0}^{|E|}\binom{|E|}{s}z^s
=(1+z)^{|E|}.
\label{eq:subset-binomial}
\end{equation}
The coefficient $\binom{|E|}{s}$ counts the subsets of size $s$.
Applying \eqref{eq:subset-binomial} with $E=U_t$, $|U_t|=N$, and
$z=(n+1)^{-1/3}$, and including the main aggregate separately, yields
\begin{equation}
1\le 1+\sum_{\substack{P\in\mathcal Q\\P\subseteq U_t}}(n+1)^{-|P|/3}
\le\sum_{P\subseteq U_t}(n+1)^{-|P|/3}=(1+(n+1)^{-1/3})^N.
\label{eq:correction-subset-bound}
\end{equation}
The same calculation applies to $V_t,W_t$. Thus every exact construction
from a balanced tripartition set satisfies
\begin{equation}
\left(\frac{n+1}{n}\right)^{3N/2}
\le\frac{\sigma(\Xi(1))}{\sigma_c}
\le\left(\frac{n+1+(n+1)^{2/3}}{n}\right)^{3N/2},
\qquad
\lim_{n\to\infty}\frac{\sigma(\Xi(1))}{\sigma_c}=1
\quad(N\text{ fixed}).
\label{eq:exact-sigma-ratio}
\end{equation}
The upper bound equals $\sigma(\Xi(1))/\sigma_c$ for the weighted embedding
of Definition~\ref{def:weight} applied to the same tripartition set
$\mathcal S$. The weighted embedding uses local dimension $n+2$, whereas the
selected aggregates use dimension $n+1$ with only the corrections indexed
by $\mathcal Q$.
By \eqref{eq:weighted-inclusion-exclusion}, the corrected factors retain
selected column blocks of the weighted factors, up to column signs.
Deleting the other blocks reduces the multiplication count and cannot
increase their spectral norms, whether the resulting construction is
exact or approximate. For an exact construction from complete
tripartitions of unequal shapes, the same argument replaces $3N$ by
$\max_t|U_t|+\max_t|V_t|+\max_t|W_t|$.

The smallest exponent in each factor determines its growth as
$\lambda\to0$. Their sum is $-h$ from \eqref{eq:grading-height}, so the
spectral-norm product grows as $\lambda^{-h}$. The parameter choice also
requires its leading coefficient.
\begin{definition}[Leading spectral coefficient]
\label{def:leading-sigma}
For the factorization $\Xi(\lambda)$ and growth exponent $h$, define the constant
$\sigma_0$, when the limit exists and is positive, by
\begin{equation}
\sigma_0=\lim_{\lambda\to0}\lambda^h\sigma(\Xi(\lambda)).
\label{eq:leading-sigma}
\end{equation}
\end{definition}
For the first maximum in \eqref{eq:spectral-factor-norms}, denote by
$K_{A,t}$ the coefficient of $\lambda^{2a_t}$, and set
$a_* = \min_t a_t$ and $K_A=\max_{a_t=a_*}K_{A,t}$, with
$K_B,K_C$ defined analogously. Then
\begin{equation}
\lambda^{-2a_*}\|A(\lambda)\|_2^2
=\max_t K_{A,t}\lambda^{2(a_t-a_*)}=K_A
\label{eq:small-parameter-spectral-max}
\end{equation}
for all sufficiently small positive $\lambda$, as every term with $a_t>a_*$
tends to zero and is eventually smaller than the positive constant $K_A$.
Taking square roots gives $\|A(\lambda)\|_2=\sqrt{K_A}\lambda^{a_*}$,
and multiplying the three norm identities yields
\begin{equation}
\sigma(\Xi(\lambda))=\sigma_0\lambda^{-h},\qquad
\sigma_0=(K_AK_BK_C)^{1/2}
\label{eq:exact-small-parameter-sigma}
\end{equation}
for all sufficiently small positive $\lambda$, with the construction and $n$
fixed. 

For these factors, the exponent $h$ also describes attainable
amplification of intermediate rounding errors under the stated model.
Fix a column $\ell$ belonging to the main aggregate. In its $t$th input
block, the column of $A(\lambda)$ has norm
$\lambda^{a_t}(n/(n+1))^{|U_t^c|/2}$, since
$QQ^T=I-1_{n+1}1_{n+1}^T/(n+1)$; the corresponding formulas hold for
$B(\lambda)$ and $C(\lambda)$. Each block is nonzero, so
\begin{equation}
0<\lim_{\lambda\to0}\lambda^h
 \|a_\ell(\lambda)\|_2\|b_\ell(\lambda)\|_2\|c_\ell(\lambda)\|_2<\infty.
\label{eq:sharp-growth-column}
\end{equation}
By \eqref{eq:single-product-roundoff}, rounding just this product can
therefore cause an error proportional to $\epsilon\lambda^{-h}$ for
suitable unit inputs. Thus the power of $\lambda$ in the bound is sharp
for fixed construction and local dimension; this does not assert that
the bound's constant is attained for every factorization or input.

In the balanced case,
\begin{equation}
\left(\frac{n+1}{n}\right)^{3N/2}
\le \frac{\sigma(\Xi(\lambda))}{\sigma_c\lambda^{-h}}
\le \left(\frac{n+1+(n+1)^{2/3}}{n}\right)^{3N/2}.
\label{eq:normalized-sigma-ratio}
\end{equation}
The dimension-dependent factor therefore tends to one; the parameter
loss is isolated in $\lambda^{-h}$. Formula~\eqref{eq:spectral-factor-norms}
also evaluates the complete-tripartition constructions with unequal part sizes.
The incomplete big-CW restrictions use the Gram calculation in
Appendix~\ref{apx:incomplete-sigma}.

\subsection{Parameter choice and retained accuracy}
\label{sec:parameter-choice}

Making $\lambda$ smaller reduces the approximation error but can increase
round-off amplification in the intermediate vectors. With smallest
remaining degree one, these contributions have orders $\lambda$ and
$\epsilon\lambda^{-h}$. The contribution independent of the factorization is of order
$\epsilon$. Thus for $h>0$, the balance determining the limiting loss of precision is between
the approximation and intermediate-rounding errors. All limits in this
subsection keep the construction, local dimension, and input vectors fixed.

\begin{definition}[Fraction of retained accuracy]
\label{def:retained-accuracy}
For $x,y$ with $\mathcal M(x,y)\ne0$ and a parameter choice
$\lambda=\lambda(\epsilon)\in(0,1]$, define the relative-error bounds
\begin{align}
e(\epsilon,x,y)
 &=\frac{\|x\|_2\|y\|_2}{\|\mathcal M(x,y)\|_2}
 \Bigl[(1+\epsilon)^3\beta(\lambda)
 +\bigl((1+\epsilon)^3-1\bigr)
       \bigl(1+(1+\epsilon)^3\sigma(\Xi(\lambda))\bigr)\Bigr],
 \label{eq:retained-precision-error}\\
e_c(\epsilon,x,y)
 &=\frac{\|x\|_2\|y\|_2}{\|\mathcal M(x,y)\|_2}
 \bigl((1+\epsilon)^3-1\bigr)
       \bigl(1+(1+\epsilon)^3\sigma_c\bigr).
 \label{eq:classical-reference-error}
\end{align}
For $0<e(\epsilon,x,y),e_c(\epsilon,x,y)<1$, the fraction of
classical-reference accuracy retained by the bound is
\begin{equation}
\theta(\epsilon,x,y)=
\frac{\log(1/e(\epsilon,x,y))}{\log(1/e_c(\epsilon,x,y))}.
\label{eq:finite-precision-fraction}
\end{equation}
The limiting fraction is $\theta=\lim_{\epsilon\to0}\theta(\epsilon,x,y)$,
when this limit exists.
\end{definition}
The two bounds include the same model for input and output rounding.
Their algorithm-dependent terms use $\sigma(\Xi(\lambda))$ and
$\sigma_c$, respectively. The logarithms measure the digits guaranteed
by these bounds; their ratio need not equal the limiting fraction at a
particular working precision.

\begin{lemma}[Parameter choice and retained accuracy]
\label{lem:retained-accuracy}
Under the rounding assumptions of Lemma~\ref{lem:rounded-bilinear}, for
$0<\lambda\le1$ and $\mathcal M(x,y)\ne0$,
\begin{align}
\frac{\|\widehat z-\mathcal M(x,y)\|_2}{\|x\|_2\|y\|_2}
&\le (1+\epsilon)^3\beta(\lambda)+\bigl((1+\epsilon)^3-1\bigr)
       \bigl(1+(1+\epsilon)^3\sigma(\Xi(\lambda))\bigr).
\label{eq:conditioned-error}
\end{align}
For a construction from Theorem~\ref{thm:graded} with
$\mathcal J\ne\emptyset$, use its normalized grading from
Definition~\ref{def:precision-order} and let
\begin{equation}
\lambda_{\rm bal}(\epsilon)=\min\left\{1,
\left(\frac{3\epsilon\sigma_0}{\beta'(0)}\right)^{1/(h+1)}\right\}.
\label{eq:parameter-choice}
\end{equation}
Then
\begin{equation}
e(\epsilon,x,y)=O\!\left(\epsilon^{1/(h+1)}\right),\qquad
\theta=\frac{1}{1+h}.
\label{eq:target-retained-fraction}
\end{equation}
For an exact construction, $\lambda=1$ instead gives $\theta=1$.
\end{lemma}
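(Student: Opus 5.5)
The bound \eqref{eq:conditioned-error} comes from splitting the error with the triangle inequality,
\[
\|\widehat z-\mathcal M(x,y)\|_2\le\|\widehat z-\mathcal M_\lambda(x,y)\|_2+\|(\mathcal M_\lambda-\mathcal M)(x,y)\|_2 .
\]
For the first term, I apply Lemma~\ref{lem:rounded-bilinear} to $\mathcal L=\mathcal M_\lambda$ and $\Xi=\Xi(\lambda)$. This gives the factor $((1+\epsilon)^3-1)\bigl[\|\mathcal M_\lambda\|+(1+\epsilon)^3\sigma(\Xi(\lambda))\bigr]$. I then bound $\|\mathcal M_\lambda\|\le 1+\beta(\lambda)$ using $\|\mathcal M\|=1$ and Lemma~\ref{lem:degeneration-conditioning}. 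The second term is at most $\beta(\lambda)\|x\|_2\|y\|_2$ by the same lemma. Collecting the $\beta$ contributions gives $\beta(\lambda)\bigl(1+(1+\epsilon)^3-1\bigr)=(1+\epsilon)^3\beta(\lambda)$, which is exactly the stated coefficient. Dividing by $\|x\|_2\|y\|_2$ proves \eqref{eq:conditioned-error}, and multiplying by $\|x\|_2\|y\|_2/\|\mathcal M(x,y)\|_2$ recovers $e(\epsilon,x,y)$ as defined.

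For the asymptotics, fix the construction, $n$, $x$ and $y$. Since the grading is normalized, $\beta$ is a finite sum of positive powers $\lambda^{e_{ijk}}$ with every $e_{ijk}\ge1$ and at least one equal to $1$. Hence $\beta(\lambda)=\beta'(0)\lambda+O(\lambda^{1+\gamma})$ for some $\gamma>0$, with $\beta'(0)>0$. By \eqref{eq:exact-small-parameter-sigma}, $\sigma(\Xi(\lambda))=\sigma_0\lambda^{-h}$ for small $\lambda$. As $\epsilon\to0$ we have $\lambda_{\rm bal}=c\,\epsilon^{1/(h+1)}\to0$ with $c=(3\sigma_0/\beta'(0))^{1/(h+1)}$, so eventually the minimum with $1$ is inactive and these formulas apply. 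Substituting gives
\[
\beta(\lambda_{\rm bal})\sim\beta'(0)c\,\epsilon^{1/(h+1)},\qquad 3\epsilon\,\sigma_0\lambda_{\rm bal}^{-h}\sim 3\sigma_0c^{-h}\epsilon^{1/(h+1)} .
\]
The two terms balance by the choice of $c$. The remaining term $((1+\epsilon)^3-1)\cdot1=O(\epsilon)$ is of lower order. Therefore $e=K\epsilon^{1/(h+1)}(1+o(1))$ with a constant $K>0$ that depends on $x$ and $y$.

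The classical reference satisfies $e_c=K_c\,\epsilon(1+o(1))$ with $K_c=3(1+\sigma_c)\|x\|\|y\|/\|\mathcal M(x,y)\|$. Then
\[
\theta(\epsilon,x,y)=\frac{\tfrac{1}{h+1}\log(1/\epsilon)-\log K+o(1)}{\log(1/\epsilon)-\log K_c+o(1)}\to\frac{1}{1+h}.
\]
In the exact case, take $\lambda=1$ and $\beta\equiv0$ (as $\mathcal J=\emptyset$). Then $e=K'\epsilon(1+o(1))$ and the same ratio tends to $1$.

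The main obstacle is making the expansion of $\beta$ near $0$ rigorous while keeping the positive constants uniform. Specifically, one must check that $\beta'(0)$ is positive, so that the normalizing minimum exponent $1$ is attained with a nonzero coefficient $|c_{\mathcal Q}|(n+1)^{|\mathcal R|}$. This holds because every index in $\mathcal J$ has $c_{\mathcal Q}\neq0$ by Definition~\ref{def:aggregate-index-sets}. The second check is that $\sigma(\Xi(\lambda))=\sigma_0\lambda^{-h}$ holds exactly only for sufficiently small $\lambda$, which is guaranteed since $\lambda_{\rm bal}\to0$. I also note that $e<1$ and $e_c<1$ hold for small $\epsilon$, so $\theta$ is defined.
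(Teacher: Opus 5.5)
Your proof is correct and follows the paper's argument. You apply Lemma~\ref{lem:rounded-bilinear} to $\mathcal M_\lambda$, bound $\|\mathcal M_\lambda\|\le 1+\beta(\lambda)$, add the approximation error, and then balance $\beta'(0)\lambda$ against $3\epsilon\sigma_0\lambda^{-h}$. The one step the paper proves that you use tacitly is $h>0$ (if $h=0$, each of $a_t,b_t,c_t$ would be constant in $t$, making every mixed degree zero, contrary to normalization); it justifies calling the $O(\epsilon)$ term lower order, although your limit $\theta=1/(1+h)$ would hold even without it.
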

\begin{proof}
Apply Lemma~\ref{lem:rounded-bilinear} to $\mathcal L=\mathcal M_\lambda$,
then add the approximation error. By \eqref{eq:target-map-norm} and
Lemma~\ref{lem:degeneration-conditioning},
$\|\mathcal M_\lambda\|\le1+\beta(\lambda)$; substituting this bound gives
\eqref{eq:conditioned-error}.
The smallest degree among the remaining mixed products is one, with a
positive coefficient in $\beta$, so $\beta'(0)>0$ and
$\beta(\lambda)=\beta'(0)\lambda+o(\lambda)$.
Equation~\eqref{eq:exact-small-parameter-sigma} gives
$\sigma(\Xi(\lambda))=\sigma_0\lambda^{-h}$ for sufficiently small
positive $\lambda$, with $\sigma_0>0$. If $h=0$, the nonnegative
differences $a_t-\min_s a_s$, $b_t-\min_s b_s$, and $c_t-\min_s c_s$ sum to zero
for every $t$, making every mixed degree zero, contrary to normalization.
Thus $h>0$, and the leading error terms are
\begin{equation}
\beta'(0)\lambda+3\epsilon\sigma_0\lambda^{-h}.
\label{eq:leading-error-balance}
\end{equation}
Equating them gives \eqref{eq:parameter-choice}, before clipping at one.
As $\epsilon\to0$, the choice tends to zero, so the small-parameter
norm identity applies and these terms are positive constants times
$\epsilon^{1/(h+1)}$. The contribution independent of the factorization
is $O(\epsilon)$ and hence of smaller order; the higher-order
terms in \eqref{eq:conditioned-error} are also smaller. The classical
bound satisfies
$e_c(\epsilon,x,y)\sim
3(1+\sigma_c)\|x\|_2\|y\|_2\epsilon/\|\mathcal M(x,y)\|_2$.
Consequently,
\begin{equation}
\log\frac1{e(\epsilon,x,y)}
 =\frac1{1+h}\log\frac1\epsilon+O(1),\qquad
\log\frac1{e_c(\epsilon,x,y)}
 =\log\frac1\epsilon+O(1).
\label{eq:accuracy-log-asymptotics}
\end{equation}
Their ratio tends to $1/(1+h)$.
For an exact construction, $\beta=0$ and $\lambda=1$ make both bounds
positive constants times $\epsilon$, so the limiting fraction is one.
\end{proof}

For normalized heights $h\le1,2,3$, respectively, the limiting retained
fraction is at least $1/2,1/3,1/4$. 
The residual coefficients and replicated-coordinate factors in $\beta$
also affect the finite-precision bound. At a specified precision, use the full
functions $\beta(\lambda_{\rm bal}(\epsilon))$ and
$\sigma(\Xi(\lambda_{\rm bal}(\epsilon)))$ in
Definition~\ref{def:retained-accuracy}, evaluating $\sigma$ from the full
maxima in \eqref{eq:spectral-factor-norms}.

For a fixed tripartition set and correction family $\mathcal Q$, the
remaining triples $\mathcal J$ are determined by
Definition~\ref{def:aggregate-index-sets}. Choosing $\lambda$ balances
error for one assignment of exponents; choosing the relative exponents
can increase the limiting fraction by reducing $h$. The following linear
program determines the best such fraction for this fixed correction family.

\begin{proposition}[Choice of parameter exponents]
\label{prop:shifted-normalization}
Fix $\mathcal S$ and the evaluated corrections $\mathcal Q$, and suppose
the remaining triples $\mathcal J$ admit a grading. The largest limiting
fraction given by the normalized gradings is $1/(h_*+1)$, where
\begin{equation}
\begin{aligned}
h_* = \min\ &h\\
\text{subject to }&a_t,b_t\ge0,\quad c_t\ge-h,\quad a_t+b_t+c_t=0
 &&(1\le t\le m),\\
&a_i+b_j+c_k\ge1 &&((i,j,k)\in\mathcal J).
\end{aligned}
\label{eq:height-lp}
\end{equation}
The minimum is attained by rational exponents. At an optimum,
$\min_t a_t=\min_t b_t=0$ and $\min_t c_t=-h_*$.
\end{proposition}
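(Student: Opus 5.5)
The proof reduces the optimization over normalized gradings to the LP \eqref{eq:height-lp} using two invariances. Each is checked against the definitions: shifts along the diagonal, and positive rescaling.

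\emph{Step 1: shifts.} Given any grading $(a_t,b_t,c_t)$ and constants $\alpha,\beta$, define $a_t'=a_t-\alpha$, $b_t'=b_t-\beta$, $c_t'=c_t+\alpha+\beta$. Then $a_t'+b_t'+c_t'=0$ still holds. Every mixed degree $a_i+b_j+c_k$ is unchanged, so the triples in $\mathcal J$ and their degrees are preserved. The growth exponent $h$ of \eqref{eq:grading-height} is also invariant, because the three minima shift by $-\alpha,-\beta,+\alpha+\beta$. We may therefore take $\alpha=\min_t a_t$ and $\beta=\min_t b_t$, which gives $\min_t a_t=\min_t b_t=0$ and $h=-\min_t c_t$.

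\emph{Step 2: rescaling.} Positive rescaling by $1/g$, as in Section~\ref{sec:approximation-error}, turns the constraint $\min_{\mathcal J}e_{ijk}\ge1$ into equality. Scaling by a factor $\le1$ only decreases $h$, since $h\ge0$ under Step~1 normalization. Hence every feasible LP point with value $h$ yields a normalized grading with growth exponent at most $h$. Conversely, every normalized grading yields, after Step~1, a feasible point with the same $h$. So the infimum of $h$ over normalized gradings equals the LP optimum $h_*$. By Lemma~\ref{lem:retained-accuracy} each such grading gives $\theta=1/(1+h)$, and the largest fraction is $1/(1+h_*)$.

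\emph{Step 3: existence and rationality.} Feasibility follows from the assumed grading: shift it as in Step~1, then scale it so the minimal mixed degree is one. The objective is bounded below by $0$, since $c_t\ge -h$ and $a_t,b_t\ge0$ with $a_t+b_t+c_t=0$ force $h\ge -c_t\ge0$. A feasible bounded LP with rational (integer) data attains its optimum at a vertex. That vertex is rational by Cramer's rule, since the constraint matrix has entries in $\{0,\pm1\}$.

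\emph{Step 4: form of an optimum.} Take any optimum $(a,b,c,h_*)$ and apply the shift of Step~1. The shifted point stays feasible, the degrees and the value $h_*$ are unchanged, and then $\min_t a_t=\min_t b_t=0$. Moreover $h_*\ge -\min_t c_t$, and strict inequality would contradict optimality, because lowering $h$ to $-\min_t c_t$ stays feasible. Hence $\min_t c_t=-h_*$.

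The main obstacle is the precise reading of the last sentence of the statement. It is not literally true that \emph{every} optimum has $\min_t a_t=0$: $a_t$ could all be positive while $h$ stays optimal, because the shift of Step~1 preserves $h$. The plan is therefore to state it as ``some optimum (obtained by shifting any optimum) satisfies'' the three equalities. The equality $\min_t c_t=-h_*$ does hold for every optimum once $a$ and $b$ are normalized.
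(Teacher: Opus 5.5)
Your route matches the paper's: shift $a_t,b_t$ into $c_t$ so that $\min_t a_t=\min_t b_t=0$, rescale so the smallest remaining degree is one, apply Lemma~\ref{lem:retained-accuracy}, and use rationality of an LP optimum. Steps 1--3 are sound. You should add the case $\mathcal J=\emptyset$, where no rescaling is defined and zero exponents give $h_*=0$, as the paper does.

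The error is in your last paragraph and the weakened Step 4 it leads to. In \eqref{eq:height-lp} the objective is the LP variable $h$, not the growth exponent \eqref{eq:grading-height}; $h$ enters only through $c_t\ge-h$. At a feasible point, $a_t,b_t\ge0$ gives
\[
h\ge-\min_t c_t=(\text{growth exponent})+\min_t a_t+\min_t b_t .
\]
So the shift preserves the growth exponent but not the LP value. Suppose $\alpha=\min_t a_t>0$ at a feasible point. Replace $a_t$ by $a_t-\alpha$ and $c_t$ by $c_t+\alpha$. This keeps $a_t\ge0$ and all degrees on $\mathcal J$, and now $c_t\ge-(h-\alpha)$, so $h$ can be lowered by $\alpha$. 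Hence no optimum has a positive minimum of $a_t$ or $b_t$, and the three equalities hold at every optimum. This is exactly the paper's argument: ``a positive minimum of $a_t$ or $b_t$ could be removed as above to decrease $h$.''

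Your own Step 4 already proves this. After the shift, $-\min_t c'_t=-\min_t c_t-\alpha-\beta\le h_*-\alpha-\beta$, so optimality forces $\alpha=\beta=0$ for the original optimum. Drop the claimed counterexample and state the equalities for all optima rather than for ``some optimum''.
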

\begin{proof}
Subtracting $\min_t a_t$ from every $a_t$ and adding it to every $c_t$
preserves all product degrees and the spectral-norm product. The same
holds for $b_t$. Thus we may impose $a_t,b_t\ge0$ with minima zero; then
\eqref{eq:grading-height} reduces to $h=-\min_t c_t$. The constraints on
$\mathcal J$ ensure that every remaining degree is at least one.

At an optimum, a positive minimum of $a_t$ or $b_t$ could be removed as
above to decrease $h$, and $\min_t c_t>-h$ would also allow $h$ to decrease.
When $\mathcal J\ne\emptyset$, the smallest degree among the remaining mixed products must be one:
otherwise dividing all exponents and $h$ by that degree lowers the
objective. Lemma~\ref{lem:retained-accuracy} therefore identifies the
largest limiting fraction with $1/(h_*+1)$. The linear program has rational
coefficients and a bounded objective, so a feasible instance has a
rational optimum. If $\mathcal J=\emptyset$, zero exponents give $h_*=0$.
\end{proof}

\section{Computational search and comparisons}
\label{sec:balanced-constructions}
\label{sec:frontier}

The numerical comparisons evaluate multiplication counts as the matrix
dimension, number of independent products, and required fraction of retained
accuracy vary. We compare explicit corrections with padding and weighting,
both for the same tripartition set and for the number of products that can be
accommodated. Table~\ref{tab:construction-parameters} summarizes the selected
constructions. Appendices~\ref{apx:construction-bounds} and~\ref{apx:unequal-cw}
give additional bounds, and Appendix~\ref{apx:construction-certificates}
indexes the complete construction data and their figure appearances.

\subsection{Parameters and search}
\label{sec:plot-models}
\label{sec:fixed-dimension-comparison}

A balanced family in $B_{N,N,N}$ gives $p=|\mathcal S|$ independent
products of dimension $q=n^N$, with $d=3N$ aggregate indices.
The local dimension $n$ determines the embedding overhead, while the
family determines the correction counts and feasible parameter exponents.
We evaluate the selected multiplication budget $r=\rho_{\mathcal S,\mathcal Q}(n)$
from \eqref{eq:partial-correction-budget}, using all corrections in $\mathcal P(\mathcal S)$ for exact constructions. For a reference algorithm with count
$r_{\rm ref}$, its reduction factor is $r_{\rm ref}/r$.
In particular, relative to classical multiplication we use
\begin{equation}
\xi(p,q,r)=\frac{pq^3}{r},\qquad
\omega_{\rm eff}(p,q,r)=\frac{\log(r/p)}{\log q}.
\label{eq:intro-throughput}
\end{equation}
The plotted quantity is $\xi$, with one curve for each family.

We compare exact constructions, approximations with limiting retained
fraction $\theta\ge1/2,1/3,1/4$ of
Definition~\ref{def:retained-accuracy}, and unrestricted border rank.
The value of $\theta$ is computed from the smallest remaining degree
and normalized height; Proposition~\ref{prop:shifted-normalization} gives
the linear program for fixed tripartitions and corrections. At fixed
working precision, \eqref{eq:finite-precision-fraction} compares the
full approximation and round-off bound with the classical
reference. We compute $\sigma$ of Definition~\ref{def:spectral-product}
from \eqref{eq:spectral-factor-norms} and use the leading-term
parameter rule~\eqref{eq:parameter-choice}.
For $N=2,3,4$, we evaluate every integer $n\ge2$ with $n^N\le16000$.
Markers denote the dimensions $q=n^N$ at these integer values of $n$;
connecting segments join the samples. The largest sampled dimensions are
$15876$, $15625$, and $14641$, respectively.

For a prior square base of dimension $b$ and multiplication count $r_b$,
we use the padding-free recursive model
\begin{equation}
\xi\!\left(p,q,pq^{\log_b r_b}\right)=q^{3-\log_b r_b}.
\label{eq:prior-count-model}
\end{equation}
It agrees with $k$ tensor levels at $q=b^k$ and provides an optimistic
comparison at other dimensions. The base algorithms and their sources are listed in
Table~\ref{tab:plot-baselines}. The model applies the base separately to
each of the $p$ products, giving the reference count $pq^{\log_b r_b}$.

\paragraph{Search algorithm.}
We searched for constructions using several algorithms: randomized greedy
selection, replacement of tripartitions, search over discretized exponents,
and linear and mixed-integer programming. We retain the feasible
constructions found by these algorithms. The following describes the joint
search over tripartition sets and exponents used for the comparisons.

For balanced families, the candidates are the
$(3N)!/(N!)^3$ tripartitions in $B_{N,N,N}$. A local update removes a random
subset of the current family and scans the candidates in random order to
refill it. A candidate is accepted only when the three part projections remain
injective and a feasible pair of grid exponents $(a_t,b_t)$ is available,
with $c_t=-a_t-b_t$. The procedure periodically starts from the empty family
and occasionally accepts a lower-scoring family to explore other choices.
Stored constructions also supply starting families.

For a nonexact construction and a requested retained fraction
$0<\vartheta<1$, we use $\mathcal Q=\mathcal Q_{\le0}$ from
Definition~\ref{def:degree-corrections} and maximize
$pn^{3N}/\rho_{\mathcal S,\mathcal Q}(n)$. For each candidate, we recompute
$c_{\mathcal Q}(\mathcal R_{ijk})$ for every mixed product not annihilated
by centering. Let $g$ be the smallest remaining degree and
$h_0=-\min_t a_t-\min_t b_t-\min_t c_t$. A candidate with remaining mixed
products is eligible when
\begin{equation}
\theta=\frac{g}{g+h_0}\ge\vartheta.
\label{eq:search-retained-fraction}
\end{equation}
Updating an exponent pair changes the required correction sets, each
of which is counted only once. With the tripartitions fixed, linear and
mixed-integer programming provide further exponent choices. After each
such refinement, we recompute $\mathcal Q_{\le0}$ and check the attained
fraction before accepting the candidate.

The weighted search instead maximizes $p$, since one aggregate always costs
$(n+2)^{3N}$. Weighting eliminates all replicated products, so only
nonreplicated mixed products constrain its grading. Each tripartition set is
evaluated with both embeddings, and each construction remains eligible at all
weaker accuracy requirements. For incomplete tripartitions, the code also
supports local replacement with objective
$\sum_t n^{\omega|G_t|/3}$ at trial values of $(n,\omega)$, and mixed-integer
selection from an explicitly enumerated candidate set.

\paragraph{Exploration budget.}
The systematic weighted exploration covers $N=1,2,3,4$, exactness, and target
fractions $1/2,1/3,1/4$, together with an unrestricted class explored at height
bounds $6$ and $12$. Each height bound uses two grid spacings, $1$ and $1/2$, and two
random restarts, with $160$ local updates per start. For corrected constructions,
we use $N=2,3,4$, the same two grids, and target fractions
$1/2,1/3,1/4,1/7,1/13$; the last two explore the unrestricted comparison.
The local dimensions used in the objective are $(3,12,126)$, $(3,8,25)$, and $(3,7,11)$,
respectively. A standard start has $80$ local updates; additional optimization of selected
families uses up to $600$.

\paragraph{Verification and selection.}
Every saved construction uses exactly represented rational exponents and is checked
against all centered mixed products. An independent set-based enumeration
checks the selected tripartitions and inclusion--exclusion coefficients. The
factor-generation tests compare the resulting trilinear forms with the target
and the predicted residual; small cases additionally admit a full dense
coefficient-tensor comparison. The figure checks independently recompute
multiplication counts, attained accuracy fractions, and plotted coordinates from the
saved tripartitions, exponents, and corrections. Reproducing the comparisons
therefore does not require repeating the search. All saved feasible constructions
remain available, and the selected ones are evaluated at every integer local
dimension in the plotted range.

If a grading suppresses every non-diagonal product surviving centering,
including replicated products, no corrections are needed:
\begin{equation}
\underline R\left(\bigoplus_{t=1}^{m}
\left\langle n^{|V_t|},n^{|W_t|},n^{|U_t|}\right\rangle\right)
\le(n+1)^d.
\label{eq:correction-free-rank}
\end{equation}

\subsection{Construction parameters}
\label{sec:construction-parameters}

For each family below, let $r_p(n)$ denote its
multiplication budget for all $p$ products of dimension $q=n^N$.
For an exact construction this is an upper bound on
$R(\langle q,q,q\rangle^{\oplus p})$; for a parameter-dependent
construction it bounds the border rank and counts the bilinear products
at each nonzero parameter value.
In \eqref{eq:partial-correction-budget}, $(n+1)^{3N}$ counts the main aggregate and
$\kappa_s (n+1)^{3N-s}$ counts the corrections omitting $s$ coordinates.
Thus singleton, pair, triple, and quadruple corrections have $s=1,2,3,4$,
respectively (Definition~\ref{def:centered-aggregate}). The correction counts,
normalized heights, and limiting fractions of retained accuracy are
summarized in Table~\ref{tab:construction-parameters}. Appendix~\ref{apx:construction-certificates}
indexes the complete construction data and identifies the figures in which
each construction appears; the identifiers below are the file stems in that index.

\begin{table}[!htbp]
\centering\small
\begin{tabular}{rrllcc}
\toprule
$N$ & $p$ & type & $(\kappa_1,\kappa_2,\ldots)$ & $h$ & $\theta$\\
\midrule
2 & 6 & exact & $(4)$ & $0$ & $1$\\
2 & 6 & border rank & $\emptyset$ & $3$ & $\frac{1}{4}$\\
2 & 7 & border rank & $\emptyset$ & $15$ & $\frac{1}{16}$\\
2 & 8 & border rank & $(5)$ & $1$ & $\frac{1}{2}$\\
2 & 9 & border rank & $(6)$ & $2$ & $\frac{1}{3}$\\
2 & 9 & border rank & $(5)$ & $\frac{8}{3}$ & $\frac{3}{11}$\\
2 & 10 & border rank & $(4)$ & $7$ & $\frac{1}{8}$\\
\addlinespace[2pt]
3 & 14 & exact & $\emptyset$ & $0$ & $1$\\
3 & 28 & exact & $(9,35,4)$ & $0$ & $1$\\
3 & 32 & border rank & $(9,27)$ & $1$ & $\frac{1}{2}$\\
3 & 35 & border rank & $(9,27)$ & $2$ & $\frac{1}{3}$\\
3 & 36 & border rank & $(9,27)$ & $3$ & $\frac{1}{4}$\\
3 & 37 & border rank & $(9,29)$ & $3$ & $\frac{1}{4}$\\
3 & 39 & border rank & $(9,26)$ & $12$ & $\frac{1}{13}$\\
\addlinespace[2pt]
4 & 38 & exact & $\emptyset$ & $0$ & $1$\\
4 & 105 & exact & $(12,66,124,3)$ & $0$ & $1$\\
4 & 125 & border rank & $(12,66,143,5)$ & $1$ & $\frac{1}{2}$\\
4 & 133 & border rank & $(12,66,130,6)$ & $2$ & $\frac{1}{3}$\\
4 & 141 & border rank & $(12,66,144,6)$ & $3$ & $\frac{1}{4}$\\
4 & 150 & border rank & $(12,66,131,6)$ & $11$ & $\frac{1}{12}$\\
\bottomrule
\end{tabular}
\caption{Parameters of the square-product constructions used in the comparisons.
Each computes $p$ products of dimension $q=n^N$.
The entry $\kappa_s$ counts correction aggregates that omit $s$ coordinates;
$\emptyset$ means that no corrections are used.
The normalized height $h$ and limiting retained fraction $\theta=1/(1+h)$
are defined in Section~\ref{sec:stability}.
The two $N=2,p=9$ rows trade one singleton correction for a weaker accuracy requirement.
The data files and figure appearances for every row are indexed in Appendix~\ref{apx:construction-certificates}.}
\label{tab:construction-parameters}
\end{table}

\paragraph{Fourteen exact products.}
The $N=3$, $p=14$ family has fourteen distinct labels in each variable
space. Centering annihilates every non-diagonal choice, so one aggregate
on nine indices suffices:
\begin{equation}
 r_{14}(n)=(n+1)^9.
\end{equation}
There are no corrections or parameter exponents. This gives fourteen independent matrix--matrix products
with the lowest embedding overhead among the principal $N=3$ families,
which explains its advantage at intermediate dimensions.
Construction data: Appendix~\ref{apx:construction-certificates}, \texttt{ex14}.

\paragraph{Twenty-eight exact products.}
A second $N=3$ family doubles the number of independent matrix--matrix products by admitting replicated
mixed terms. Nine singleton, thirty-five pair, and four triple
corrections remove these terms, giving
\begin{equation}
 r_{28}(n)=(n+1)^9+9(n+1)^8+35(n+1)^7+4(n+1)^6.
\label{eq:exact-28}
\end{equation}
No compatible non-diagonal product remains. The correction terms grow
more slowly than the main aggregate, so the larger support eventually
costs less than two fourteen-product aggregates. In fact,
$r_{28}(n)<2r_{14}(n)$ for integer $n\ge11$.
Thus permitting replicated terms can reduce the count per output even
when the algorithm is required to be exact.
Construction data: Appendix~\ref{apx:construction-certificates}, \texttt{ex28}.

\paragraph{Thirty-two products with half retained precision.}
The $N=3$, $p=32$ family has budget
\begin{equation}
 r_{32}(n)=(n+1)^9+9(n+1)^8+27(n+1)^7.
\label{eq:d9-corr-32}
\end{equation}
Its twenty-seven remaining mixed products, including three replicated
products, are suppressed by a grading with $h=1$, giving $\theta=1/2$.
Compared with the exact twenty-eight-product family, it needs eight
fewer pair corrections and no triple corrections.
Relaxing exactness therefore has two benefits here: more products are
aggregated, and the total multiplication budget is smaller.
Construction data: Appendix~\ref{apx:construction-certificates}, \texttt{br32}.

\paragraph{Thirty-five products with one-third retained precision.}
The $N=3$, $p=35$ family has the same correction budget as the half-accuracy
construction:
\begin{equation}
 r_{35}(n)=(n+1)^9+9(n+1)^8+27(n+1)^7.
\end{equation}
Its eighty-three remaining mixed products, including thirty-six replicated
products, admit normalized exponents of height $2$, giving $\theta=1/3$.
The three additional outputs therefore require no increase in the
multiplication budget. A full-correction variant attains the
slightly larger fraction $8/23$ with thirty-one pair corrections;
its tripartitions and exponents are supplied for that stronger requirement.
Construction data: Appendix~\ref{apx:construction-certificates}, \texttt{br35}
and \texttt{br35full}, respectively.

\paragraph{Thirty-six products with quarter retained precision.}
A further $N=3$ family adds one more product at the same cost:
\begin{equation}
 r_{36}(n)=(n+1)^9+9(n+1)^8+27(n+1)^7.
\end{equation}
Here $h=3$ gives $\theta=1/4$. Of its 171 remaining mixed products,
116 are replicated. Thus the half-, third-, and quarter-accuracy choices
have distinct supports and gradings but identical rank polynomials,
aggregating $32$, $35$, and $36$ products, respectively.
Construction data: Appendix~\ref{apx:construction-certificates}, \texttt{br36}.

\paragraph{One hundred and five exact products.}
\label{sec:d12-search}
For $N=4$, the corrected exact family has
\begin{equation}
 r_{105}(n)=(n+1)^{12}+12(n+1)^{11}+66(n+1)^{10}+124(n+1)^9+3(n+1)^8.
\end{equation}
All singleton and pair corrections occur, but only $124$ of the $220$
triples and three of the $495$ quadruples are needed.
The correction-free family at the same order aggregates $38$ products
with budget $(n+1)^{12}$. Thus the corrected family provides almost three
times as many outputs, but must first overcome a substantial correction
cost.
At a fixed matrix dimension, increasing $N$ also reduces the local
dimension $n$; this delays the advantage over the $N=3$ families.
Construction data: Appendix~\ref{apx:construction-certificates}, \texttt{ex105} and \texttt{ex38}.

\paragraph{One hundred and forty-one products with quarter retained precision.}
The largest graded family at quarter retained precision has $N=4$, $p=141$, and budget
\begin{equation}
 r_{141}(n)=(n+1)^{12}+12(n+1)^{11}+66(n+1)^{10}+144(n+1)^9+6(n+1)^8.
\end{equation}
A grading of height three suppresses its $497$ remaining mixed
products, of which $109$ are replicated, giving $\theta=1/4$.
The $125$- and $133$-product families satisfy the stronger bounds
$h\le1$ and $h\le2$, respectively.
All four large families share the leading aggregate and all singleton
and pair corrections. Their budgets differ only in the $(n+1)^9$ and $(n+1)^8$
terms, while their output counts range from $105$ to $141$.
Consequently, relaxing the retained-precision requirement increases the
number of products at nearly the same total multiplication cost.
Construction data: Appendix~\ref{apx:construction-certificates}, \texttt{br125}, \texttt{br133}, and \texttt{br141}.

\paragraph{Other accuracy requirements.}
At quarter retained precision, a $37$-product family improves the count
per product of the $36$-product family for $n\ge3$. Without the quarter-accuracy
requirement, $39$ products are possible with fewer pair corrections. Their budgets are
\begin{align}
r_{37}(n)&=(n+1)^9+9(n+1)^8+29(n+1)^7,
&\theta&=\frac14,\\
r_{39}(n)&=(n+1)^9+9(n+1)^8+26(n+1)^7,
&\theta&=\frac1{13}.
\label{eq:new-d9-budgets}
\end{align}
A twelfth-order construction without an accuracy requirement gives
\begin{equation}
\begin{aligned}
r_{150}(n)&=(n+1)^{12}+12(n+1)^{11}+66(n+1)^{10} +131(n+1)^9+6(n+1)^8,\qquad \theta=\frac1{12}.
\end{aligned}
\label{eq:new-d12-budget}
\end{equation}
At order six, the selected unrestricted ten-product construction uses
only four singleton corrections, with $h=7$ and $\theta=1/8$.
A nine-product variant uses five singleton corrections and attains
$\theta=3/11>1/4$; the version attaining $1/3$ uses six.
These choices target small multiplication counts at the stated limiting
accuracy requirements, not the full finite-precision error bound.
Construction data: Appendix~\ref{apx:construction-certificates}, \texttt{br37}, \texttt{ub9}, \texttt{ub12}, \texttt{br10}, \texttt{br9q}, and \texttt{br9}.

\paragraph{Accuracy at fixed precision.}
The normalized height $h$ determines the growth $\lambda^{-h}$ of
$\sigma$, while the leading
approximation coefficient $\beta'(0)$ of
Definition~\ref{def:approximation-bound} includes only
mixed products of smallest positive degree, weighted by
$|c_{\mathcal Q}(\mathcal R_{ijk})|(n+1)^{|\mathcal R_{ijk}|}$.
Table~\ref{tab:sigma-approximate} gives these quantities and the
normalized leading coefficient $\sigma_0/\sigma_c$
(Definition~\ref{def:leading-sigma}) for the approximate families. The correction sets also enter the factor norms, so both
$\sigma_0$ and $\beta'(0)$ are recomputed for each selected family.
The limiting fraction is determined by $h$; the remaining mixed products
and factor norms additionally affect the parameter choice and accuracy
at fixed working precision. Reducing the count at a prescribed limiting
fraction does not minimize the error bound at fixed precision: omitting
replicated corrections can substantially increase $\beta'(0)$. Full-correction
variants therefore remain relevant when these constants are decisive.
For exact families, the common bound~\eqref{eq:exact-sigma-ratio}
shows that the extra amplification decreases to one as $n$ increases.

The parameter rule balances the first approximation term against the
leading intermediate-rounding term. This minimizes their sum when $h=1$,
but need not do so for other heights. The higher-order approximation terms
can be included by minimizing the full bound~\eqref{eq:conditioned-error}
with respect to $\lambda$.
This changes the finite-precision parameter choice while retaining the
same limiting fractions displayed in the count comparisons.

Neither the limiting retained fraction nor balancing, or even minimizing,
these error bounds necessarily selects the most stable or fastest construction
at a fixed working precision. The bounds are conservative, and multiplication
counts omit encoding, decoding, and data movement. Searches intended for an
implementation should therefore account for error constants and higher-order
approximation terms, the evaluation order, and measured execution costs at the precision
and dimensions of interest.

\subsection{Centering, corrections, and weighting}
\label{sec:embedding-comparison}

We separate the cost of evaluating a fixed support from the number of
products that a weighted aggregate can accommodate. For the first comparison,
the centered, padded, and weighted embedding of
Section~\ref{sec:padding-weighting} uses $(n+2)^{3N}$ multiplications.
The construction with selected corrections uses
$\rho_{\mathcal S,\mathcal Q}(n)$ on the \emph{same support}, with the
\emph{same grading}. Figure~\ref{fig:embedding-savings} shows the fractional
saving, normalized to the weighted count:
\begin{equation}
1-\frac{\rho_{\mathcal S,\mathcal Q}(n)}{(n+2)^{3N}}.
\label{eq:weighted-paired-reduction}
\end{equation}
For a graded support, weighting eliminates all replicated terms, whereas
explicit corrections need eliminate only those required by the prescribed
accuracy. The weighted version therefore meets at least the same limiting
accuracy requirement; equality of finite-precision errors is not assumed.
Appendix~\ref{apx:weighted-comparison} gives the weighted specialization.

The exact $p=14$ and $p=38$ families are outliers in
Figure~\ref{fig:embedding-savings}: centering alone suffices, with no
corrections. Their savings are $1-((n+1)/(n+2))^{3N}$, approximately
$28.8\%$ at $(N,n)=(3,25)$ and $61.7\%$ at $(4,11)$, respectively.
The larger saving at $N=4$ reflects both the higher order and the smaller
local dimension; the correction-free $N=2$ families have much smaller
savings at $n=126$. Most remaining families require corrections whose
costs offset much of the reduction from dimension $n+2$ to $n+1$.
These bars compare the same support with and without weighting; they do
not establish the lowest count per product, since families with
corrections can accommodate more products.

\begin{figure}[!htbp]
\centering
\includegraphics[width=\textwidth]{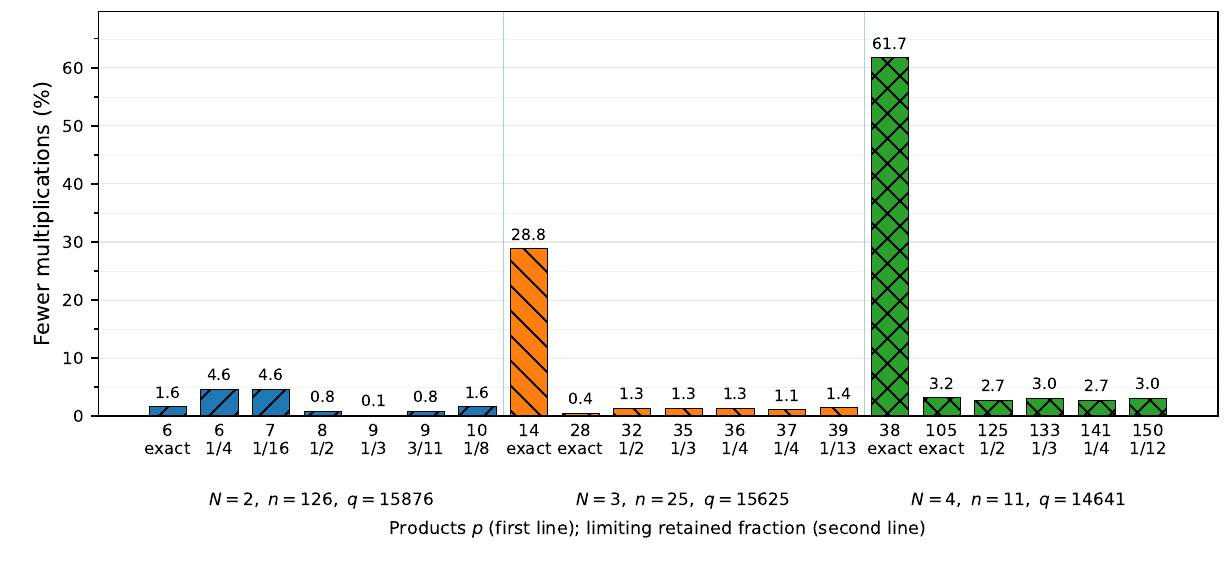}
\caption{Percentage fewer multiplications than centering, padding, and
weighting the identical support with the identical grading. Each bar is
$100$ times \eqref{eq:weighted-paired-reduction}; zero means no saving.
The first line below a bar gives $p$, and the second gives exactness or the
attained limiting retained fraction. Within each order, $n$ is the largest
integer with $q=n^N\le16000$. The comparison changes neither the requested
products nor the matrix dimension, and involves no composition of different
constructions. These are multiplication counts, not execution times.}
\label{fig:embedding-savings}
\end{figure}

For the second comparison, Figure~\ref{fig:weighted-capacity} shows how
many independent products one weighted aggregate can accommodate at fixed
$N$, with multiplication count $(n+2)^{3N}$. Without parameter
scaling its support must be conflict-free. Allowing a border-rank grading
can admit more products, subject to the requested limiting retained
fraction. The capacity search therefore maximizes $p$, independently of
$n$, rather than minimizing the correction-based count per product.
Only mixed triples surviving weighting constrain this grading. These
weighted constructions need not use the same tripartitions or exponents as
the centered constructions in Section~\ref{sec:construction-parameters};
Table~\ref{tab:weighted-construction-index} identifies the complete data
for every level in Figure~\ref{fig:weighted-capacity}.

\begin{figure}[!htbp]
\centering
\includegraphics[width=\textwidth]{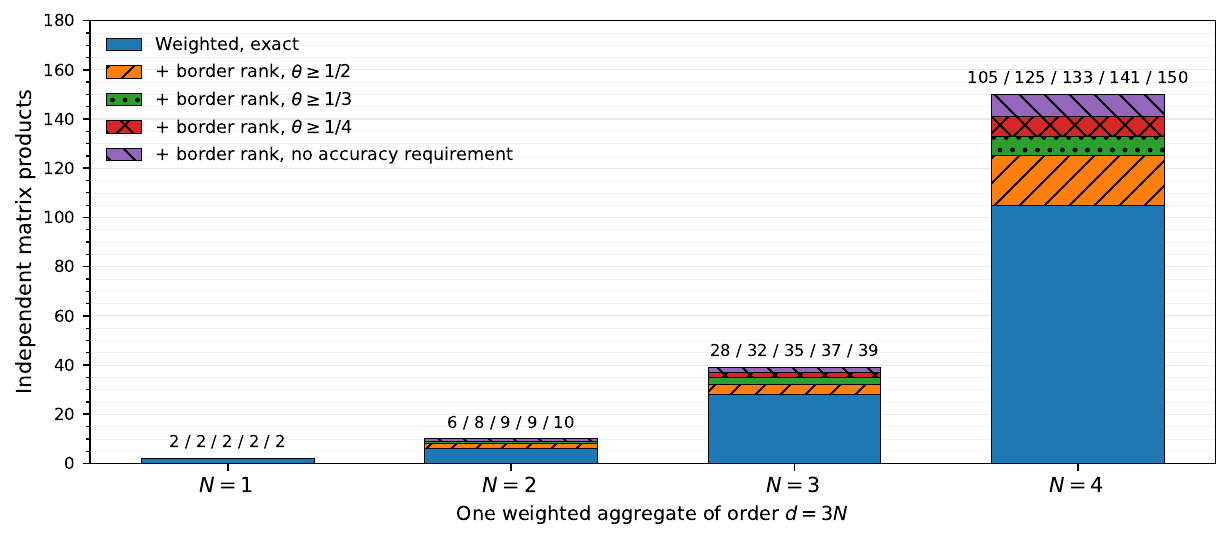}
\caption{Independent products accommodated by one centered, padded, and
weighted aggregate. The bottom segment is an exact conflict-free family. Successive
layers show the additional products admitted by allowing border-rank
gradings with retained fraction at least $1/2$, $1/3$, $1/4$, and no
accuracy requirement. Heights are cumulative product counts, not sums of
the capacities of separate constructions; the support may change between
levels. The five numbers above each bar give these cumulative counts in
order. ``No accuracy requirement'' means unrestricted border rank, not
exact computation or an assertion that the attained fraction is zero.
The cumulative bar heights are attained product counts, not upper bounds
or claims of optimality.}
\label{fig:weighted-capacity}
\end{figure}

For $N=4$, Figure~\ref{fig:weighted-capacity} shows $105$ exact products,
increasing to $125$, $133$, and $141$ at limiting retained fractions at
least $1/2$, $1/3$, and $1/4$, and to $150$ without an accuracy requirement.
These gains come from allowing parameter scaling to suppress mixed products
that an exact weighted construction must exclude.
All stronger-accuracy candidates remain eligible at weaker requirements,
so the cumulative capacities cannot decrease. Supports at successive levels
need not be nested. Search and verification use the procedures and budgets
in Section~\ref{sec:plot-models}.

\subsection{Comparison with prior matrix multiplication algorithms}
\label{sec:literature-comparison}

We now compare our finite constructions with selected prior single-product algorithms.
The baseline algorithms first reduce the multiplication count of a single
product and are then applied separately to each of the $p$ products. Strassen replaces eight $2\times2$ block products
by seven~\cite{strassen1969gaussian}; the exact Smirnov square base
combines three cyclic rectangular rank-$40$ constructions~\cite{smirnov2013bilinear}.
Pan's methods aggregate trilinear terms and cancel the unwanted
products~\cite{pan1982implicit}. Schwartz--Zwecher and LITA refine this
approach for a single square product~\cite{schwartz2025faster,khoruzhii2026lita}.
LITA has the lowest exact exponent among the bases in
Table~\ref{tab:plot-baselines}. Comparing against its padding-free
model tests whether aggregating independent matrix--matrix products improves on this
single-product recursion without benefiting from its padding overhead.

\begin{figure}[!htbp]
\centering
\includegraphics[width=\textwidth]{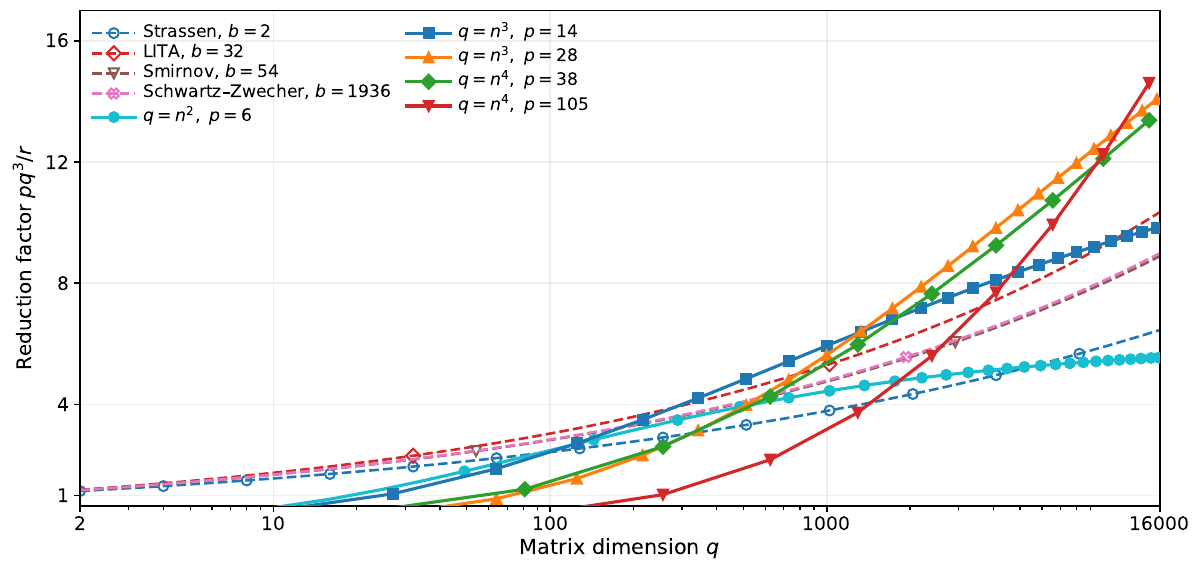}
\caption{Multiplication-count reduction factors for exact algorithms.
Each solid curve computes $p$ independent products of dimension $q=n^N$;
dashed curves apply the indicated prior base of dimension $b$ recursively.
Markers indicate dimensions $q=n^N$ with integer $n$, with the interpolation conventions of
Section~\ref{sec:plot-models}. The $p=14$ and $p=38$ families use centering
without corrections. The $p=6,28,105$ families include the corrections
counted in Table~\ref{tab:construction-parameters}. Larger corrected
families become advantageous as their correction terms decrease relative
to the leading aggregate.}
\label{fig:equalsize}
\label{fig:equalsize-exact}
\end{figure}

Figure~\ref{fig:equalsize-exact} shows that the largest support is not
uniformly preferable. For a correction-free family the reduction factor
is $p(n/(n+1))^{3N}$. At fixed $q$, increasing $N$ decreases $n$ and
increases this embedding overhead; corrections add a further cost.
The fourteen-product family is therefore competitive at smaller dimensions, while
the corrected twenty-eight-product family becomes preferable as its
lower-order terms decrease relative to the aggregate.
The same tradeoff delays the $105$-product family: it remains below the
$28$-product family at $q=4096$, but exceeds it at
$q=14641$.
Larger tensor powers become useful when the additional
outputs offset both embedding and correction costs.

The LITA, Strassen, and classical columns of
Table~\ref{tab:intro-speedups} distinguish aggregation beyond a strong
single-product algorithm from the total reduction relative to standard
multiplication. At $q=14641$, for exactly $105$ requested products, the smallest count
using the $N\le3$ families and the LITA model requires four
evaluations of the $28$-product construction and discards seven outputs. The $105$-product construction
reduces this count by a factor of approximately $1.123$, compared with
approximately $1.053$ for the count per output. Matching the number of requested independent matrix--matrix products can
therefore strengthen a relatively small per-product advantage.

\begin{figure}[!htbp]
\centering
\includegraphics[width=\textwidth]{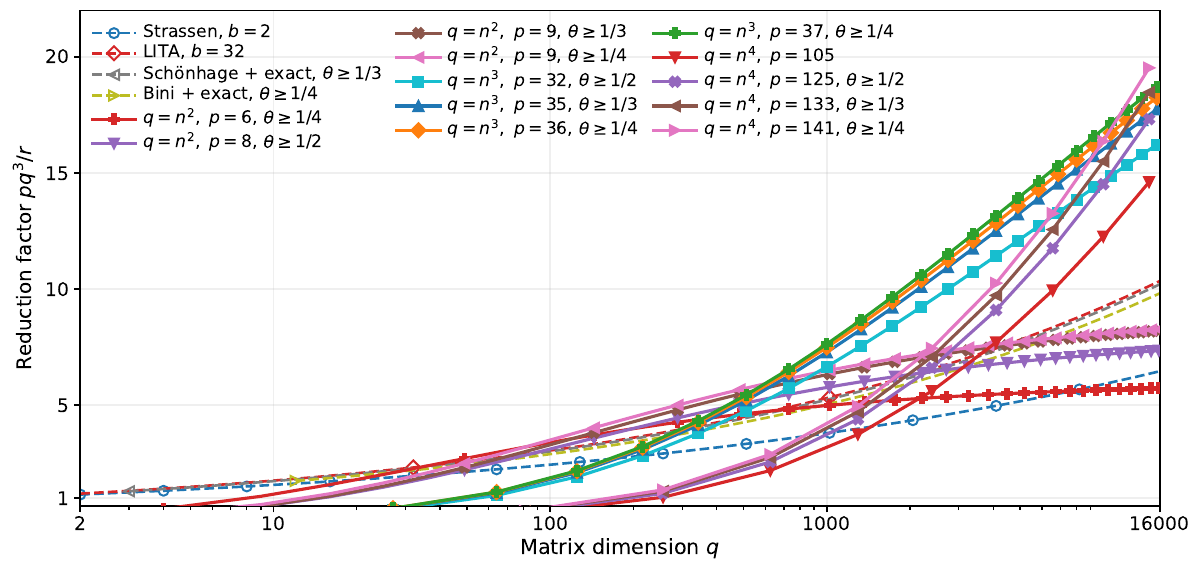}
\caption{Multiplication-count reduction factors with limiting fraction
of retained precision at least $1/4$. Labels distinguish the half-,
third-, and quarter-precision bounds; the exact $p=105$ family has
$\theta=1$. Solid curves are the centered constructions of this paper.
Dashed curves are exact prior algorithms or a single Sch\"onhage or cyclic
Bini approximation followed by exact algorithms for the remaining subproblems. These latter two constructions
have $\theta=1/3$ and $1/4$, respectively, as derived in
Appendix~\ref{sec:prior-stability}. The limiting fraction is relative to the classical error reference;
Section~\ref{sec:parameter-choice} gives the parameter rule and the
finite-precision comparison, including the approximation coefficients
and factor norms.
All counts use the conventions of Section~\ref{sec:plot-models}.}
\label{fig:equalsize-bounded}
\end{figure}

The accuracy-constrained comparison in Figure~\ref{fig:equalsize-bounded}
shows how parameter scaling changes the tradeoff. The smaller
$q=n^2$ constructions become useful before the larger families can
amortize their embedding costs. At larger dimensions, the $p=32,35,37$
and then $p=125,133,141$ families give the largest reductions at their
respective precision thresholds. As the polynomials in Section~\ref{sec:construction-parameters}
show, the benefit comes mainly from additional outputs rather than a
smaller aggregate: more mixed products can be suppressed with little
change to the correction budget. The half-, third-, and quarter-precision
choices therefore provide distinct product counts at comparable total cost.

In Figure~\ref{fig:equalsize-bounded}, the Bini and Sch\"onhage constructions use
one approximate step followed by exact subproblems to meet the accuracy
requirement. Neither improves on LITA alone under the count model used here.
Repeated approximate steps can reduce the count further but lose more
accuracy (Appendix~\ref{sec:prior-stability}). The new constructions instead
reduce the count per product by aggregating more independent matrix--matrix
products at a prescribed limiting retained fraction.

\begin{figure}[!htbp]
\centering
\includegraphics[width=\textwidth]{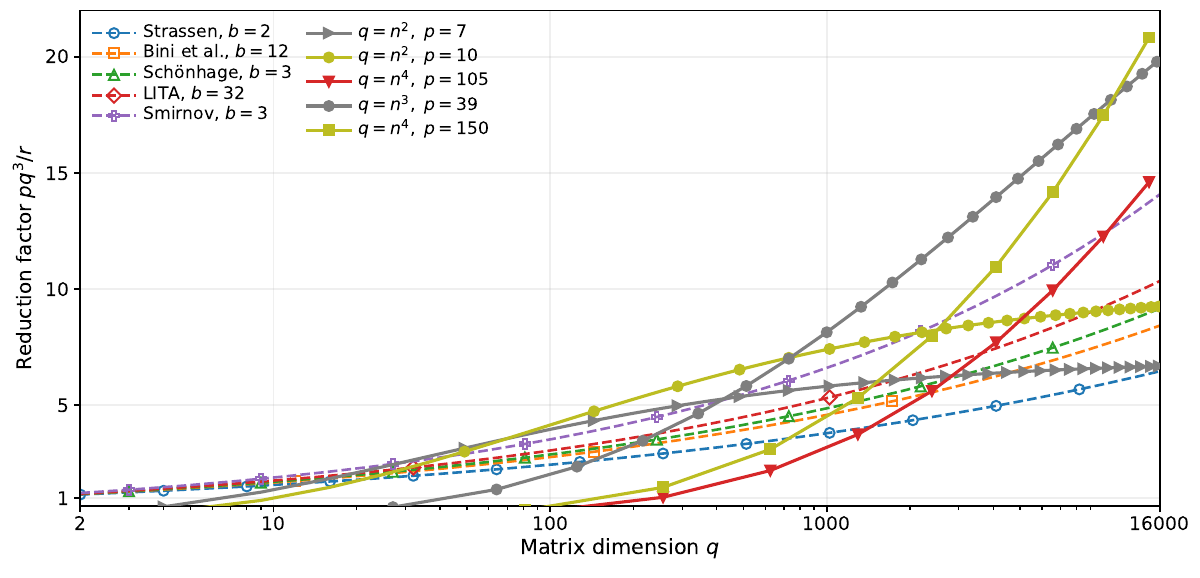}
\caption{Finite-size border-rank reduction factors without a precision
constraint. Prior algorithms are dashed and this paper's constructions
are solid. The $q=n^2$, $p=7,10$ families have limiting retained fractions
$1/16$ and $1/8$, respectively. The $p=39,150$ families have
limiting retained fractions $1/13$ and $1/12$. Smirnov's dashed curve
uses the rank-$20$ base of dimension three at unrestricted recursive
depth. The $p=105$ curve is exact. Construction parameters and multiplication-count comparisons are defined
in Section~\ref{sec:plot-models}, and Appendix~\ref{sec:prior-stability}
analyzes the parameter dependence of the prior approximations.}
\label{fig:equalsize-border}
\end{figure}

Without a precision constraint, the smaller supports in
Figure~\ref{fig:equalsize-border} can use much larger parameter exponents.
The seven-product family suppresses all remaining mixed products without
corrections; the ten-product family permits more outputs with a small
correction budget. Their improved counts at small dimensions come at the cost of a large
limiting loss of accuracy. This tradeoff is less favorable
at larger dimensions, where the $39$- and $150$-product families
overtake them at their respective accuracy levels.

Smirnov's rank-$20$ base gives the strongest prior unrestricted curve shown.
Its direct realization has limiting fraction $1/7$ for one level and
$1/(1+6k)$ for $k$ levels. By comparison, the $150$-product family reduces
this formal multiplication-count estimate by a factor of approximately $1.516$ at
$q=14641$, with $\theta=1/12$. At the stronger requirement
$\theta\ge1/4$, the $141$-product family gives a factor of approximately $1.421$.
Thus the new constructions appear promising in comparison with prior works in the multiplication count achieved both by exact constructions and border rank constructions under various accuracy constraints.

\paragraph{Unequal shapes.}
\label{sec:extensions}
For complete and incomplete tripartitions with unequal part sizes,
Tables~\ref{tab:mixed-complete} and~\ref{tab:bigcw} retain the matrix
shapes and product volumes. The complete ten- and thirteen-product
families have limiting retained fractions $1/2$ and $1/3$.
A twenty-two-product family gives an asymptotic-sum bound of approximately
$2.560147$ with limiting retained fraction at least $1/212$.
The incomplete $d=6,9$ families give bounds near $2.636$ and $2.639$.
These values follow by substituting the complete shape lists and rank
budgets into the asymptotic sum theorem.

\FloatBarrier
\begin{revision}
\section{Conclusion}
\label{sec:conclusion}

We provide explicit aggregation formulas for the simultaneous matrix products
obtained from fixed-shape restrictions of powers of the little
Coppersmith--Winograd tensor. Centering and weighting give a direct
realization of the $(n+2)^{3N}$ multiplication budget.
Explicit inclusion--exclusion corrections reduce
this budget, and further border-rank degenerations allow larger sets of
products to be aggregated. This connects the Coppersmith--Winograd approach
to Pan's trilinear aggregation and to bilinear algorithms for symmetric
tensor contraction.

These finite constructions improve on the multiplication counts of the
reference algorithms compared here for selected numbers and dimensions of
independent matrix products. The benefit is not a new best bound on
$\omega$, but explicit constructions for moderate matrix dimensions and
numbers of independent matrix--matrix products. 
Our error analysis quantifies error due to border-rank degenerations and finite-precision evaluation.
This analysis allows us to search for constructions with a prescribed limiting retained fraction of accuracy.

We expect the finite constructions, particularly the border-rank variants,
to be improvable. The searches are heuristic and use modest budgets, and their
objectives emphasize multiplication counts at prescribed limiting accuracy.
A practical formulation should also optimize finite-precision error constants
and implementation costs rather than rely on these asymptotic criteria alone.

An intriguing future direction is to consider the use of the introduced embeddings and the inclusion--exclusion principle to develop aggregations that allow extraction of non-independent matrix--matrix products.
Another open direction is to extend this approach to contractions of
tensor cliques, starting with the $K_4$ contractions studied
in~\cite{christandl2019graphtensors,christandl2019surgery}, and possibly to
other contraction graphs.

Further analysis of numerical stability should account for how the
embeddings and corrections are evaluated, alongside experimental
evaluation of both the exact and approximate algorithms. High-performance
implementations should examine encoding and decoding costs as well as
bilinear multiplication counts. Their communication cost, including data
movement within memory hierarchies and between processors, is another
important subject for study.

\section{Acknowledgments}

The work of Edgar Solomonik was supported by the U.S. Department of Energy,
Office of Science, Office of Advanced Scientific Computing Research, under
Award No.~DE-SC0023483, as part of Sparsitute: A Mathematical Institute for
Sparse Computations in Science and Engineering.

The initial idea of aggregation of tensor contraction was developed when both authors were participating as long-term visitors in the Fall 2025 program on Complexity and Linear Algebra at the Simons Institute for the Theory of Computing, University of California, Berkeley.
After the conclusion of that program, the first author used large language models to further advance and analyze the numerical algorithm, as well as establish connections to prior literature.
Gemini (Google) was initially used and helped establish the connection between Coppersmith and Winograd's construction and aggregation of tensor contractions.
The centering and weighting embeddings were developed with assistance from Gemini and ChatGPT (OpenAI).
The connection to Sierpinski's triangle was picked up from a ChatGPT exploration.
Both ChatGPT and Claude (Anthropic) helped develop and analyze centered corrections as well as border rank degenerations and their numerical stability.
ChatGPT was used to generate the final versions of the computational search algorithm and verification infrastructure, as well as to draft parts of the manuscript and edit text.
The authors iterated on, edited, and ultimately hand-wrote much of the manuscript; they also tested the verification infrastructure and assume responsibility for all contents of the manuscript and software artifacts.

\appendix

\section{Recursive Proof of the Equal-Size Asymptotic Sum Bound}
\label{apx:ast}
Assume that an exact rank-$r$ construction computes a direct sum of $m$
independent products $\langle n,n,n\rangle$, and that $m$ divides $r$.
Apply this construction to $n\times n$ block matrices whose blocks have size
$n^{k-1}\times n^{k-1}$.  The construction reduces the $m$ products of block
matrices to $r$ independent products of blocks.  Partition these $r$ block
products into $r/m$ groups of $m$ and compute each group recursively by the
same construction.  If $R_k$ denotes the resulting rank for $m$ independent
products of dimension $n^k$, then
\begin{equation}
R_1=r,
\qquad
R_k=\frac rm R_{k-1},
\end{equation}
and hence
\begin{equation}
R_k=m\left(\frac rm\right)^k.
\end{equation}
In particular, one product of dimension $n^k$ has rank at most $R_k$.  Taking
$k$th roots and passing to the limit gives
\begin{equation}
n^\omega\leq \frac rm,
\end{equation}
or equivalently $mn^\omega\leq r$.  If $m$ does not divide $r$, the same
limit follows by applying the divisible case to the $\ell$th tensor power
with rank bound $m^\ell\lceil(r/m)^\ell\rceil$, taking $\ell$th roots,
and letting $\ell\to\infty$.

\section{Prior matrix-multiplication algorithms}
\label{apx:baseline-details}
\subsection{Finite-base multiplication counts}
Table~\ref{tab:plot-baselines} lists the bases used in the comparison
catalogue. Their padding-free recursive estimates are defined in
\eqref{eq:prior-count-model}.

\begin{table}[htbp]
\centering\small
\begin{tabular}{lrrl}
\toprule
Construction & base dimension $b$ & count $r_b$ & type\\
\midrule

Strassen~\cite{strassen1969gaussian} & 2 & 7 & exact\\

Laderman~\cite{laderman1976noncommutative} & 3 & 23 & exact\\

Pan (1978)~\cite{pan1978strassen} & 70 & 143\,640 & exact\\

Pan (1982)~\cite{pan1982implicit} & 44 & 36\,133 & exact\\

Smirnov~\cite{smirnov2013bilinear} & 54 & 64\,000 & exact\\

Sedoglavic--Smirnov~\cite{sedoglavic2021five} & 12 & 1\,040 & exact\\

Schwartz--Zwecher~\cite{schwartz2025faster} & 44 & 36\,110 & exact\\

Schwartz--Zwecher~\cite{schwartz2025faster} & 1936 & 1\,303\,676\,064 & exact\\

Dumas--Pernet--Sedoglavic~\cite{dumas2025noncomplex} & 4 & 48 & exact\\

LITA~\cite{khoruzhii2026lita} & 32 & 14\,197 & exact\\

Bini et al.~\cite{bini1979n2} & 12 & 1\,000 & border rank\\

Smirnov~\cite{smirnov2013bilinear} & 3 & 20 & border rank\\

Sch\"onhage~\cite{schonhage1981partial} & 3 & 21 & border rank\\

\bottomrule
\end{tabular}
\caption{Prior square bases in the finite-size comparison.}
\label{tab:plot-baselines}
\end{table}
LITA uses the rational construction at the cited revision~\cite{khoruzhii2026lita}.
The size-$54$ Smirnov base is the cyclic tensor product of three
$\langle3,3,6\rangle$ rank-$40$ decompositions. The size-$12$ Bini base
is the cyclic tensor product of three $\langle3,2,2\rangle$ rank-$10$
approximations. The size-$1936$ Schwartz--Zwecher count is the improved
second-family bound, rather than the square of $36110$.

For rectangular products, Pan's aggregation identity generalizes to
\begin{align}
R\!\left(\langle m,k,n\rangle\oplus\langle k,n,m\rangle\right)
 &\le mkn+mk+kn+nm,\label{eq:intro-pan-pair-rank}\\
\underline R\!\left(\langle m,k,n\rangle\oplus\langle k,n,m\rangle\right)
 &\le mkn+mk+kn.\label{eq:intro-pan-pair-border}
\end{align}
The second bound follows by cyclically relabeling the dimensions in
\cite[Eq.~(3.1)]{pan1982implicit}.
The earlier exact one-product family used below is
\begin{equation}
P_{\rm Pan}(q)=\frac{q^3-4q}{3}+6q^2\qquad(q\text{ even})
\label{eq:pan-parametric-rank}
\end{equation}
\cite{pan1978strassen,pan2017neighbourhood}.

\subsection{Approximate steps with exact subproblems}
\label{sec:prior-stability}
The bilinear representations of Bini, Sch\"onhage, and Smirnov supplied
by Benson and Ballard~\cite{benson2015framework,bensonballard2015factors}
have first-order approximation error in $\lambda$.
If $A(\lambda)=\lambda^{-a}(A_0+O(\lambda))$ with $A_0\ne0$, then
$\|A(\lambda)\|_2=\lambda^{-a}(\|A_0\|_2+O(\lambda))$;
apply this to all three factors. The leading coefficients
$\sigma_0$ are as in Definition~\ref{def:leading-sigma}:
\[
\begin{array}{l|rrrr}
\text{formula} & r & h & \sigma_0 & \theta\\\hline
\text{Bini }\langle3,2,2\rangle &10&1&6+2\sqrt2&1/2\\
\text{Sch\"onhage }\langle3,3,3\rangle &21&2&5\sqrt7&1/3\\
\text{Smirnov }\langle3,3,3\rangle &20&6&2\sqrt{15(6+\sqrt5)}&1/7
\end{array}
\]
where $\sigma(\Xi(\lambda))=\sigma_0\lambda^{-h}(1+O(\lambda))$.
The constructions are due to~\cite{bini1979n2,schonhage1981partial,smirnov2013bilinear}.
In particular, the leading Smirnov factor norms are
$\sqrt{6+\sqrt5}\lambda^{-2}$, $\sqrt5\lambda^{-2}$, and
$\sqrt{12}\lambda^{-2}$, while its first residual coefficient is nonzero.

Spectral norms multiply under tensor products. Consequently, $k$ direct
levels have norm product $\sigma(\Xi(\lambda))^k$ and limiting fraction
$1/(1+kh)$. The cyclic product of three Bini rectangular steps gives
$b=12$, $r_b=1000$, $h=3$, and $\theta=1/4$.
One Sch\"onhage square step gives $\theta=1/3$.
An exact algorithm for the remaining subproblems contributes a $\lambda$-independent norm
product, preserving these limiting fractions. With its count modeled
by LITA's exponent $\omega_L=\log_{32}14197$, one approximate square
step has comparison curve
\begin{equation}
\xi\!\left(1,q,r_b(q/b)^{\omega_L}\right)=\frac{b^3}{r_b}(q/b)^{3-\omega_L},\qquad q\ge b.
\end{equation}
Its ratio to the LITA curve is $b^{\omega_L}/r_b$.
Changes of basis can alter the norm products at fixed
rank~\cite{ballard2016stability,ju2020convolution,dumas2025automated}.

\section{Further bounds for square-product constructions}
\label{apx:construction-bounds}
This appendix collects the weighted comparison, accuracy coefficients,
integer-dimension comparisons, and a shared-variable square-product construction.

\subsection{Weighted and centered comparison counts}
\label{apx:weighted-comparison}
For a complete fixed-shape family $\mathcal S$, apply
\eqref{eq:weighted-inclusion-exclusion} to the scaled blocks. By
\eqref{eq:general-correction-identity}, only the desired products and
$\mathcal B(\mathcal S)$ remain, giving
\begin{equation}
\Phi^{\rm wt}_{\mathcal S}(\lambda)
 =\phi_{\mathcal S}
   +\sum_{(i,j,k)\in\mathcal B(\mathcal S)}
      \lambda^{a_i+b_j+c_k}\phi_{ijk},
\qquad
R(\Phi^{\rm wt}_{\mathcal S}(\lambda))\le(n+2)^d.
\end{equation}
For a conflict-free family this is exact. Otherwise the positive powers
give a degeneration. Since the weighted embedding is independent of
$\lambda$, normalized exponents of height $h$ give approximation error of
order $\lambda$ and $\sigma(\Xi(\lambda))$ of
Definition~\ref{def:spectral-product} of order $\lambda^{-h}$.
For balanced parts, its exact ratio $\sigma(\Xi(\lambda))/\sigma_c$ is the upper-bound expression in
\eqref{eq:normalized-sigma-ratio} times $\lambda^{-h}$.
They retain limiting fraction at least $1/(1+h)$ by the balance in
Section~\ref{sec:stability}.
Thus weighting and explicit correction can be compared on the same
support with the same exponents.

The budget comparison follows from \eqref{eq:correction-vs-padding}.
The normalized saving is given by
\eqref{eq:weighted-paired-reduction}. Since weighting removes every
replicated term, its remaining mixed products form a subset of those
constrained by the corresponding corrected construction. With the same
grading it therefore has no smaller degree gap and no worse limiting
retained fraction. The capacity comparison optimizes over all saved
weighted supports at each requested fraction.

\subsection{Norm coefficients for the approximation families}
\label{apx:sigma-coefficients}
Table~\ref{tab:sigma-approximate} evaluates the leading coefficients in
\eqref{eq:leading-error-balance}. Definition~\ref{def:approximation-bound}
gives $\beta'(0)$ by grouping the remaining mixed products of degree one.
The spectral coefficient uses the blocks with the smallest exponent in
\eqref{eq:spectral-factor-norms}. Exact families satisfy the common
comparison~\eqref{eq:exact-sigma-ratio}.

\begin{table}[H]
\centering\small
\begin{tabular}{rrlcrr}
\toprule
$N$ & $p$ & Construction & $h$ & $\beta'(0)$ & $\sigma_0/\sigma_c$\\
\midrule
2 & 6 & \texttt{br6} & $3$ & $6(n+1)+3$ & $1.024$\\
2 & 7 & \texttt{br7} & $15$ & $8(n+1)+5$ & $1.024$\\
2 & 8 & \texttt{br8} & $1$ & $10(n+1)+2$ & $1.692$\\
2 & 9 & \texttt{br9} & $2$ & $8$ & $1.692$\\
2 & 9 & \texttt{br9q} & $\frac{8}{3}$ & $3(n+1)+6$ & $1.692$\\
2 & 10 & \texttt{br10} & $7$ & $3(n+1)+16$ & $1.692$\\
\addlinespace[2pt]
3 & 32 & \texttt{br32} & $1$ & $(n+1)^{3}+(n+1)^{2}+20$ & $4.310$\\
3 & 35 & \texttt{br35full} & $\frac{15}{8}$ & $32$ & $4.310$\\
3 & 35 & \texttt{br35} & $2$ & $15(n+1)^{2}+35$ & $4.310$\\
3 & 36 & \texttt{br36} & $3$ & $22(n+1)^{2}+40$ & $4.310$\\
3 & 37 & \texttt{br37} & $3$ & $15(n+1)^{2}+50$ & $4.310$\\
3 & 39 & \texttt{ub9} & $12$ & $(n+1)^{3}+(n+1)^{2}+4$ & $4.310$\\
\addlinespace[2pt]
4 & 125 & \texttt{br125} & $1$ & $(n+1)^{4}+21(n+1)^{3}+161$ & $14.828$\\
4 & 133 & \texttt{br133} & $2$ & $(n+1)^{4}+70(n+1)^{3}+163$ & $14.828$\\
4 & 141 & \texttt{br141} & $3$ & $(n+1)^{4}+36(n+1)^{3}+245$ & $14.828$\\
4 & 150 & \texttt{ub12} & $11$ & $10(n+1)^{3}+24$ & $14.702$\\
\bottomrule
\end{tabular}
\caption{Leading coefficients at $n=126,25,11$ for $N=2,3,4$, respectively.
The ratios $\sigma_0/\sigma_c$ are rounded to three decimal places.
Construction identifiers refer to the data in
Appendix~\ref{apx:construction-certificates}.}
\label{tab:sigma-approximate}
\end{table}

\subsection{Comparisons at integer dimensions}
The following comparisons use the same dimensions and number of independent
products, at dimensions where the comparison bases apply recursively without padding. For $n=12$, the nine-product construction \texttt{br9} has budget
$7{,}054{,}567$, compared with $9{,}000{,}000$ from two tensor levels
of the Bini square base for each product. The fourteen-product construction \texttt{ex14} has budget $13^9=10{,}604{,}499{,}373$, compared with $15{,}748{,}096{,}000$
from three tensor levels of the exact size-$12$ base for each product.

For a balanced family with $q=n^N$, the asymptotic-sum inequality is
\begin{equation}
p n^{N\omega}\le r(n),\qquad
\omega\le\omega_{\rm eff}(p,q,r(n)).
\label{eq:omega-asi}
\end{equation}
Appendix~\ref{apx:ast} gives a recursive argument for the exact case.

\subsection{A border-rank decomposition of the balanced little-CW cube}
\label{apx:N1br}
The balanced restriction of $\phi_{\mathrm{CW}}^{\otimes3}$ consists
of six matrix-multiplication forms sharing variable blocks; it is not a direct
sum of six independent products. The following decomposition reduces the
border-rank bound for this shared-variable tensor.
In a one-subscript entry of $X$, $Y$, or $Z$, the omitted subscript is summed over.
The restriction of the third tensor power of the little CW tensor to balanced tripartitions admits the following border-rank decomposition,
{\small
\begin{align}
\psi_{\mathrm{CW}}(X,Y,Z) = &\lim_{\lambda\to 0}
 \sum_{i,j,k}
(X^{(\{0\})}_{jk} +X^{(\{1\})}_{ik} +X^{(\{2\})}_{ij})
(Y^{(\{0\})}_{jk} +Y^{(\{1\})}_{ik} +Y^{(\{2\})}_{ij})
(Z^{(\{0\})}_{jk} +Z^{(\{1\})}_{ik} +Z^{(\{2\})}_{ij}) \\
-&\lambda^{-1}\sum_{j,k}
(X^{(\{0\})}_{jk} +\lambda X^{(\{1\})}_{k} +\lambda X^{(\{2\})}_{j})
(Y^{(\{0\})}_{jk} +\lambda Y^{(\{1\})}_{k} +\lambda Y^{(\{2\})}_{j})
(Z^{(\{0\})}_{jk} +\lambda Z^{(\{1\})}_{k} +\lambda Z^{(\{2\})}_{j})  \\
 -&\lambda^{-1}\sum_{i,k}
(\lambda X^{(\{0\})}_{k} +X^{(\{1\})}_{ik} +\lambda X^{(\{2\})}_{i})
(\lambda Y^{(\{0\})}_{k} +Y^{(\{1\})}_{ik} +\lambda Y^{(\{2\})}_{i})
(\lambda Z^{(\{0\})}_{k} +Z^{(\{1\})}_{ik} +\lambda Z^{(\{2\})}_{i}) \\
-& \lambda^{-1}\sum_{i,j}
(\lambda X^{(\{0\})}_{j} +\lambda X^{(\{1\})}_{i} +X^{(\{2\})}_{ij})
(\lambda Y^{(\{0\})}_{j} +\lambda Y^{(\{1\})}_{i} +Y^{(\{2\})}_{ij})
(\lambda Z^{(\{0\})}_{j} +\lambda Z^{(\{1\})}_{i} +Z^{(\{2\})}_{ij}) \\
-&(n-\lambda^{-1})\bigg[\sum_{j,k}
X^{(\{0\})}_{jk} Y^{(\{0\})}_{jk}Z^{(\{0\})}_{jk}
+\sum_{i,k}
X^{(\{1\})}_{ik} Y^{(\{1\})}_{ik}Z^{(\{1\})}_{ik}
+\sum_{i,j}
X^{(\{2\})}_{ij} Y^{(\{2\})}_{ij}Z^{(\{2\})}_{ij}\bigg].
\end{align}}
For every fixed $\lambda\neq0$, the displayed expression is a sum of
$n^3+6n^2$ products of three linear forms.  To verify the limit, classify a
monomial by the three superscripts selected from $\{0,1,2\}$.  If the three
superscripts are distinct, each of the three $\lambda^{-1}$ sums contains at
least two factors multiplied by $\lambda$, and the monomial survives with
coefficient one.  If exactly two superscripts agree, the sum corresponding to
the repeated superscript contains exactly one factor multiplied by $\lambda$
and cancels the monomial from the first line; the other two sums vanish in the
limit.  If all three superscripts equal $s$, the first line contributes $n$
times the diagonal product of the $s$-block, the corresponding
$\lambda^{-1}$ sum contributes $-\lambda^{-1}$ times that product, and the
last line contributes $-(n-\lambda^{-1})$ times it.  These contributions sum
to zero.  Hence precisely the six monomials with distinct superscripts remain,
which are the balanced restriction of $\phi_{\mathrm{CW}}^{\otimes3}$.
Thus
\begin{equation}
\brankop(\psi_{\mathrm{CW}})\leq n^3+6n^2.
\end{equation}
The third tensor power of the decomposition of $\phi_{\mathrm{CW}}$, or the
trilinear aggregation with centering, padding, and weighting, instead gives
$(n+2)^3=n^3+6n^2+12n+8$.

\section{Unequal-shape complete-tripartition constructions}
\label{apx:unequal-cw}

\begin{table}[H]
\centering
\small
\resizebox{\textwidth}{!}{\begin{tabular}{c|l|c|c|c|c|c}
products & shape counts & $h\le$ & $\theta\ge$
& correction counts & optimizing $n$ & $\omega_{\rm eff}$\\
\hline
10 & $6(2,2,2)+4(1,2,3)$ & 1 & $1/2$ & $(6,3)$ & 17 & 2.706\\
13 & $3(2,2,2)+10(1,2,3)$ & 2 & $1/3$ & $(6,6)$ & 14 & 2.660\\
22 & $8(2,2,2)+10(1,2,3)+4(1,1,4)$ & 211 & $1/212$ & $(6,12)$ & 11 & 2.560
\end{tabular}}
\caption{Complete-tripartition families of equal volume and unequal shapes.
A shape $(r,s,t)$ abbreviates all products whose ordered exponent triple is a
permutation of that triple. The height column gives an upper bound,
and the accuracy column its corresponding lower bound. The final column gives the asymptotic-sum bound on $\omega$, rounded to three decimal places.}
\label{tab:mixed-complete}
\end{table}

The correction counts determine each budget by
\eqref{eq:partial-correction-budget}. In particular, for the 22-product
family every product has volume $n^6$, so
\begin{equation}
22n^{2\omega}\le \rho_{22}(n)
 =(n+1)^6+6(n+1)^5+12(n+1)^4.
\label{eq:asi-22}
\end{equation}
\label{eq:rho-22}
The minimizing integer $n=11$ yields $\omega\le2.560147\ldots$;
its limiting retained fraction is at least $1/212$.

The full tripartition lists and exponents are in the auxiliary directory
\path{anc/constructions/}, in the files \path{general10.txt},
\path{general13.txt}, and \path{general22.txt}, respectively.

\section{Incomplete tripartitions of the big CW tensor}
\label{apx:incomplete-cw}
\subsection{Constructions and multiplication counts}
For the incomplete families in Table~\ref{tab:bigcw}, $m_g$ counts products of volume $n^g$, and the exponent bound is determined by
\begin{equation}
\sum_g m_g n^{\omega g/3}\le(n+2)^d.
\label{eq:incomplete-asi}
\end{equation}

\begin{table}[H]
\centering
\small
\resizebox{\textwidth}{!}{\begin{tabular}{c|c|l|c|c}
$d$ & products & counts by covered size & optimizing $n$ & $\omega_{\rm eff}$\\
\hline
6 & 51 & $m_6=15,\ m_5=2,\ m_4=4,\ m_3=18,\ m_2=8,\ m_1=1,\ m_0=3$ & 14 & 2.636\\
9 & 68 & $m_9=57,\ m_8=11$ & 14 & 2.639
\end{tabular}}
\caption{Conflict-free families of incomplete tripartitions.  Products have unequal
volumes and often unequal shapes; $m_s$ counts products with $s$ covered
coordinates. The exponent follows from
\eqref{eq:incomplete-asi} and is rounded to three decimal places.}
\label{tab:bigcw}
\end{table}

The constructions of Table~\ref{tab:bigcw} use the big-CW
restriction of Section~\ref{sec:cw-contractions}. Their border-rank bounds follow
from the restriction identity and the counts by covered size. Complete data
are in \texttt{inc6p51} and \texttt{inc9p68},
respectively, in the directory indexed by Appendix~\ref{apx:construction-certificates}.
These and the unequal-shape constructions of Table~\ref{tab:mixed-complete}
are tabulated comparisons, not curves in the figures.

\subsection{Norms of the incomplete big-CW restrictions}
\label{apx:incomplete-sigma}
For the rational degeneration defining $\phi_{\rm BCW}$ in Section~\ref{sec:fmm-background},
distribute the absolute scalar coefficient of each rank-one term
equally among its three factors. Put its sign in the output factor.
For $0<\lambda<1/n$, set $t=1-n\lambda$. Up to column signs, each factor has columns
\begin{equation}
\lambda^{-2/3}(e_0+\lambda e_i)\ (1\le i\le n),\qquad
\lambda^{-1}\left(e_0+\lambda^2\sum_{i=1}^n e_i\right),\qquad
t^{1/3}\lambda^{-1}e_0+\lambda^2t^{-2/3}e_{n+1}.
\label{eq:bigcw-balanced-columns}
\end{equation}
Summing their outer products and restricting to the span of $e_0$,
$n^{-1/2}\sum_{i=1}^n e_i$, and $e_{n+1}$ gives
\begin{equation}
H(\lambda)=
\begin{pmatrix}
n\lambda^{-4/3}+\lambda^{-2}(1+t^{2/3}) &
 \sqrt n(\lambda^{-1/3}+1) & \lambda t^{-1/3}\\
\sqrt n(\lambda^{-1/3}+1) & \lambda^{2/3}+n\lambda^2 & 0\\
\lambda t^{-1/3} &0&\lambda^4t^{-4/3}
\end{pmatrix}.
\end{equation}
The remaining $n-1$ directions in the middle block have eigenvalue
$\lambda^{2/3}$. A retained variable block has a word
$\alpha\in\{0,1,2\}^d$, determined by its scalar, active, and barred
coordinates of the variable blocks in Lemma~\ref{lem:bcw-power}.
For each variable space, form the matrix indexed by its retained words,
\begin{equation}
G_{\alpha\beta}(\lambda)=\prod_{j=0}^{d-1}H_{\alpha_j\beta_j}(\lambda).
\end{equation}
The factor's squared spectral norm is $\lambda_{\max}(G)$.
Indeed, decomposing each middle coordinate into its sum and orthogonal
complement splits the Gram matrix into invariant subspaces.
Each other invariant block is entrywise bounded by a principal submatrix
of $G$: an orthogonal middle-coordinate direction replaces $H_{11}$ by
$\lambda^{2/3}\le H_{11}$. All entries are nonnegative, so monotonicity
of the spectral radius and the principal-submatrix bound give no larger
eigenvalue.
The product of the three square roots gives $\sigma(\Xi(\lambda))$ without
forming the tensor-power factors.

For the two incomplete families of Table~\ref{tab:bigcw}, this yields
\[
\begin{array}{rr|l}
d&p& \sigma(\Xi(\lambda))\text{ to leading order}\\\hline
6&51&64\sqrt2\lambda^{-34/3}\\
9&68&512\lambda^{-15}
\end{array}
\]
These values use the displayed rational big-CW realization and its
original parameter, with the scalar weights balanced as above.

\section{Construction data}
\label{apx:construction-certificates}
\label{apx:accuracy-certificates}
The indices below locate the supplied data for the numerical results.
Each construction has a readable file \texttt{<identifier>.txt} in
\path{anc/constructions/}, listing its tripartitions and exponents or the
corresponding incomplete tripartitions. The correction rule is given in the
file, with explicit correction sets when required. Table~\ref{tab:construction-index} locates all centered
constructions used in the figures or discussed in Section~\ref{sec:construction-parameters}.
Table~\ref{tab:weighted-construction-index} separately locates the weighted
constructions used in Figure~\ref{fig:weighted-capacity}; their supports and
gradings need not agree with centered constructions having the same $N,p$.
The machine-readable index is \path{anc/auxiliary/construction_usage.csv}.

\begingroup\small
\begin{longtable}{rrcll}
\caption{Data files and figure appearances of the centered square-product
constructions. The first twenty rows are the plotted families in
Table~\ref{tab:construction-parameters}; the last is the full-correction
variant discussed in Section~\ref{sec:construction-parameters}. A dash means
that the construction is not drawn in a figure.}
\label{tab:construction-index}\\
\toprule
$N$ & $p$ & $\theta$ & File identifier & Figures\\\midrule\endfirsthead
\toprule
$N$ & $p$ & $\theta$ & File identifier & Figures\\\midrule\endhead
\bottomrule\endfoot
2 & 6 & $1$ & \texttt{ex6} & \ref{fig:embedding-savings}, \ref{fig:equalsize-exact}\\
2 & 6 & $1/4$ & \texttt{br6} & \ref{fig:embedding-savings}, \ref{fig:equalsize-bounded}\\
2 & 7 & $1/16$ & \texttt{br7} & \ref{fig:embedding-savings}, \ref{fig:equalsize-border}\\
2 & 8 & $1/2$ & \texttt{br8} & \ref{fig:embedding-savings}, \ref{fig:equalsize-bounded}\\
2 & 9 & $1/3$ & \texttt{br9} & \ref{fig:embedding-savings}, \ref{fig:equalsize-bounded}\\
2 & 9 & $3/11$ & \texttt{br9q} & \ref{fig:embedding-savings}, \ref{fig:equalsize-bounded}\\
2 & 10 & $1/8$ & \texttt{br10} & \ref{fig:embedding-savings}, \ref{fig:equalsize-border}\\
3 & 14 & $1$ & \texttt{ex14} & \ref{fig:embedding-savings}, \ref{fig:equalsize-exact}\\
3 & 28 & $1$ & \texttt{ex28} & \ref{fig:embedding-savings}, \ref{fig:equalsize-exact}\\
3 & 32 & $1/2$ & \texttt{br32} & \ref{fig:embedding-savings}, \ref{fig:equalsize-bounded}\\
3 & 35 & $1/3$ & \texttt{br35} & \ref{fig:embedding-savings}, \ref{fig:equalsize-bounded}\\
3 & 36 & $1/4$ & \texttt{br36} & \ref{fig:embedding-savings}, \ref{fig:equalsize-bounded}\\
3 & 37 & $1/4$ & \texttt{br37} & \ref{fig:embedding-savings}, \ref{fig:equalsize-bounded}\\
3 & 39 & $1/13$ & \texttt{ub9} & \ref{fig:embedding-savings}, \ref{fig:equalsize-border}\\
4 & 38 & $1$ & \texttt{ex38} & \ref{fig:embedding-savings}, \ref{fig:equalsize-exact}\\
4 & 105 & $1$ & \texttt{ex105} & \ref{fig:embedding-savings}, \ref{fig:equalsize-exact}, \ref{fig:equalsize-bounded}, \ref{fig:equalsize-border}\\
4 & 125 & $1/2$ & \texttt{br125} & \ref{fig:embedding-savings}, \ref{fig:equalsize-bounded}\\
4 & 133 & $1/3$ & \texttt{br133} & \ref{fig:embedding-savings}, \ref{fig:equalsize-bounded}\\
4 & 141 & $1/4$ & \texttt{br141} & \ref{fig:embedding-savings}, \ref{fig:equalsize-bounded}\\
4 & 150 & $1/12$ & \texttt{ub12} & \ref{fig:embedding-savings}, \ref{fig:equalsize-border}\\
\midrule
3 & 35 & $8/23$ & \texttt{br35full} & --\\
\end{longtable}\endgroup

\begingroup\small
\begin{longtable}{rrll}
\caption{Data files for every cumulative level of Figure~\ref{fig:weighted-capacity}.
Each row uses the weighted multiplication budget $(n+2)^{3N}$. The accuracy
column gives the plotted requirement, not necessarily the larger fraction
attained by the stored grading. ``None'' places no accuracy restriction;
``all'' means that the same exact construction supplies every level.}
\label{tab:weighted-construction-index}\\
\toprule
$N$ & $p$ & Accuracy requirement & File identifier\\\midrule\endfirsthead
\toprule
$N$ & $p$ & Accuracy requirement & File identifier\\\midrule\endhead
\bottomrule\endfoot
1 & 2 & all & \texttt{w1e}\\
2 & 6 & exact & \texttt{w2e}\\
2 & 8 & $1/2$ & \texttt{w2h}\\
2 & 9 & $1/3$, $1/4$ & \texttt{w2t}\\
2 & 10 & none & \texttt{w2u}\\
3 & 28 & exact & \texttt{w3e}\\
3 & 32 & $1/2$ & \texttt{w3h}\\
3 & 35 & $1/3$ & \texttt{w3t}\\
3 & 37 & $1/4$ & \texttt{w3q}\\
3 & 39 & none & \texttt{w3u}\\
4 & 105 & exact & \texttt{w4e}\\
4 & 125 & $1/2$ & \texttt{w4h}\\
4 & 133 & $1/3$ & \texttt{w4t}\\
4 & 141 & $1/4$ & \texttt{w4q}\\
4 & 150 & none & \texttt{w4u}\\
\end{longtable}\endgroup

The unequal-shape constructions of Table~\ref{tab:mixed-complete} are
\texttt{general10}, \texttt{general13}, and \texttt{general22}; the incomplete
constructions of Table~\ref{tab:bigcw} are \texttt{inc6p51} and
\texttt{inc9p68}, in the displayed order. These constructions are discussed
in Section~\ref{sec:extensions} but are not drawn in figures.

\end{revision}

\bibliographystyle{abbrv}
\bibliography{main}

@article{solomonik2021fast,
  title={Fast bilinear algorithms for symmetric tensor contractions},
  author={Solomonik, Edgar and Demmel, James},
  journal={Computational Methods in Applied Mathematics},
  volume={21},
  number={1},
  pages={211--231},
  year={2021},
  publisher={De Gruyter}
}

@article{coppersmith1990matrix,
  title={Matrix multiplication via arithmetic progressions},
  author={Coppersmith, Don and Winograd, Shmuel},
  journal={Journal of Symbolic Computation},
  volume={9},
  number={3},
  pages={251--280},
  year={1990},
  publisher={Academic Press, Inc. Orlando, FL, USA}
}

@article{blaser2013fast,
  title={Fast matrix multiplication},
  author={Bl{\"a}ser, Markus},
  journal={Theory of Computing},
  pages={1--60},
  year={2013},
  publisher={Theory of Computing Exchange}
}

@article{schonhage1981partial,
  author = {Arnold Sch{\"o}nhage},
  title = {Partial and Total Matrix Multiplication},
  journal = {SIAM Journal on Computing},
  volume = {10},
  number = {3},
  pages = {434--455},
  year = {1981}
}

@article{salemspencer1942,
  title   = {On sets of integers which contain no three terms in arithmetical progression},
  author  = {Salem, Rapha{\"e}l and Spencer, Donald C.},
  journal = {Proceedings of the National Academy of Sciences of the United States of America},
  volume  = {28},
  number  = {12},
  pages   = {561--563},
  year    = {1942},
  doi     = {10.1073/pnas.28.12.561}
}

@article{conner2021:cc,
  author  = {Conner, Austin and Gesmundo, Fulvio and Landsberg, Joseph M. and Ventura, Emanuele},
  title   = {Rank and border rank of {K}ronecker powers of tensors and {S}trassen's laser method},
  journal = {computational complexity},
  volume  = {31},
  number  = {1},
  pages   = {1},
  year    = {2022},
  doi     = {10.1007/s00037-021-00217-y}
}

@article{lancaster1965helmert,
  author  = {Lancaster, H. O.},
  title   = {The {Helmert} Matrices},
  journal = {The American Mathematical Monthly},
  volume  = {72},
  number  = {1},
  pages   = {4--12},
  year    = {1965},
  doi     = {10.1080/00029890.1965.11970483}
}

@article{strassen1969gaussian,
   author  = {Strassen, Volker},
   title   = {Gaussian elimination is not optimal},
   journal = {Numerische Mathematik},
   volume  = {13},
   number  = {4},
   pages   = {354--356},
   year    = {1969}
 }

@inproceedings{pan1978strassen,
   author    = {Pan, V. Ya.},
   title     = {Strassen's algorithm is not optimal: Trilinear
                technique of aggregating, uniting and canceling for
                constructing fast algorithms for matrix operations},
   booktitle = {Proceedings of the 19th Annual Symposium on
                Foundations of Computer Science},
   pages     = {166--176},
   year      = {1978}
 }

@article{laderman1976noncommutative,
  author  = {Laderman, Julian D.},
  title   = {A noncommutative algorithm for multiplying $3\times3$ matrices using 23 multiplications},
  journal = {Bulletin of the American Mathematical Society},
  volume  = {82},
  number  = {1},
  pages   = {126--128},
  year    = {1976}
}

@article{bini1979n2,
  author  = {Bini, Dario and Capovani, Milvio and Romani, Francesco and Lotti, Grazia},
  title   = {$O(n^{2.7799})$ complexity for $n\times n$ approximate matrix multiplication},
  journal = {Information Processing Letters},
  volume  = {8},
  number  = {5},
  pages   = {234--235},
  year    = {1979}
}

@article{smirnov2013bilinear,
  author  = {Smirnov, A. V.},
  title   = {The bilinear complexity and practical algorithms for matrix multiplication},
  journal = {Computational Mathematics and Mathematical Physics},
  volume  = {53},
  number  = {12},
  pages   = {1781--1795},
  year    = {2013}
}

@article{christandl2019graphtensors,
  title         = {Asymptotic tensor rank of graph tensors: beyond matrix multiplication},
  author        = {Christandl, Matthias and Vrana, P{\'e}ter and Zuiddam, Jeroen},
  journal       = {computational complexity},
  volume        = {28},
  number        = {1},
  pages         = {57--111},
  year          = {2019},
  doi           = {10.1007/s00037-018-0172-8},
  eprint        = {1609.07476},
  archivePrefix = {arXiv}
}

@article{strassen1991degeneration,
  author  = {Strassen, Volker},
  title   = {Degeneration and complexity of bilinear maps: {S}ome asymptotic spectra},
  journal = {Journal f{\"u}r die reine und angewandte Mathematik},
  volume  = {413},
  pages   = {127--180},
  year    = {1991}
}

@article{christandl2019surgery,
  title         = {Tensor surgery and tensor rank},
  author        = {Christandl, Matthias and Zuiddam, Jeroen},
  journal       = {computational complexity},
  volume        = {28},
  number        = {1},
  pages         = {27--56},
  year          = {2019},
  doi           = {10.1007/s00037-018-0164-8},
  eprint        = {1606.04085},
  archivePrefix = {arXiv}
}

@article{abdelfattah2021batched,
  author  = {Ahmad Abdelfattah and Timothy Costa and Jack Dongarra and Mark Gates and Azzam Haidar and Sven Hammarling and Nicholas J. Higham and Jakub Kurzak and Piotr Luszczek and Stanimire Tomov and Mawussi Zounon},
  title   = {A Set of Batched Basic Linear Algebra Subprograms and {LAPACK} Routines},
  journal = {ACM Transactions on Mathematical Software},
  volume  = {47},
  number  = {3},
  pages   = {21:1--21:23},
  year    = {2021},
  doi     = {10.1145/3431921}
}

@article{rupniewski2022directsum,
  author        = {Filip Rupniewski},
  title         = {Tensor Rank of the Direct Sum of Two Copies of $2\times2$ Matrix Multiplication Tensor Is 14},
  journal       = {arXiv preprint arXiv:2209.11040},
  year          = {2022},
  eprint        = {2209.11040},
  archivePrefix = {arXiv}
}

@article{bini1980approximate,
  author  = {Dario Bini and Grazia Lotti and Francesco Romani},
  title   = {Approximate Solutions for the Bilinear Form Computational Problem},
  journal = {SIAM Journal on Computing},
  volume  = {9},
  number  = {4},
  pages   = {692--697},
  year    = {1980},
  doi     = {10.1137/0209053}
}

@article{demmel2007stable,
  author  = {James Demmel and Ioana Dumitriu and Olga Holtz and Robert Kleinberg},
  title   = {Fast Matrix Multiplication Is Stable},
  journal = {Numerische Mathematik},
  volume  = {106},
  number  = {2},
  pages   = {199--224},
  year    = {2007},
  doi     = {10.1007/s00211-007-0061-6}
}

@article{shitov2019directsum,
  author  = {Yaroslav Shitov},
  title   = {Counterexamples to {Strassen}'s Direct Sum Conjecture},
  journal = {Acta Mathematica},
  volume  = {222},
  number  = {2},
  pages   = {363--379},
  year    = {2019},
  doi     = {10.4310/ACTA.2019.v222.n2.a3}
}

@article{pan1982implicit,
  author  = {Victor Ya. Pan},
  title   = {Trilinear Aggregating with Implicit Canceling for a New Acceleration of Matrix Multiplication},
  journal = {Computers \& Mathematics with Applications},
  volume  = {8},
  number  = {1},
  pages   = {23--34},
  year    = {1982},
  doi     = {10.1016/0898-1221(82)90037-2}
}

@misc{schwartz2025faster,
  author        = {Oded Schwartz and Eyal Zwecher},
  title         = {Towards Faster Feasible Matrix Multiplication by Trilinear Aggregation},
  year          = {2025},
  eprint        = {2508.01748},
  archivePrefix = {arXiv},
  primaryClass  = {cs.DS}
}

@inproceedings{benson2015framework,
  author    = {Austin R. Benson and Grey Ballard},
  title     = {A Framework for Practical Parallel Fast Matrix Multiplication},
  booktitle = {Proceedings of the 20th ACM SIGPLAN Symposium on Principles and Practice of Parallel Programming},
  pages     = {42--53},
  year      = {2015},
  doi       = {10.1145/2688500.2688513}
}

@misc{dumas2025automated,
  author        = {Jean-Guillaume Dumas and Cl\'ement Pernet and Alexandre Sedoglavic},
  title         = {Towards Automated Generation of Fast and Accurate Algorithms for Recursive Matrix Multiplication},
  year          = {2025},
  eprint        = {2506.19405},
  archivePrefix = {arXiv},
  primaryClass  = {cs.NA}
}

@article{ballard2016stability,
  author  = {Grey Ballard and Austin R. Benson and Alex Druinsky and Benjamin Lipshitz and Oded Schwartz},
  title   = {Improving the Numerical Stability of Fast Matrix Multiplication},
  journal = {SIAM Journal on Matrix Analysis and Applications},
  volume  = {37},
  number  = {4},
  pages   = {1382--1418},
  year    = {2016},
  doi     = {10.1137/15M1032168}
}

@article{pan2017neighbourhood,
  author  = {Victor Ya. Pan},
  title   = {Fast Matrix Multiplication and Its Algebraic Neighbourhood},
  journal = {Sbornik: Mathematics},
  volume  = {208},
  number  = {11},
  pages   = {1661--1704},
  year    = {2017},
  doi     = {10.1070/SM8833}
}

@article{borstnik2014sparse,
  author = {Urban Bor{\v s}tnik and Joost VandeVondele and Val{\'e}ry Weber and J{\"u}rg Hutter},
  title = {Sparse matrix multiplication: The distributed block-compressed sparse row library},
  journal = {Parallel Computing},
  volume = {40},
  number = {5--6},
  pages = {47--58},
  year = {2014},
  doi = {10.1016/j.parco.2014.03.012}
}

@misc{dumas2025noncomplex,
  author        = {Jean-Guillaume Dumas and Cl\'ement Pernet and Alexandre Sedoglavic},
  title         = {A non-commutative algorithm for multiplying $4\times4$ matrices using 48 non-complex multiplications},
  year          = {2025},
  eprint        = {2506.13242},
  archivePrefix = {arXiv},
  primaryClass  = {cs.SC},
  howpublished  = {arXiv:2506.13242},
}

@misc{khoruzhii2026lita,
  author = {Kirill Khoruzhii and Luzian Serafin and Patrick Gel{\ss} and Sebastian Pokutta},
  title  = {Local Improvements to Trilinear Aggregation},
  year   = {2026},
  howpublished = {\url{https://github.com/khoruzhii/lita}},
  note   = {Rational schemes and software, commit \texttt{c1dd9225df98}, accessed 21 September 2026; associated manuscript in preparation}
}

@misc{sedoglavic2021five,
  author        = {Alexandre Sedoglavic and Alexey V. Smirnov},
  title         = {The tensor rank of $5\times5$ matrices multiplication is bounded by 98 and its border rank by 89},
  year          = {2021},
  eprint        = {2101.12568},
  archivePrefix = {arXiv},
  primaryClass  = {cs.CC},
  howpublished  = {arXiv:2101.12568},
}

@misc{bensonballard2015factors,
  author = {Austin R. Benson and Grey Ballard},
  title = {Fast Matrix Multiplication: Algorithm Coefficient Files},
  year = {2015},
  howpublished = {\url{https://github.com/arbenson/fast-matmul}},
  note = {Coefficient files at commit \texttt{132358be7a6a}; Bini, Sch\"onhage, and Smirnov approximate formulas}
}

@article{bini1980stability,
  author = {Bini, Dario and Lotti, Grazia},
  title = {Stability of Fast Algorithms for Matrix Multiplication},
  journal = {Numerische Mathematik}, volume = {36}, number = {1},
  pages = {63--72}, year = {1980}, doi = {10.1007/BF01395989}
}

@article{dai2023nuclear,
  author = {Dai, Zhen and Lim, Lek-Heng},
  title = {Numerical Stability and Tensor Nuclear Norm},
  journal = {Numerische Mathematik}, volume = {155}, number = {3--4},
  pages = {345--376}, year = {2023}, doi = {10.1007/s00211-023-01377-5}
}

@article{ju2020convolution,
  author = {Caleb Ju and Edgar Solomonik},
  title = {Derivation and Analysis of Fast Bilinear Algorithms for Convolution},
  journal = {SIAM Review},
  volume = {62}, number = {4}, pages = {743--777},
  year = {2020}, doi = {10.1137/19M1301059}
}

@article{alder1981associative,
  author  = {Alder, A. and Strassen, Volker},
  title   = {On the algorithmic complexity of associative algebras},
  journal = {Theoretical Computer Science},
  volume  = {15},
  number  = {2},
  pages   = {201--211},
  year    = {1981},
  doi     = {10.1016/0304-3975(81)90070-0}
}

@article{flavi2025symmetric,
  author  = {Flavi, Cosimo and Jelisiejew, Joachim and Micha{\l}ek, Mateusz},
  title   = {Symmetric Powers: Structure, Smoothability, and Applications},
  journal = {International Mathematics Research Notices},
  volume  = {2025},
  number  = {18},
  year    = {2025},
  doi     = {10.1093/imrn/rnaf277}
}

@inproceedings{alman2018further,
  author    = {Alman, Josh and Vassilevska Williams, Virginia},
  title     = {Further Limitations of the Known Approaches for Matrix Multiplication},
  booktitle = {9th Innovations in Theoretical Computer Science Conference (ITCS 2018)},
  series    = {Leibniz International Proceedings in Informatics (LIPIcs)},
  volume    = {94},
  pages     = {25:1--25:15},
  editor    = {Karlin, Anna R.},
  publisher = {Schloss Dagstuhl -- Leibniz-Zentrum f{\"u}r Informatik},
  year      = {2018},
  doi       = {10.4230/LIPIcs.ITCS.2018.25}
}
\end{document}